 \newcommand{\nm}[1]{\left\lVert {#1} \right\rVert}
 \newcommand{\dual}[1]{\left\langle {#1} \right\rangle}
\journalname{}
\begin{document}

\title{Scaled Proximal Gradient Methods for Multiobjective Optimization: Improved Linear Convergence and Nesterov's Acceleration}


\author{Jian Chen \and Liping Tang \and  Xinmin Yang  }

\institute{J. Chen \at National  Center  for  Applied  Mathematics in Chongqing, Chongqing Normal University, Chongqing 401331, China, and School of Mathematical Sciences, University of Electronic Science and Technology of China, Chengdu, Sichuan 611731, China\\
                    chenjian\_math@163.com\\
                   L.P. Tang \at National Center for Applied Mathematics in Chongqing, and School of Mathematical Sciences,  Chongqing Normal University, Chongqing 401331, China\\
                   tanglipings@163.com\\
        \Letter X.M. Yang \at National Center for Applied Mathematics in Chongqing,  Chongqing Normal University, Chongqing 401331, China\\
        xmyang@cqnu.edu.cn  \\}

\date{Received: date / Accepted: date}

\maketitle

\begin{abstract}
Over the past two decades, descent methods have received substantial attention within the multiobjective optimization field. Nonetheless, both theoretical analyses and empirical evidence reveal that existing first-order methods for multiobjective optimization converge slowly, even for well-conditioned problems, due to the objective imbalances. To address this limitation, we incorporate curvature information to scale each objective within the direction-finding subproblem, introducing a scaled proximal gradient method for multiobjective optimization (SPGMO). We demonstrate that the proposed method achieves improved linear convergence, exhibiting rapid convergence in well-conditioned scenarios. Furthermore, by applying small scaling to linear objectives, we prove that the SPGMO attains improved linear convergence for problems with multiple linear objectives. Additionally, integrating Nesterov's acceleration technique further enhances the linear convergence of SPGMO. To the best of our knowledge, this advancement in linear convergence is the first theoretical result that directly addresses objective imbalances in multiobjective first-order methods. Finally, we provide numerical experiments to validate the efficiency of the proposed methods and confirm the theoretical findings.

\keywords{Multiobjective optimization \and Proximal gradient method  \and Linear convergence \and Acceleration}
\subclass{90C29 \and 90C30}
\end{abstract}

\section{Introduction}
In multiobjective optimization, the primary goal is to optimize multiple objective functions simultaneously. Generally, it is not feasible to identify a single solution that simultaneously achieves optimal values for all objectives. Consequently, the notion of optimality is defined by {\it Pareto optimality} or {\it efficiency}. A solution is  Pareto optimal or efficient if no objective can be improved without sacrificing the others. As society and the economy progress, the applications of this type of problem have proliferated across a multitude of domains, such as engineering \cite{MA2004}, economics \cite{FW2014,TC2007}, management science \cite{E1984}, and machine learning \cite{SK2018}. 
\par  Solution strategies are pivotal in applications involving multiobjective optimization problems (MOPs). Over the past two decades, multiobjective gradient descent methods have gained escalating attention within the multiobjective optimization community, as they provide common descent directions for all objectives without requiring predefined parameters. Besides, this type of method enjoys an attractive property, as pointed out by Attouch et al. \cite{AGG2015}, in fields like game theory, economics, social science, and management: {\it it improves each of the objective functions}. As far as we know, the study of multiobjective gradient descent methods can be traced back to the pioneering works by Mukai \cite{M1980} and Fliege and Svaiter \cite{FS2000}. The latter elucidated that the multiobjective steepest descent direction reduces to the steepest descent direction when dealing with a single objective. This observation inspired researchers to extend ordinary numerical algorithms for solving MOPs (see, e.g., \cite{AP2021,BI2005,CL2016,FD2009,FV2016,GI2004,LP2018,MP2018,MP2019,P2014,QG2011} and references therein). 
\par Although multiobjective gradient descent methods are derived from their single-objective counterparts, a theoretical gap remains between the two types of approaches. Recently, Zeng et al. \cite{ZDH2019} and Fliege et al. \cite{FVV2019} established a linear convergence rate $\mathcal{O}(r^{k})$ of the steepest descent method for MOPs (SDMO), where $r=\sqrt{1-{\mu_{\min}}/{L_{\max}}}$, $\mu_{\min}:=\min\{\mu_{i}:i=1,2,\cdots,m\}$ and $L_{\max}:=\max\{L_{i}:i=1,2,\cdots,m\}$, with $\mu_{\min}$ and $L_{i}$ representing the strongly convex and smooth parameters, respectively. Tanabe et al. \cite{TFY2023} obtained a similar result for the proximal gradient method for MOPs (PGMO) \cite{TFY2019}. Notably, when minimizing a $\mu$-strongly convex and $L$-smooth function using the vanilla gradient method, the rate of convergence in terms of $\{\|x^{k}-x^{*}\|\}$ is $\sqrt{1-{\mu}/{L}}$. This highlights that {\it objective imbalances}, stemming from the substantially distinct curvature information of different objective functions, can result in a small value of ${\mu_{\min}}/{L_{\max}}$. Remarkably, even if each of the objective functions is not ill-conditioned (a relatively small ${L_{i}}/{\mu_{i}}$), the {\it imbalanced condition number} ${L_{\max}}/{\mu_{\min}}$ can be tremendous. This theoretical gap between first-order methods for single-objective optimization problems (SOPs) and MOPs elucidates why each objective is relatively easy to optimize individually but challenging when attempting to optimize them simultaneously. We emphasize that the objective imbalances are intrinsic to MOPs, particularly in large-scale and real-world scenarios, imposing significant challenges for solving MOPs via existing first-order methods \cite{FS2000,GI2004,LP2018,TFY2019,TFY2022}. This raises a critical question: How can the theoretical gap between first-order methods for SOPs and MOPs be bridged?
\par In this paper, we consider the question with the following generic model of unconstrained multiobjective composite optimization problem:
\begin{align*}
	\min\limits_{x\in\mathbb{R}^{n}} F(x), \tag{MCOP}\label{MCOP}
\end{align*}
where $F:\mathbb{R}^{n}\rightarrow(\mathbb{R}\cup+\{\infty\})^{m}$ is a vector-valued function. Each component $F_{i}$, $i=1,2,\cdots,m$, is defined by
$$F_{i}:=f_{i}+g_{i},$$ 
with convex and continuously differentiable function $f_{i}:\mathbb{R}^{n}\rightarrow\mathbb{R}$ and proper convex and lower semicontinuous but not necessarily differentiable function $g_{i}:\mathbb{R}^{n}\rightarrow\mathbb{R}\cup+\{\infty\}$. This type of problem finds wide applications in machine learning and statistics, and gradient descent methods tailored for it have received increasing attention (see, e.g., \cite{A2023,AFP2023,BMS2022,BG2018,TFY2022}). To address the aforementioned challenge, we incorporate curvature information to scale each objective within the direction-finding subproblem and propose a scaled proximal gradient method for multiobjective optimization (SPGMO). We analyze the convergence rates of SPGMO and offer new theoretical insights, helping to explain its observed rapid convergence in practical applications. The primary contributions of this paper are summarized as follows:
\par (i) When the smooth parameters are known, we apply them to scale each of objectives in the SPGMO and prove that its rate of convergence in terms of $\|x^{k}-x^{*}\|$ is $\sqrt{1-\min_{i=1,2,\cdots,m}\left\{{\mu_{i}}/{L_{i}}\right\}}$. The improved linear convergence bridges the theoretical gap between first-order methods for SOPs and MOPs, providing a theoretical basis for the superior performance of SPGMO over PGMO. Additionally, we establish improved linear convergence for SPGMO when applied to MOPs with both linear and strongly convex objectives. The convergence rate primarily depends on the strongly convex objectives, with the influence of linear objectives mitigated through small scaling. To our knowledge, this is the first result demonstrating linear convergence for descent methods on this class of problems.
\par (ii) When the smooth parameters are unknown, we investigate the selection of scaling parameters in SPGMO with line search. We demonstrate that these scaling parameters are crucial for the convergence rate of SPGMO with line search. Theoretical results indicate that optimal linear convergence is achieved by choosing either $\{\mu_{i}:i=1,2,...,m\}$ or $\{L_{i}:i=1,2,...,m\}$ as the scaling parameters. Notably, in practical applications, it is advisable to select each scaling parameter from the interval $[\mu_{i},L_{i}]$ to better capture the local geometry of the problem. Consequently, the Barzilai-Borwein method \cite{BB1988,CTY2023} emerges as a judicious choice for scaling parameters.
\par (iii) We propose an accelerated SPGMO (ASPGMO) which unifies the convex and strongly convex cases by employing different momentum parameters. In the context of convex problems, the ASPGMO recovers the well-known sub-linear convergence rate $\mathcal{O}(1/k^{2})$, a result initially rigorously demonstrated by Tanabe et al. \cite{TFY2022} for MOPs, with more recent advancements discussed in \cite{SP2022,SP2024}. For strongly convex problems, we demonstrate that ASPGMO, with an appropriately chosen momentum parameter, converges linearly at a rate of $1-\sqrt{\min_{i=1,2,\cdots,m}\left\{{\mu_{i}}/{L_{i}}\right\}}$. 
\par The remainder of the paper is organized as follows. In section \ref{sec2}, we present some necessary notations and definitions that will be used later. Section \ref{sec3} recalls the PGMO \cite{TFY2019}. In section \ref{sec4}, we propose a scaled proximal gradient method for MOPs with known smooth parameters and establish an improved linear convergence for strongly convex cases. Section \ref{sec5} is devoted to a scaled proximal gradient method for MOPs with unknown smooth parameters and discusses the selection of scaling parameters in the proposed approach. In section \ref{sec6}, we incorporate Nesterov's acceleration technique into SPGMO and analyze its convergence rates for both convex and strongly convex scenarios. The numerical experiments are presented in section \ref{sec7}, demonstrating the efficiency of the SPGMO and validating the theoretical results. Finally, we draw some conclusions at the end of the paper.

\section{Preliminaries}\label{sec2}
Throughout this paper, the $n$-dimensional Euclidean space $\mathbb{R}^{n}$ is equipped with the inner product $\langle\cdot,\cdot\rangle$ and the induced norm $\|\cdot\|$. We denote by $Jf(x)\in\mathbb{R}^{m\times n}$ the Jacobian matrix of $f$ at $x$, by $\nabla f_{i}(x)\in\mathbb{R}^{n}$ the gradient of $f_{i}$ at $x$.
We denote $[m]:=\{1,2,\cdots,m\}$, and $\Delta_{m}:=\{\lambda\in\mathbb{R}^{m}_{+}:\sum_{i\in[m]}\lambda_{i}=1\}$ the $m$-dimensional unit simplex. We define order relations $\preceq$ and $\prec$ in $\mathbb{R}^{m}$ as $$u\preceq v~\Leftrightarrow~v-u\in\mathbb{R}^{m}_{+}$$ and $$u\prec v~\Leftrightarrow~v-u\in\mathbb{R}^{m}_{++},$$ respectively.
We denote the level set of $F$ on $c\in\mathbb{R}^{m}$ as 
$$\mathcal{L}_{F}(c):=\{x:F(x)\preceq c\}.$$
\par Next, we introduce optimality concepts for (\ref{MCOP}) in the Pareto sense. 
\vspace{1mm}
\begin{definition}\label{def1}
	A vector $x^{\ast}\in\mathbb{R}^{n}$ is called Pareto solution to (\ref{MCOP}), if there exists no $x\in\mathbb{R}^{n}$ such that $F(x)\preceq F(x^{\ast})$ and $F(x)\neq F(x^{\ast})$.
\end{definition}
\vspace{1mm}
\begin{definition}\label{def2}
	A vector $x^{\ast}\in\mathbb{R}^{n}$ is called weakly Pareto solution to (\ref{MCOP}), if there exists no $x\in\mathbb{R}^{n}$ such that $F(x)\prec F(x^{\ast})$.
\end{definition}
\vspace{1mm}
\begin{definition}
	A differentiable function $h:\mathbb{R}^{n}\rightarrow\mathbb{R}$ is $L$-smooth if $$h(y)\leq h(x) + \dual{\nabla h(x),y-x}+\frac{L}{2}\|y-x\|^{2}$$ holds for
	all $x,y\in\mathbb{R}^{n}$. And $h$ is $\mu$-strongly convex if $$h(y)\geq h(x) + \dual{\nabla h(x),y-x}+\frac{\mu}{2}\|y-x\|^{2}$$ holds for all $x,y\in\mathbb{R}^{n}$.
\end{definition}

%

\section{Proximal gradient method for MCOPs}\label{sec3}
For the (\ref{MCOP}), Tanabe et al. \cite{TFY2019} proposed the following proximal gradient subproblem:
\begin{equation}\label{sub1}
	\min\limits_{x\in\mathbb{R}^{n}}\max\limits_{i\in[m]}\left\{{
		\left\langle\nabla f_{i}(x^{k}),d\right\rangle + g_{i}(x^{k}+d)-g_{i}(x^{k})}+\frac{\ell}{2}\|d\|^{2}\right\},
\end{equation}
where $\ell>0$. 
Assume that $f_{i}$ is $L_{i}$-smooth for $i\in[m]$, denote $L_{\max}:=\max\{L_{i}:i\in[m]\}$. When $\{L_{1},\cdots,L_{m}\}$ is available, the complete proximal gradient method for MCOPs is described as follows.
\begin{algorithm}\small  
	\caption{{\ttfamily{Proximal\_gradient\_method\_for\_MCOPs}}~\cite{TFY2019}}\label{pgmo} 
	\begin{algorithmic}[1]
		\REQUIRE{$x^{0}\in\mathbb{R}^{n},~\ell\geq{L_{\max}}$}
		\FOR{$k=0,...$}
		\STATE{Compute $d_{\ell}^{k}$ by solving subproblem (\ref{sub1})}
		\IF{$d_{\ell}^{k}=0$}
		\RETURN{$x^{k}$}
		\ELSE{
			\STATE{Update $x^{k+1}:= x^{k}+d_{\ell}^{k}$}  }
		\ENDIF
		\ENDFOR
	\end{algorithmic}
\end{algorithm}

When $m=1$, Algorithm \ref{pgmo} reduces to the proximal gradient method for SOPs, it is known that $\{x^{k}\}$ converges linearly to the unique minimizer when $f_{1}$ is strongly convex with $\mu_{1}>0$. Tanabe et al. \cite{TFY2023} also established the linear convergence for Algorithm \ref{pgmo}.
\begin{theorem}[See Theorem 5.3 of \cite{TFY2023}]\label{T3.1}
	Assume that $f_{i}$ is strongly convex with module $\mu_{i}>0$ and $L_{i}$-smooth for $i\in[m]$, let $\{x^{k}\}$ be the sequence generated by Algorithm \ref{pgmo}. Then there exists a Pareto solution $x^{*}$ such that 
	$$\nm{x^{k+1}-x^{*}}\leq\sqrt{1-\frac{\mu_{\min}}{L_{\max}}}\nm{x^{k}-x^{*}}.$$
\end{theorem}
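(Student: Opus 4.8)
The plan is to reduce each iteration of Algorithm \ref{pgmo} to a single \emph{scalar} proximal gradient step applied to a suitable convex combination of the objectives, and then to combine the classical contraction estimate for such a step with the monotonicity of $\{F(x^k)\}$ in order to identify one common limit point that every iterate contracts toward.

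First, I would record the two structural facts about subproblem (\ref{sub1}). Writing $\theta_\ell(x^k)$ for its optimal value and $d^k_\ell$ for its solution (unique, since the objective is $\ell$-strongly convex in $d$), strong convexity tested at $d=0$ gives $\theta_\ell(x^k)\le -\frac{\ell}{2}\nm{d^k_\ell}^2\le 0$, while $L_i$-smoothness together with $\ell\ge L_{\max}$ yields $F_i(x^{k+1})-F_i(x^k)\le\theta_\ell(x^k)$ for every $i\in[m]$. Hence each $F_i$ is nonincreasing along the iterates and $\sum_k\nm{d^k_\ell}^2<\infty$, so $d^k_\ell\to 0$. Since each $f_i$ is strongly convex and therefore coercive, the level set $\mathcal{L}_F(F(x^0))$ is bounded, so $\{x^k\}$ admits a limit point $x^*$; by lower semicontinuity of $F_i$ and the fact that $\{F_i(x^k)\}_k$ decreases to its limit, one obtains $F(x^*)\preceq F(x^k)$ for all $k$, and $d^k_\ell\to 0$ together with continuity of the solution map shows $x^*$ is Pareto critical, hence Pareto optimal by convexity.

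The central step is the scalarization. Since (\ref{sub1}) may be written as $\min_d\max_{\lambda\in\Delta_m}\sum_i\lambda_i\big(\dual{\nabla f_i(x^k),d}+g_i(x^k+d)-g_i(x^k)\big)+\frac{\ell}{2}\nm{d}^2$, Sion's minimax theorem supplies a saddle weight $\lambda^k\in\Delta_m$ for which $d^k_\ell$ solves the inner minimization. Substituting $z=x^k+d$, this exhibits $x^{k+1}$ as the ordinary proximal gradient step, with step size $1/\ell$, for the scalar function $\Phi_k:=\sum_i\lambda^k_i f_i+\sum_i\lambda^k_i g_i$, whose smooth part has gradient Lipschitz constant $\sum_i\lambda^k_i L_i\le L_{\max}\le\ell$ and strong convexity modulus $\sum_i\lambda^k_i\mu_i\ge\mu_{\min}$. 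The standard three-point prox inequality then gives, for every $y$, $\Phi_k(x^{k+1})+\frac{\ell}{2}\nm{y-x^{k+1}}^2\le\Phi_k(y)+\frac{\ell-\mu_{\min}}{2}\nm{y-x^k}^2$.

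Finally, I would set $y=x^*$. Because $F(x^*)\preceq F(x^{k+1})$ and $\lambda^k\in\Delta_m$ has nonnegative entries, $\Phi_k(x^*)\le\Phi_k(x^{k+1})$; the two copies of $\Phi_k$ then cancel and leave $\nm{x^{k+1}-x^*}^2\le(1-\mu_{\min}/\ell)\nm{x^k-x^*}^2$, which is the claim once the extremal choice $\ell=L_{\max}$ is used (a larger $\ell$ yields the weaker constant $\sqrt{1-\mu_{\min}/\ell}$). I expect the main obstacle to be precisely this cancellation: the saddle weight $\lambda^k$, and hence the scalar minimizer it induces, changes from iteration to iteration, so there is no fixed target minimizer to contract toward. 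The resolution is that $x^*$ need not minimize any single $\Phi_k$; it suffices that it be dominated from below by every iterate, i.e. $F(x^*)\preceq F(x^{k+1})$, a property furnished entirely by the monotone decrease of $F$ established in the first step. One must also ensure that the $\max$ in (\ref{sub1}) is genuinely \emph{replaced} by the $\lambda^k$-weighted sum rather than merely bounded by it, which is why passing through the saddle point, instead of plugging a test direction directly into the subproblem, is essential.
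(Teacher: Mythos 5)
Your argument is correct and follows essentially the same route as the paper's treatment: the paper cites Theorem~\ref{T3.1} from \cite{TFY2023} without reproving it, but its own Lemma~\ref{l5.9} is exactly your scalarized three-point proximal inequality (with saddle weights $\lambda^k\in\Delta_m$ extracted from the dual of the subproblem), and the proof of Theorem~\ref{T5} is precisely your final step of substituting $y=x^{*}$ and using $F(x^{*})\preceq F(x^{k})$ to cancel the two weighted function-value terms. Your additional care in constructing $x^{*}$ (monotone decrease, bounded level sets, $d^{k}_{\ell}\to 0$, criticality of the limit point) fills in a part that the paper leaves implicit in the hypothesis ``$x^{*}$ satisfies $F(x^{*})\preceq F(x^{k})$ for all $k$.''
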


Although Algorithm \ref{pgmo} enjoys linear convergence in strongly convex cases, the rate of convergence can be very slow even if each of the objectives is well-conditioned. We clarify the statement with the following example.
\vspace{1mm}
\begin{example}\label{exp1}
	Let us consider the following bi-objective problem:
	\begin{align*}
		\min\limits_{x\in \mathbb{R}^{2}}\left( F_{1}(x), F_{2}(x)\right),
	\end{align*}
	where $f_{1}(x)=\frac{1}{2}x_{1}^{2}+\frac{1}{2}x_{2}^{2}$, $f_{2}(x)=\frac{L}{2}x_{1}^{2}+\frac{L}{2}x_{2}^{2}$ ($L$ is a very large positive constant), $g_{1}(x)=g_{2}(x)=0$. It is easy to see that $L_{\max}=L$, condition numbers of $f_{1}$ and $f_{2}$ are both $1$, and the unique Pareto solution is $x^{*}=(0,0)^{T}$. For any noncritical point $x^{k}$, we derive that $x^{k+1}=(1-{1}/{L})x^{k}$, i.e., $$\nm{x^{k+1}-x^{*}}=\left(1-\frac{1}{L}\right)\nm{x^{k}-x^{*}}.$$
	Recall that $L$ is a very large positive constant, thus the rate of linear convergence is very slow.
\end{example}
\section{Scaled proximal gradient method for MCOPs}\label{sec4}

In this section, we propose a scaled proximal gradient method for (\ref{MCOP}) with improved linear convergence. Firstly, we emphasize that the slow convergence of Algorithm \ref{pgmo} is mainly due to the global upper bound used in (\ref{sub1}):
$$F_{i}(x^{k}+d)-F_{i}(x^{k})\leq\left\langle\nabla f_{i}(x^{k}),d\right\rangle+g_{i}(x^{k}+d)-g_{i}(x^{k})+\frac{L_{\max}}{2}\|d\|^{2}$$ for all $i\in[m]$, which may be too conservative for objectives with small global smoothness parameters. Instead, we employ a separate global smoothness parameter for each objective to devise the following scaled proximal gradient subproblem:
\begin{equation}\label{sub2}
	\min\limits_{x\in\mathbb{R}^{n}}\max\limits_{i\in[m]}\left\{\frac{
		\left\langle\nabla f_{i}(x^{k}),d\right\rangle + g_{i}(x^{k}+d)-g_{i}(x^{k})}{L_{i}}+\frac{1}{2}\|d\|^{2}\right\}.
\end{equation}
This strategy can capture the curvature information of each objective and alleviate interference among the objectives. The complete scaled proximal gradient method for MCOPs (SPGMO) is described as follows. 

\begin{algorithm}\small  
	\caption{{\ttfamily{Scaled\_proximal\_gradient\_method\_for\_MCOPs}}~\cite{TFY2019}}\label{spgmo} 
	\begin{algorithmic}[1]
		\REQUIRE{$x^{0}\in\mathbb{R}^{n}$}
		\FOR{$k=0,...$}
		\STATE{Compute $d^{k}$ by solving subproblem (\ref{sub2})}
		\IF{$d^{k}=0$}
		\RETURN{$x^{k}$}
		\ELSE{
			\STATE{Update $x^{k+1}:= x^{k}+d^{k}$}  }
		\ENDIF
		\ENDFOR
	\end{algorithmic}
\end{algorithm}

Let us denote the following scaled multiobjective composite optimization problem:
\begin{align*}
	\min\limits_{x\in\mathbb{R}^{n}} F^{L}(x), \tag{MCOP$_{L}$}\label{MCOP2}
\end{align*}
where ${F}^{L}:\mathbb{R}^{n}\rightarrow(\mathbb{R}\cup\{+\infty\})^{m}$ is a vector-valued function. Each component $F^{L}_{i}$, $i\in[m]$, is defined by
$$F^{L}_{i}:=\frac{f_{i}+g_{i}}{L_{i}}.$$ 
By scaling the objective functions, we note that Algorithm \ref{spgmo} for (\ref{MCOP}) can be interpreted as Algorithm \ref{pgmo} for (\ref{MCOP2}). As a result, the convergence rates analysis for Algorithm \ref{pgmo} in \cite{TFY2023} also applies to Algorithm \ref{spgmo}. Before we proceed with the convergence analysis, we will first present several properties of  (\ref{MCOP2}).
\begin{proposition}\label{fl}
	For the {\rm(\ref{MCOP2})}, the following statements hold.
	\begin{itemize}
		\item[$\mathrm{(i)}$]  $x\in\mathbb{R}^{n}$ is a Pareto critical point of {\rm(\ref{MCOP2})} if and only if $x\in\mathbb{R}^{n}$ is a Pareto critical point of {\rm(\ref{MCOP})}.
		\item[$\mathrm{(ii)}$]  For $i\in[m]$, if $f_{i}$ is $L_{i}$-smooth, then ${f_{i}}/{L_{i}}$ is $1$-smooth.
		\item[$\mathrm{(iii)}$]  For $i\in[m]$, if $f_{i}$ is strongly convex with $\mu_{i}\geq0$, then ${f_{i}}/{L_{i}}$ is strongly convex with ${\mu_{i}}/{L_{i}}$.
	\end{itemize}
\end{proposition}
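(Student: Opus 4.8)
The plan is to treat the three parts separately. Parts (ii) and (iii) follow immediately from the defining inequalities of smoothness and strong convexity, whereas part (i) rests on the scale-invariance of Pareto criticality under positive rescaling of the objectives.

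For part (ii), I would begin with the $L_i$-smoothness inequality for $f_i$, namely $f_i(y)\leq f_i(x)+\dual{\nabla f_i(x),y-x}+\frac{L_i}{2}\nm{y-x}^2$, and divide through by $L_i>0$. Since $\nabla(f_i/L_i)=(1/L_i)\nabla f_i$, the result is exactly the $1$-smoothness inequality for $f_i/L_i$. Part (iii) is analogous: dividing the $\mu_i$-strong-convexity inequality $f_i(y)\geq f_i(x)+\dual{\nabla f_i(x),y-x}+\frac{\mu_i}{2}\nm{y-x}^2$ by $L_i$ yields the strong-convexity inequality for $f_i/L_i$ with modulus $\mu_i/L_i$. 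Each of these is a one-line computation with no subtlety.

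The substantive step is part (i). Here I would recall the characterization of a Pareto critical point for a composite problem: $x$ is Pareto critical for (\ref{MCOP}) precisely when there is no direction $d$ with $F_i'(x;d)<0$ for all $i\in[m]$, where $F_i'(x;d)=\dual{\nabla f_i(x),d}+g_i'(x;d)$ denotes the directional derivative of the composite objective $F_i=f_i+g_i$. The key observation is that the objectives of (\ref{MCOP2}) are $F_i^L=F_i/L_i$ with each $L_i>0$, so $(F_i^L)'(x;d)=(1/L_i)F_i'(x;d)$. Because every $L_i$ is strictly positive, rescaling preserves the sign of each directional derivative; hence a direction that strictly decreases all objectives at $x$ exists for (\ref{MCOP}) if and only if one exists for (\ref{MCOP2}). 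Passing to contrapositives gives the stated equivalence of Pareto critical points.

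I expect the only delicate point to be fixing the precise form of the Pareto-criticality characterization and expressing it so that the scaling is transparent. An equivalent route is through the subproblem value functions: a point $x$ is Pareto critical exactly when $d=0$ solves the corresponding subproblem, a condition that in both (\ref{sub1}) and (\ref{sub2}) reduces to the absence of a common descent direction, and this condition is unaffected by positive per-objective scaling. Either way the argument collapses to the single elementary fact that multiplying each objective by a strictly positive constant leaves the cone of common descent directions unchanged, so no genuine obstacle arises.
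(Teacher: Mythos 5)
Your proposal is correct and matches the paper's intent: the paper simply states that the proposition "can be easily derived by the definition of $F^{L}$" and omits the details, and your argument supplies exactly those details — dividing the smoothness and strong-convexity inequalities by $L_{i}>0$ for (ii) and (iii), and observing for (i) that positive per-objective scaling preserves the (non)existence of a common descent direction. No gaps.
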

\begin{proof}
	The proof can be easily derived by the definition of $F^{L}$, we omit it here.
\end{proof}
\subsection{Improved linear convergence of SPGMO}
Firstly, we establish a fundamental inequality.
\begin{lemma}\label{l5.9}
	Assume that $f_{i}$ is strongly convex with modulus $\mu_{i}\geq0$ and $L_{i}$-smooth for $i\in[m]$. Let $\{x^{k}\}$ be the sequence generated by Algorithm \ref{spgmo}. Then, there exists $\lambda^{k}\in\Delta_{m}$ such that
	\begin{equation}\label{fineq}
		\begin{aligned}
			&~~~~\sum\limits_{i\in[m]}\lambda^{k}_{i}\frac{F_{i}(x^{k+1})-F_{i}(x)}{L_{i}}\\
			&\leq\frac{1}{2}\|x^{k}-x\|^{2}-\frac{1}{2}\|x^{k+1}-x\|^{2}-\sum\limits_{i\in[m]}\lambda^{k}_{i}\frac{\mu_{i}}{2L_{i}}\|x^{k}-x\|^{2},~{\rm for~all}~ x\in\mathbb{R}^{n}.
		\end{aligned}
	\end{equation}
\end{lemma}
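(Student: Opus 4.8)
The plan is to exploit the saddle-point structure of the scaled subproblem (\ref{sub2}) together with the smoothness and strong convexity facts recorded in Proposition \ref{fl}. First I would recast the inner maximization over $[m]$ as a maximization over the simplex: since every $g_i$ is convex and $\tfrac{1}{2}\nm{d}^2$ is convex, (\ref{sub2}) is equivalent to the min-max problem for the convex-concave function
\[
\Phi(d,\lambda)=\sum_{i\in[m]}\lambda_i\frac{\dual{\nabla f_i(x^k),d}+g_i(x^k+d)-g_i(x^k)}{L_i}+\frac{1}{2}\nm{d}^2 .
\]
Because $\Phi$ is convex in $d$, linear (hence concave) in $\lambda$, and $\Delta_m$ is compact, Sion's minimax theorem yields a saddle point $(d^k,\lambda^k)$ with $\lambda^k\in\Delta_m$, where $d^k$ solves (\ref{sub2}) and $x^{k+1}=x^k+d^k$. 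From the minimization in $d$ at the saddle point I would extract the first-order stationarity condition: there exist subgradients $\xi_i\in\partial g_i(x^{k+1})$ with
\[
\sum_{i\in[m]}\lambda_i^k\frac{\nabla f_i(x^k)+\xi_i}{L_i}+d^k=0 .
\]
This identity is the hinge of the entire estimate.

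The core is then a descent-lemma computation carried out on the scaled objectives. By Proposition \ref{fl}(ii) each $f_i/L_i$ is $1$-smooth, giving
\[
\frac{F_i(x^{k+1})}{L_i}\leq\frac{f_i(x^k)}{L_i}+\frac{\dual{\nabla f_i(x^k),d^k}+g_i(x^{k+1})}{L_i}+\frac{1}{2}\nm{d^k}^2 .
\]
I would multiply by $\lambda_i^k$ and sum over $i$; the quadratic term survives with coefficient $\tfrac{1}{2}$ because $\sum_i\lambda_i^k=1$. Then, using Proposition \ref{fl}(iii) to lower-bound $f_i(x)/L_i$ via $(\mu_i/L_i)$-strong convexity and convexity of $g_i$ with the subgradient $\xi_i$ to lower-bound $g_i(x)/L_i$, I would replace $f_i(x^k)/L_i$ and $g_i(x^{k+1})/L_i$ by expressions anchored at the arbitrary comparison point $x$. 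This introduces $F_i(x)/L_i$ on the left-hand side and the term $-\sum_i\lambda_i^k\frac{\mu_i}{2L_i}\nm{x^k-x}^2$ on the right, while the gradient and subgradient contributions collect into $\dual{\sum_i\lambda_i^k(\nabla f_i(x^k)+\xi_i)/L_i,\,x^{k+1}-x}$.

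Finally, the stationarity condition collapses that inner product to $-\dual{d^k,x^{k+1}-x}$. Writing $d^k=x^{k+1}-x^k$ and invoking the three-point identity $\nm{x^{k+1}-x}^2=\nm{x^k-x}^2+2\dual{d^k,x^k-x}+\nm{d^k}^2$ turns $-\dual{d^k,x^{k+1}-x}+\tfrac{1}{2}\nm{d^k}^2$ into exactly $\tfrac{1}{2}\nm{x^k-x}^2-\tfrac{1}{2}\nm{x^{k+1}-x}^2$, which delivers (\ref{fineq}) for all $x\in\mathbb{R}^n$. The step I expect to be most delicate is the first one: rigorously producing the multiplier $\lambda^k\in\Delta_m$ and the stationarity identity with $\xi_i\in\partial g_i(x^{k+1})$, since this requires both justifying the minimax exchange and a careful treatment of the subdifferential of the max (equivalently, of $\sum_i\lambda_i^k g_i$) at $x^{k+1}$. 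Once that condition is secured, the remaining smoothness and strong-convexity bookkeeping is routine.
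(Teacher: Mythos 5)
Your argument is correct and is essentially the same one the paper invokes by deferring to \cite[Lemma 5.2]{TFY2020}: extract the dual multipliers $\lambda^k\in\Delta_m$ and the stationarity condition from the subproblem, apply the descent lemma to the $1$-smooth functions $f_i/L_i$, lower-bound via $(\mu_i/L_i)$-strong convexity and the subgradient inequality for $g_i$, and finish with the three-point identity. The only point needing a word of care --- which you already flag --- is the constraint qualification justifying the subdifferential sum rule when some $g_i$ are extended-valued (e.g.\ indicator functions), but this is exactly how the cited proof proceeds as well.
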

\begin{proof}
	The assertions can be obtained by using similar arguments as in the proof of \cite[Lemma 5.2]{TFY2020}
\end{proof}
\begin{theorem}\label{T5}
	Assume that $f_{i}$ is strongly convex with modulus $\mu_{i}>0$ and $L_{i}$-smooth for $i\in[m]$. Let $\{x^{k}\}$ be the sequence generated by Algorithm \ref{spgmo} and $x^{*}$ be the Pareto solution satisfies $F(x^{*})\preceq F(x^{k})$ for all $k\geq0$. Then we have 
	$$\|x^{k+1}-x^{*}\|\leq\sqrt{1-\min\limits_{i\in[m]}\left\{\frac{\mu_{i}}{ L_{i}}\right\}}\|x^{k}-x^{*}\|.$$
\end{theorem}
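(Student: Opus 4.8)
The plan is to specialize the fundamental inequality of Lemma \ref{l5.9} to the test point $x=x^{*}$ and then exploit the Pareto-domination hypothesis on $x^{*}$ to eliminate the left-hand side, leaving a one-step contraction in $\|x^{k}-x^{*}\|$. Concretely, Lemma \ref{l5.9} supplies a multiplier $\lambda^{k}\in\Delta_{m}$ with
$$\sum_{i\in[m]}\lambda^{k}_{i}\frac{F_{i}(x^{k+1})-F_{i}(x^{*})}{L_{i}}\leq\frac{1}{2}\|x^{k}-x^{*}\|^{2}-\frac{1}{2}\|x^{k+1}-x^{*}\|^{2}-\sum_{i\in[m]}\lambda^{k}_{i}\frac{\mu_{i}}{2L_{i}}\|x^{k}-x^{*}\|^{2}.$$
The first step is to observe that the left-hand side is nonnegative: by hypothesis $F(x^{*})\preceq F(x^{k+1})$, so $F_{i}(x^{k+1})-F_{i}(x^{*})\geq 0$ for every $i\in[m]$, and since each $\lambda^{k}_{i}\geq 0$ and $L_{i}>0$, the whole sum is $\geq 0$.

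Dropping this nonnegative term and rearranging the remaining inequality gives $\|x^{k+1}-x^{*}\|^{2}\leq\bigl(1-\sum_{i\in[m]}\lambda^{k}_{i}\tfrac{\mu_{i}}{L_{i}}\bigr)\|x^{k}-x^{*}\|^{2}$. The next step is to bound the coefficient uniformly in $k$: because $\lambda^{k}\in\Delta_{m}$, the quantity $\sum_{i\in[m]}\lambda^{k}_{i}\tfrac{\mu_{i}}{L_{i}}$ is a convex combination of the ratios $\mu_{i}/L_{i}$ and hence is at least $\min_{i\in[m]}\{\mu_{i}/L_{i}\}$; therefore the contraction factor satisfies $1-\sum_{i\in[m]}\lambda^{k}_{i}\tfrac{\mu_{i}}{L_{i}}\leq 1-\min_{i\in[m]}\{\mu_{i}/L_{i}\}$. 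Taking square roots then yields the claimed bound. I would note in passing that $\mu_{i}\leq L_{i}$ for each $i$ (the strong-convexity modulus never exceeds the smoothness constant), so $1-\min_{i}\{\mu_{i}/L_{i}\}\in[0,1)$ and the square root is well defined and the estimate is genuinely contractive.

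I expect the real content to reside entirely in Lemma \ref{l5.9}, whose proof (via the scaled subproblem (\ref{sub2}) and the arguments of \cite[Lemma 5.2]{TFY2020}) is what actually produces the multiplier $\lambda^{k}$ together with the strong-convexity term. Granting that lemma, the only delicate point in the present argument is the sign of the left-hand side, which hinges crucially on choosing $x^{*}$ to be the Pareto solution dominating the entire iterate sequence; without the hypothesis $F(x^{*})\preceq F(x^{k})$ the sum could be negative and the contraction would fail. The uniformity of the rate across all $k$ comes for free from the simplex constraint on $\lambda^{k}$, so no iteration-wise bookkeeping of the multipliers is needed.
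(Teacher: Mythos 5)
Your proposal is correct and follows essentially the same route as the paper: substitute $x=x^{*}$ into the inequality of Lemma \ref{l5.9}, discard the nonnegative left-hand side using the domination hypothesis (which holds for all $k$, hence for $k+1$), and bound the convex combination $\sum_{i}\lambda^{k}_{i}\mu_{i}/L_{i}$ below by $\min_{i}\{\mu_{i}/L_{i}\}$ via $\lambda^{k}\in\Delta_{m}$. The paper's proof is identical in substance, so there is nothing to add.
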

\vspace{1mm}
\begin{proof}
	By substituting $x=x^{*}$ into inequality (\ref{fineq}), we obtain
	\begin{align*}
		&~~~~\sum\limits_{i\in[m]}\lambda^{k}_{i}\frac{F_{i}(x^{k+1})-F_{i}(x^{*})}{L_{i}}\\
		&\leq\frac{1}{2}\|x^{k}-x^{*}\|^{2}-\frac{1}{2}\|x^{k+1}-x^{*}\|^{2}-\sum\limits_{i\in[m]}\lambda^{k}_{i}\frac{\mu_{i}}{2L_{i}}\|x^{k}-x^{*}\|^{2}~{\rm for~all}~ x\in\mathbb{R}^{n}.
	\end{align*}
	Applying $F(x^{*})\preceq F(x^{k})$, it follows that
	\begin{equation}\label{nsc}
		\|x^{k+1}-x^{*}\|\leq\sqrt{1-\sum\limits_{i\in[m]}\lambda^{k}_{i}\frac{\mu_{i}}{L_{i}}}\|x^{k}-x^{*}\|\leq\sqrt{1-\min\limits_{i\in[m]}\left\{\frac{\mu_{i}}{L_{i}}\right\}}\|x^{k}-x^{*}\|,
	\end{equation}
	where the last inequality holds due to the fact $\lambda^{k}\in\Delta_{m}$.
\end{proof}
\vspace{1mm}
\begin{remark}
	In the following, we present two remarks on the linear convergence results of the SPGMO and PGMO.
	\begin{itemize}
		\item[(i)] When $m=1$, both the SPGMO and the PGMO reduce to the proximal gradient method for SOPs. In cases where $m\neq1$, the condition numbers $\tilde{\kappa}:=L_{\max}/\mu_{\min}$ and $\kappa:=\max_{i\in[m]}\{L_{i}/\mu_{i}\}$ are pivotal for the geometric convergence of PGMO and SPGMO, respectively. Notably, since $\tilde{\kappa}\geq\kappa$, the SPGMO exhibits improved linear convergence compared to the PGMO. Furthermore, the SPGMO exhibits rapid linear convergence provided that all differentiable components are not ill-conditioned. Conversely, in such scenarios, the PGMO may experience slow convergence due to objective imbalances (see Example \ref{exp1}).
		\item[(ii)] Objective imbalances pose a significant challenge to existing first-order methods in multiobjective optimization, particularly in large-scale and real-world scenarios where objective imbalances are intrinsic to MOPs. To quantify the objective imbalances in strongly convex cases, we introduce the following parameter:
		\begin{equation}\label{oimb}
			\zeta:=\frac{\tilde{\kappa}}{\kappa}.
		\end{equation}
		A larger value of this parameter indicates a greater degree of objective imbalances in the problem.
		\item[(iii)] As outlined in Figure \ref{fl}, for the function $F^{L}$ we have $L_{\max}=1$ and $\mu_{\min}=\min_{i\in[m]}\{\mu_{i}/L_{i}\}$. Consequently, Theorem \ref{T5} follows as a corollary of Theorem \ref{T3.1}, resulting in an objective imbalances parameter $\zeta$ of $F^{L}$ is $1$.
	\end{itemize}
\end{remark}

\subsection{Linear convergence with some linear objective functions}
In addition to strongly convex cases, linear objectives often introduce significant objective imbalances in MOPs, decelerating the convergence of first-order methods \cite[Examples 2 and 3]{CTY2023}. In what follows, we confirm that a small scaling parameter is beneficial to mitigate the influence of the linear objectives from a theoretical perspective. Consider the linear constrained MOP with linear objective functions, which is described as follows:

\begin{align*}
	\min\limits_{x\in \mathcal{X}} f(x),\tag{LCMOP}\label{LCMOP}
\end{align*}
where $f:\mathbb{R}^{n}\mapsto\mathbb{R}^{m}$ is a vector-valued function; the component $f_{i}$ is linear for $i\in\mathcal{L}$, and $\mu_{i}$-strongly convex and $L_{i}$-smooth for $i\in[m]\setminus\mathcal{L}$, respectively; $\mathcal{X}=\{x:Ax\leq a,~Bx=b\}$ with $A\in\mathbb{R}^{\mathbb{|\mathcal{J}|}\times n},~B\in\mathbb{R}^{\mathbb{|\mathcal{E}|}\times n}$. This type of problem has wide applications in portfolio selection \cite{M1952} and can be reformulated as (\ref{MCOP}) with $F_{i}=f_{i}+\mathbb{I}_{\mathcal{X}}$ for $i\in[m]$, where 
\begin{equation*}
	\mathbb{I}_{\mathcal{X}}(x):=\left\{
	\begin{aligned}
		0,~~~~~~&x\in\mathcal{X},\\
		+\infty,~~~&x\notin\mathcal{X}.
	\end{aligned}
	\right.
\end{equation*}
\par Regarding the first inequality in (\ref{nsc}), the linear convergence rate in terms of $\|x^{k}-x^{*}\|$ is actually $\sqrt{1-\sum_{i\in[m]}\lambda^{k}_{i}\frac{\mu_{i}}{L_{i}}}$ for  Algorithm \ref{spgmo}. Consequently, Algorithm \ref{spgmo} exhibits linear convergence as long as $\sum_{i\in[m]}\lambda^{k}_{i}\frac{\mu_{i}}{L_{i}}>0$. A similar statement holds for PGMO. However, such a statement holds under the restrictive condition $\sum_{i\in[m]}\lambda^{k}_{i}\frac{\mu_{i}}{L_{i}}>0$, which is equivalent to $\sum_{i\in[m]\setminus\mathcal{L}}\lambda^{k}_{i}>0$. In what follows, we will prove that a small value of $L_{i}$ for $i\in\mathcal{L}$ leads to larger value of $\sum_{i\in[m]\setminus\mathcal{L}}\lambda^{k}_{i}$ in SPGMO.

When minimizing (\ref{LCMOP}) using SPGMO, the subproblem (\ref{sub2}) can be expressed as follows:

\begin{align*}
	\min&~~~~t +\frac{1}{2}\nm{d}^{2}         \\
	{\rm s.t.}&\dual{\frac{\nabla f_{i}(x^{k})}{L_{i}},d}\leq t,~i\in[m],\\
	&A(x^{k}+d)\leq a,\\
	&Bd=0,
\end{align*}
where $L_{i}=\alpha_{\min}$ for $i\in\mathcal{L}$, and $\alpha_{\min}$ is a very small positive constant\footnote{The upper bound $f_{i}(y)\leq f_{i}(x)+\dual{\nabla f_{i}(x),y-x}\leq L_{i}\nm{y-x}^{2}$ holds for $i\in\mathcal{L}$ with any $L_{i}
	\geq0$,.}.
By KKT conditions, we obtain $$d^{k}=-\left(\sum\limits_{i\in[m]}\lambda^{k}_{i}\frac{\nabla f_{i}(x^{k})}{L_{i}}+\sum\limits_{j\in\mathcal{J}}\eta^{k}_{j}A_{j}+\sum\limits_{e\in\mathcal{E}}\xi^{k}_{e}B_{e}\right),$$ where $A_{j}^{T}$ and $B_{e}^{T}$ is the $j$-th and $e$-th row of $A$ and $B$, respectively. The vector $(\lambda^{k},\eta^{k},\xi^{k})\in\mathbb{R}^{m+|\mathcal{J}|+|\mathcal{E}|}$ is a solution of the following Lagrangian dual problem:

\begin{align*}
	\min&~~~~\frac{1}{2}\nm{\sum\limits_{i\in[m]}\lambda_{i}\frac{\nabla f_{i}(x^{k})}{L_{i}}+\sum\limits_{j\in\mathcal{J}}\eta_{j}A_{j}+\sum\limits_{e\in\mathcal{E}}\xi_{e}B_{e}}^{2} - \sum\limits_{j\in\mathcal{J}}\eta_{j}\left(\dual{A_{j},x^{k}}-a_{j}\right)     \\
	{\rm s.t.}&~~~~(\lambda,\eta,\xi)\in\Delta_{m}\times\mathbb{R}^{|\mathcal{J}|}_{+}\times\mathbb{R}^{|\mathcal{E}|}.
\end{align*}
And complementary slackness condition gives that
\begin{equation}\label{cse}
	\eta^{k}_{j}\left(\dual{A_{j},x^{k}+d^{k}}-a_{j}\right)=0~{\rm for~all}~ j\in\mathcal{J}.
\end{equation}
Denote $\mathcal{J}^{k}:=\{j\in\mathcal{J}:\dual{A_{j},x^{k}}=a_{j}\}$, this together with (\ref{cse}) and the fact that $x^{k+1}=x^{k}+d^{k}$ implies 
$$d^{k}=-\left(\sum\limits_{i\in[m]}\lambda^{k}_{i}\frac{\nabla f_{i}(x^{k})}{L_{i}}+\sum\limits_{j\in\mathcal{J}^{k+1}}\eta^{k}_{j}A_{j}+\sum\limits_{e\in\mathcal{E}}\xi^{k}_{e}B_{e}\right).$$
Before presenting the linear convergence of SPGMO for (\ref{LCMOP}),  let us first define the following multiobjective linear programming problem:
\begin{align*}
	\min\limits_{x\in \mathcal{X}} f_{\mathcal{L}}(x),\tag{MLP}\label{MLP}
\end{align*}
where $f_{\mathcal{L}}:\mathbb{R}^{n}\rightarrow\mathbb{R}^{|\mathcal{L}|}$ is the linear component of $f$.
As a result, every weakly Pareto solution of (\ref{MLP}) is also a weakly Pareto solution of (\ref{LCMOP}). 
\vspace{1mm}
\begin{proposition}\label{pl1}
	Let $\{x^{k}\}$ be the sequence generated by Algorithm \ref{spgmo} with $L_{i}=\alpha_{\min},~i\in\mathcal{L}$ for (\ref{LCMOP}) and $x^{*}$ be the weakly Pareto solution satisfies $F(x^{*})\preceq F(x^{k})$ for all $k\geq0$. If $x^{*}$ is not a weakly Pareto solution of (\ref{MLP}), then $$\|x^{k+1}-x^{*}\|\leq\sqrt{1-(1-c_{1}\alpha_{\min})\min\limits_{i\in[m]\setminus\mathcal{L}}\left\{\frac{\mu_{i}}{ L_{i}}\right\}}\|x^{k}-x^{*}\|,$$
	where $c_{1}:=\max\limits_{i\in[m]\setminus\mathcal{L}}\frac{2(\|\nabla f_{i}(x^{0})\| + L_{i}R)}{\epsilon L_{i}}$.
\end{proposition}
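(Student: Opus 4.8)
The plan is to start from the one-step contraction already in hand and reduce the whole statement to a single quantitative estimate: that the total dual weight placed on the linear objectives is of order $\alpha_{\min}$. Concretely, setting $x=x^{*}$ in Lemma \ref{l5.9} and discarding the nonnegative left-hand side (legitimate because $F(x^{*})\preceq F(x^{k+1})$) gives
$$\|x^{k+1}-x^{*}\|^{2}\leq\Big(1-\sum_{i\in[m]}\lambda^{k}_{i}\frac{\mu_{i}}{L_{i}}\Big)\|x^{k}-x^{*}\|^{2}.$$
Since $\mu_{i}=0$ for every linear index $i\in\mathcal{L}$, only the strongly convex block survives, and $\sum_{i\in[m]\setminus\mathcal{L}}\lambda^{k}_{i}\frac{\mu_{i}}{L_{i}}\geq\big(\min_{i\in[m]\setminus\mathcal{L}}\frac{\mu_{i}}{L_{i}}\big)(1-S)$, where $S:=\sum_{i\in\mathcal{L}}\lambda^{k}_{i}$ and I used $\sum_{i\in[m]\setminus\mathcal{L}}\lambda^{k}_{i}=1-S$. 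Thus the proposition follows once I show $S\leq c_{1}\alpha_{\min}$.

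To bound $S$, I would first exploit the hypothesis that $x^{*}$ is not a weakly Pareto solution of (\ref{MLP}): there is a feasible $\bar{x}\in\mathcal{X}$ with $f_{i}(\bar{x})<f_{i}(x^{*})$ for all $i\in\mathcal{L}$; put $\epsilon:=\min_{i\in\mathcal{L}}\big(f_{i}(x^{*})-f_{i}(\bar{x})\big)>0$. Because each $f_{i}$, $i\in\mathcal{L}$, is linear with constant gradient and $F(x^{*})\preceq F(x^{k+1})$ forces $f_{i}(x^{k+1})\geq f_{i}(x^{*})$, I obtain the uniform improving estimate $\langle\nabla f_{i},\bar{x}-x^{k+1}\rangle=f_{i}(\bar{x})-f_{i}(x^{k+1})\leq-\epsilon$ for every $i\in\mathcal{L}$. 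This is exactly where the tiny scaling pays off: dividing by $L_{i}=\alpha_{\min}$ turns this fixed margin into the enormous quantity $\epsilon/\alpha_{\min}$, so carrying nontrivial weight on a linear objective becomes very costly.

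Next I would feed the KKT representation of the step into this estimate. Writing $d^{k}=-\big(\sum_{i\in[m]}\lambda^{k}_{i}\frac{\nabla f_{i}(x^{k})}{L_{i}}+n^{k}\big)$ with $n^{k}:=\sum_{j\in\mathcal{J}^{k+1}}\eta^{k}_{j}A_{j}+\sum_{e\in\mathcal{E}}\xi^{k}_{e}B_{e}$ lying in the normal cone $N_{\mathcal{X}}(x^{k+1})$ to $\mathcal{X}$ at $x^{k+1}$, I pair $d^{k}$ with $w:=\bar{x}-x^{k+1}$. Feasibility of $\bar{x}$ gives $\langle n^{k},w\rangle\leq0$, so the normal-cone multipliers drop out in the favorable direction; the linear block then contributes at least $\frac{\epsilon}{\alpha_{\min}}S$, while the smooth block is controlled through $\|\nabla f_{i}(x^{k})\|\leq\|\nabla f_{i}(x^{0})\|+L_{i}R$ (by $L_{i}$-smoothness together with the iterates remaining in a level set of radius $R$). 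Combining these with $\langle d^{k},w\rangle\leq\|d^{k}\|\,\|w\|$ and the boundedness of $\|d^{k}\|$ and $\|w\|$ on that level set yields $\frac{\epsilon}{\alpha_{\min}}S\leq(\text{smooth-gradient terms})$, which rearranges to $S\leq c_{1}\alpha_{\min}$ with $c_{1}=\max_{i\in[m]\setminus\mathcal{L}}\frac{2(\|\nabla f_{i}(x^{0})\|+L_{i}R)}{\epsilon L_{i}}$.

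The main obstacle is precisely this last quantitative weight estimate. Two points need care: first, the normal-cone term must be eliminated, which is why one pairs against $x^{k+1}$ (using the active set $\mathcal{J}^{k+1}$) rather than $x^{k}$, so that $\langle n^{k},\bar{x}-x^{k+1}\rangle\leq0$ holds; second, the constant $c_{1}$ must be tracked honestly through the smooth-objective gradient bounds and the radius $R$, so that the cross terms involving $\|d^{k}\|$ and $\|w\|$ are absorbed into the stated form. Everything else is routine: substituting $S\leq c_{1}\alpha_{\min}$ into the first display gives $1-\sum_{i\in[m]}\lambda^{k}_{i}\frac{\mu_{i}}{L_{i}}\leq1-(1-c_{1}\alpha_{\min})\min_{i\in[m]\setminus\mathcal{L}}\frac{\mu_{i}}{L_{i}}$, and taking square roots completes the argument.
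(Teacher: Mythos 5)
Your reduction of the proposition to the single estimate $S:=\sum_{i\in\mathcal{L}}\lambda^{k}_{i}\leq c_{1}\alpha_{\min}$ is exactly the skeleton of the paper's proof, but your route to that estimate is genuinely different. The paper works entirely on the dual side: from ``$x^{*}$ is not weakly Pareto for (\ref{MLP})'' it deduces $0\notin C^{k}$, where $C^{k}$ collects all convex combinations of the linear gradients plus active normal-cone directions, sets $\epsilon:=\min_{x\in\{C^{k}\}}\nm{x}>0$ (finitely many possible active sets), lower-bounds $\nm{d^{k}}$ by $\frac{\epsilon S}{\alpha_{\min}}-\bigl\lVert\sum_{i\in[m]\setminus\mathcal{L}}\lambda^{k}_{i}\nabla f_{i}(x^{k})/L_{i}\bigr\rVert$ via the KKT formula, and upper-bounds $\nm{d^{k}}$ by $\max_{i\in[m]\setminus\mathcal{L}}\nm{\nabla f_{i}(x^{k})/L_{i}}$ by comparing the optimal dual objective with a feasible dual point; combining the two gives $S\leq c_{1}\alpha_{\min}$ with the stated $c_{1}$. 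You instead run a primal separation argument: pick an improving point $\bar{x}$ for the linear objectives, pair $d^{k}$ with $\bar{x}-x^{k+1}$, kill the normal-cone term by feasibility, and let the margin $\epsilon/\alpha_{\min}$ do the work. This is a clean and arguably more intuitive mechanism, but two caveats. First, your final inequality is $\frac{\epsilon S}{\alpha_{\min}}\leq\nm{w}\bigl(\nm{d^{k}}+\max_{i\in[m]\setminus\mathcal{L}}\nm{\nabla f_{i}(x^{k})/L_{i}}\bigr)$ with $\epsilon$ a function-value gap rather than the paper's minimum gradient norm, and with an extra factor $\nm{w}$; so you prove $S=O(\alpha_{\min})$, but with a constant that is not the stated $c_{1}$ (harmless only because $\epsilon$ is a proof-internal quantity, but you should say so rather than claim the cross terms ``absorb into the stated form''). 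Second, and more importantly, you invoke ``boundedness of $\nm{d^{k}}$'' as if it were routine, whereas it is the one place where something could go wrong: the subproblem contains $\nabla f_{i}/\alpha_{\min}$ for $i\in\mathcal{L}$, so a priori $d^{k}$ could blow up as $\alpha_{\min}\to0$. The paper's dual-objective comparison (optimal dual value $\leq$ value at a dual-feasible point supported on a strongly convex index) is precisely what yields $\nm{d^{k}}\leq\max_{i\in[m]\setminus\mathcal{L}}\nm{\nabla f_{i}(x^{k})/L_{i}}$, uniformly in $\alpha_{\min}$; you need this step (or an equivalent) spelled out, otherwise your right-hand side is not visibly independent of $\alpha_{\min}$ and the conclusion $S\leq c\,\alpha_{\min}$ does not follow.
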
 
\vspace{1mm}
\begin{proof}
	We refer to Theorem \ref{T5}(ii), which states
	\begin{equation}\label{lce}
		\|x^{k+1}-x^{*}\|\leq\sqrt{1-\sum\limits_{i\in[m]}\lambda^{k}_{i}\frac{\mu_{i}}{L_{i}}}\|x^{k}-x^{*}\|.
	\end{equation} 
	Given that $x^{*}$ is not a weakly Pareto solution of (\ref{MLP}), we deduce $0\notin C^{k}:=\{\sum_{i\in\mathcal{L}}\lambda_{i} g_{i}+\sum_{j\in\mathcal{J}^{k}}\eta_{j}A_{j}+\sum_{e\in\mathcal{E}}\xi_{e}B_{e}:\lambda\in\Delta_{|\mathcal{L}|},\eta\in\mathbb{R}^{|\mathcal{J}^{k}|}_{+},\xi\in\mathbb{R}^{|\mathcal{E}|}\}$, where $g_{i}$ represents the gradient of the linear function $f_{i},~i\in\mathcal{L}$. Furthermore, since  $\mathcal{J}^{k}\subset\mathcal{J}$, we can infer that $|\{C^{k}\}|$ is finite. Consequently, we can define $\epsilon:=\min_{x\in \{C^{k}\}}\|x\|>0$. By direct calculation, we have
	\begin{equation}\label{e5.13}\footnotesize
		\begin{aligned}
			&\|d^{k}\|=\nm{\sum\limits_{i\in[m]}\lambda^{k}_{i}\frac{\nabla f_{i}(x^{k})}{L_{i}}+\sum\limits_{j\in\mathcal{J}^{k+1}}\eta^{k}_{j}A_{j}+\sum\limits_{e\in\mathcal{E}}\xi^{k}_{e}B_{e}}\\
			&\geq\nm{\sum\limits_{i\in\mathcal{L}}\lambda^{k}_{i}\frac{g_{i}}{L_{i}}+\sum\limits_{j\in\mathcal{J}^{k+1}}\eta^{k}_{j}A_{j}+\sum\limits_{e\in\mathcal{E}}\xi^{k}_{e}B_{e}}-\nm{\sum\limits_{i\in[m]\setminus\mathcal{L}}\lambda^{k}_{i}\frac{\nabla f_{i}(x^{k})}{L_{i}}}\\
			&=\frac{\sum\limits_{i\in\mathcal{L}}\lambda^{k}_{i}}{\alpha_{\min}}\nm{\sum\limits_{i\in\mathcal{L}}\frac{\lambda^{k}_{i}}{\sum\limits_{i\in\mathcal{L}}\lambda^{k}_{i}}g_{i}+\sum\limits_{j\in\mathcal{J}^{k+1}}\frac{\alpha_{\min}\eta^{k}_{j}}{\sum\limits_{i\in\mathcal{L}}\lambda^{k}_{i}}A_{j}+\sum\limits_{e\in\mathcal{E}}\frac{\alpha_{\min}\xi^{k}_{e}}{\sum\limits_{i\in\mathcal{L}}\lambda^{k}_{i}}B_{e}}-\nm{\sum\limits_{i\in[m]\setminus\mathcal{L}}\lambda^{k}_{i}\frac{\nabla f_{i}(x^{k})}{L_{i}}}\\
			&\geq\frac{\epsilon\sum\limits_{i\in\mathcal{L}}\lambda^{k}_{i}}{\alpha_{\min}} -\nm{\sum\limits_{i\in[m]\setminus\mathcal{L}}\lambda^{k}_{i}\frac{\nabla f_{i}(x^{k})}{L_{i}}},
		\end{aligned}
	\end{equation}
	where the second equality follows from the fact that $L_{i}=\alpha_{\min}$ for all $i\in\mathcal{L}$, and the last inequality is given by the definition of $\epsilon$.
	By simple calculation, we have 
	\begin{align*}
		\frac{1}{2}\|d^{k}\|^{2}&=\frac{1}{2}\nm{\sum\limits_{i\in[m]}\lambda^{k}_{i}\frac{\nabla f_{i}(x^{k})}{L_{i}}+\sum\limits_{j\in\mathcal{J}}\eta^{k}_{j}A_{j}+\sum\limits_{e\in\mathcal{E}}\xi^{k}_{e}B_{e}}^{2}\\
		&\leq\frac{1}{2}\nm{\sum\limits_{i\in[m]}\lambda^{k}_{i}\frac{\nabla f_{i}(x^{k})}{L_{i}}+\sum\limits_{j\in\mathcal{J}}\eta^{k}_{j}A_{j}+\sum\limits_{e\in\mathcal{E}}\xi^{k}_{e}B_{e}}^{2}- \sum\limits_{j\in\mathcal{J}}\eta^{k}_{j}\left(\dual{A_{j},x^{k}}-a_{j}\right) \\
		&\leq\frac{1}{2}\max\limits_{i\in[m]\setminus\mathcal{L}}\nm{\frac{\nabla f_{i}(x^{k})}{L_{i}}}^{2},
	\end{align*}
	where the second inequality follows by the non-negativeness of $\eta^{k}_{j}$ and $a_{j}-\dual{A_{j},x^{k}}$, and the last inequality is due to the fact that $(\lambda^{k},\eta^{k},\xi^{k})$ is a solution of the dual problem.
	This together with (\ref{e5.13}) implies
	$$\frac{\epsilon\sum\limits_{i\in\mathcal{L}}\lambda^{k}_{i}}{\alpha_{\min}}\leq\max\limits_{i\in[m]\setminus\mathcal{L}}\nm{\frac{\nabla f_{i}(x^{k})}{L_{i}}}+\nm{\sum\limits_{i\in[m]\setminus\mathcal{L}}\lambda^{k}_{i}\frac{\nabla f_{i}(x^{k})}{L_{i}}}\leq2\max\limits_{i\in[m]\setminus\mathcal{L}}\nm{\frac{\nabla f_{i}(x^{k})}{L_{i}}}.$$
	Notice that $f_{i}$ is strongly convex for $i\in[m]\setminus\mathcal{L}$, then we deduce that $\mathcal{L}_{F}(F(x^{0}))$ is bounded. Denoting $R:=\max\{\nm{x-y}:x,y\in\mathcal{L}_{F}(F(x^{0}))\}$, and utilizing the $L_{i}$-smoothness of $f_{i},i\in[m]\setminus\mathcal{L}$, we derive an upper bound of $\|\nabla f_{i}(x^{k})\|$:
	\begin{align*}
		\|\nabla f_{i}(x^{k})\|&\leq\|\nabla f_{i}(x^{0})\|+\|\nabla f_{i}(x^{k})-\nabla f_{i}(x^{0})\|\leq \|\nabla f_{i}(x^{0})\| + L_{i}R.
	\end{align*}
	Therefore, we obtain
	\begin{equation}\label{dse}
		\sum\limits_{i\in\mathcal{L}}\lambda^{k}_{i}\leq\max\limits_{i\in[m]\setminus\mathcal{L}}\frac{2(\|\nabla f_{i}(x^{0})\| + L_{i}R)}{\epsilon L_{i}}\alpha_{\min}.
	\end{equation}
	Substituting the preceding bound into (\ref{lce}), the desired result follows.
\end{proof}
\vspace{1mm}

\vspace{1mm}
\begin{remark}
	From Proposition \ref{pl1}, the sum of dual variables of linear objectives $\sum_{i\in\mathcal{L}}\lambda^{k}_{i}$ is dominated by $\alpha_{\min}$. Consequently, the impact of the linear objectives can be effectively mitigated by employing a small scaling parameter $\alpha_{\min}$ in the direction-finding subproblems. 
\end{remark}
\vspace{1mm}
\begin{remark}
	The assumption $x^{*}$ is not a weakly Pareto solution of (\ref{MLP}) seems restrictive. In practice, the Pareto set of (\ref{LCMOP}) can be a $(m-1)$-dimensional manifold, and the Pareto set of (\ref{MLP}) can be a $(|\mathcal{L}|-1)$-dimensional sub-manifold within the $(m-1)$-dimensional manifold. As a result, for a random initial point $x^{0}$, the probability that $x^{*}$ is not a weakly Pareto solution of (\ref{MLP}) can be $1$. Additionally, when $|\mathcal{J}|=0$, meaning (\ref{MLP}) has no inequality constraints, then the problem either has no weakly Pareto solution or every feasible point is a weakly Pareto solution. In other words, SPGMO converges linearly for equality constrained and unconstrained (\ref{LCMOP}). 
\end{remark}
\vspace{1mm}
\par In the following, we also analyze the convergence rate of PGMO for (\ref{LCMOP}).
\vspace{1mm}
\begin{proposition}\label{pl2}
	Let $\{x^{k}\}$ be the sequence generated by Algorithm \ref{pgmo} for (\ref{LCMOP}) and $x^{*}$ be the weakly Pareto solution satisfies $F(x^{*})\preceq F(x^{k})$ for all $k\geq0$. If $x^{*}$ is not a weakly Pareto solution of (\ref{MLP}), then there exists $\kappa\in(0,1)$ such that $\|x^{k}-x^{*}\|\leq\kappa^{k}\|x^{0}-x^{*}\|.$
\end{proposition}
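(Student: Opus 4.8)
The plan is to follow the blueprint of Proposition~\ref{pl1}, but to replace the role played there by the small scaling $\alpha_{\min}$ with the asymptotic vanishing of the search direction. First I would record a refined one-step estimate for Algorithm~\ref{pgmo}. Running Algorithm~\ref{pgmo} with parameter $\ell\ge L_{\max}$ on (\ref{LCMOP}) is, up to a global rescaling of the subproblem objective, Algorithm~\ref{spgmo} with the uniform choice $L_i\equiv\ell$; hence the analysis behind the first inequality in (\ref{nsc}) carries over and produces $\lambda^{k}\in\Delta_m$ with
\[
\|x^{k+1}-x^{*}\|\le\sqrt{1-\sum\nolimits_{i\in[m]}\lambda^{k}_{i}\tfrac{\mu_i}{\ell}}\,\|x^{k}-x^{*}\|=\sqrt{1-\sum\nolimits_{i\in[m]\setminus\mathcal{L}}\lambda^{k}_{i}\tfrac{\mu_i}{\ell}}\,\|x^{k}-x^{*}\|,
\]
the linear indices dropping out because $\mu_i=0$ for $i\in\mathcal{L}$. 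Thus the whole statement reduces to producing a constant $\delta>0$ with $\sum_{i\in[m]\setminus\mathcal{L}}\lambda^{k}_{i}\ge\delta$ for all $k$, since then $\kappa:=\sqrt{1-\delta\min_{i\in[m]\setminus\mathcal{L}}\mu_i/\ell}\in(0,1)$ is the desired rate.

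The core is therefore a uniform lower bound on $\sum_{i\in[m]\setminus\mathcal{L}}\lambda^k_i$. Writing the KKT conditions of the (unscaled) subproblem for (\ref{LCMOP}) and using (\ref{cse}) together with $x^{k+1}=x^k+d^k$ gives $\ell d^{k}=-\big(\sum_{i\in[m]}\lambda^{k}_{i}\nabla f_i(x^k)+\sum_{j\in\mathcal{J}^{k+1}}\eta^{k}_{j}A_j+\sum_{e\in\mathcal{E}}\xi^{k}_{e}B_e\big)$. Exactly as in Proposition~\ref{pl1}, the hypothesis that $x^{*}$ is not a weakly Pareto solution of (\ref{MLP}) yields $0\notin C^{k}$, and the finiteness of $\{\mathcal{J}^k\}$ yields $\epsilon:=\min_{v\in\{C^k\}}\|v\|>0$. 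Setting $s_k:=\sum_{i\in\mathcal{L}}\lambda^k_i$ and factoring out $s_k$ as in (\ref{e5.13}), the triangle inequality gives $\ell\|d^k\|\ge\epsilon s_k-\|\sum_{i\in[m]\setminus\mathcal{L}}\lambda^k_i\nabla f_i(x^k)\|\ge\epsilon s_k-(1-s_k)M$, where $M:=\max_{i\in[m]\setminus\mathcal{L}}(\|\nabla f_i(x^0)\|+L_iR)$ bounds the strongly convex gradients over the bounded level set $\mathcal{L}_F(F(x^0))$. Rearranging produces the pivotal inequality $1-s_k\ge(\epsilon-\ell\|d^k\|)/(\epsilon+M)$.

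It remains to force $\ell\|d^k\|$ below $\epsilon$. Here I would invoke the standard sufficient-decrease property of PGMO: since $\ell\ge L_{\max}$ and $x^{k+1}$ is feasible, $F_i(x^{k+1})\le F_i(x^k)-\tfrac{\ell}{2}\|d^k\|^2$ for every $i$, while $F(x^{*})\preceq F(x^k)$ bounds each $F_i$ from below; a telescoping sum then gives $\sum_k\|d^k\|^2<\infty$, so $d^k\to0$. Consequently there is $K_0$ with $\ell\|d^k\|\le\epsilon/2$ for $k\ge K_0$, whence $1-s_k\ge\delta:=\epsilon/\big(2(\epsilon+M)\big)>0$ and every such step contracts by the factor $\kappa$ above; the finitely many earlier steps are non-expansive (their factors being $\le 1$), so combining the geometric tail with the monotonicity of $\{\|x^k-x^*\|\}$ yields geometric decay, i.e.\ the claimed estimate for a suitable $\kappa\in(0,1)$. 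The main obstacle is exactly this lower bound on $\sum_{i\in[m]\setminus\mathcal{L}}\lambda^k_i$: in Proposition~\ref{pl1} a small $\alpha_{\min}$ forces $s_k\le c_1\alpha_{\min}$ uniformly via (\ref{dse}), whereas PGMO has no such scaling knob, so one must instead lean on $d^k\to 0$ together with $0\notin C^k$ and the finiteness of the active-set configurations to push $s_k$ strictly below $1$, and then convert the resulting tail contraction into the stated bound.
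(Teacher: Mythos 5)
Your argument is correct and follows essentially the same route as the paper's proof: the same one-step contraction $\|x^{k+1}-x^*\|\le\sqrt{1-\sum_i\lambda^k_i\mu_i/L_{\max}}\,\|x^k-x^*\|$, the same constant $\epsilon=\min_{v\in\{C^k\}}\|v\|>0$ from the finitely many active-set cones, the same triangle-inequality bound relating $\sum_{i\in\mathcal{L}}\lambda^k_i$ to $\|d^k\|$, and the same use of $d^k\to0$ to force that sum strictly below $1$ for large $k$, yielding an $R$-linear tail. Your only departure is that you justify $d^k\to0$ explicitly via sufficient decrease and telescoping, where the paper simply cites convergence to a weakly Pareto point; otherwise the two proofs coincide.
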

\vspace{1mm}
\begin{proof}
	Setting $L_{i}=L_{\max}$ in SPGMO, it coincides with PGMO. Consequently, (\ref{lce}) collapses to
	\begin{small}
		\begin{equation}\label{e5.15}
			\|x^{k+1}-x^{*}\|\leq\sqrt{1-\sum\limits_{i\in[m]}\lambda^{k}_{i}\frac{\mu_{i}}{L_{\max}}}\|x^{k}-x^{*}\|.
		\end{equation}
	\end{small}
	Substituting $L_{i}=L_{\max}$ into (\ref{e5.13}), it follows that
	\begin{small}
		\begin{equation*}
			\begin{aligned}
				\epsilon\sum\limits_{i\in\mathcal{L}}\lambda^{k}_{i}&\leq\nm{\sum\limits_{i\in[m]\setminus\mathcal{L}}\lambda^{k}_{i}\nabla f_{i}(x^{k})}+L_{\max}\|d^{k}\|\\
				&\leq\left(\sum\limits_{i\in[m]\setminus\mathcal{L}}\lambda^{k}_{i}\right)\max\limits_{i\in[m]\setminus\mathcal{L}}\left\{\|\nabla f_{i}(x^{0})\| + L_{i}R\right\}+L_{\max}\|d^{k}\|.
			\end{aligned}
		\end{equation*}
	\end{small}
	Rearranging the above inequality, we have
	$$\sum\limits_{i\in\mathcal{L}}\lambda^{k}_{i}\leq\frac{\max\limits_{i\in[m]\setminus\mathcal{L}}\left\{\|\nabla f_{i}(x^{0})\| + L_{i}R\right\}+L_{\max}\nm{d^{k}}}{\max\limits_{i\in[m]\setminus\mathcal{L}}\left\{\|\nabla f_{i}(x^{0})\| + L_{i}R\right\}+\epsilon}.$$
	Since $\{x^{k}\}$ converges to a weakly Pareto point, it follows that $d^{k}\rightarrow0$. Then there exists $K>0$ such that $\|d^{k}\|\leq\frac{\epsilon}{2L_{\max}}$ for all $k\geq K$. This implies that $$\sum\limits_{i\in\mathcal{L}}\lambda^{k}_{i}\leq c_{2}~{\rm for~all}~ k\geq K,$$
	where $c_{2}:=\frac{\max\limits_{i\in[m]\setminus\mathcal{L}}\left\{\|\nabla f_{i}(x^{0})\| + L_{i}R\right\}+\frac{\epsilon}{2}}{\max\limits_{i\in[m]\setminus\mathcal{L}}\left\{\|\nabla f_{i}(x^{0})\| + L_{i}R\right\}+\epsilon}<1.$ By substituting the above bound into (\ref{e5.15}), we obtain
	$$\|x^{k+1}-x^{*}\|\leq\sqrt{1-(1-c_{2})\frac{\min\limits_{i\in[m]\setminus\mathcal{L}}\mu_{i}}{L_{\max}}}\|x^{k}-x^{*}\|~{\rm for~all}~ k\geq K.$$
	Without loss of generality, there exists $\kappa\in(0,1)$ such that $\|x^{k}-x^{*}\|\leq\kappa^{k}\|x^{0}-x^{*}\|.$
\end{proof}
\vspace{1mm}
\par From Proposition \ref{pl2}, PGMO only achieves $R$-linear convergence. The following example illustrates that $\sum_{i\in\mathcal{L}}\lambda^{k}_{i}=1$ at the early stage in the iterations. In other words, we can not obtain $Q$-linear convergence of the method for (\ref{LCMOP}). Beside, the rate of $R$-linear convergence can be very slow.
\vspace{1mm}
\begin{example}
	Consider the following multiobjective optimization problem:
	\begin{align*}
		\min\limits_{x\in \mathcal{X}}\left( f_{1}(x), f_{2}(x)\right),
	\end{align*}
	where $f_{1}(x)=\frac{1}{2}x_{1}^{2}+\frac{1}{2}x_{2}^{2}$, $f_{2}(x)=cx_{1}$ ($c$ is a relative small positive constant), and $\mathcal{X}=\{(x_{1},x_{2}):x_{1}\geq 0,x_{2}=0\}.$ By simple calculations, we have
	$$\nabla f_{1}(x)=(x_{1},x_{2})^{T},~\nabla f_{2}(x)=(c,0)^{T},~L_{\max}=1$$
	and the unique Pareto solution is $(0,0)^{T}$. Given a feasible $x^{0}$, at the early stage ($x^{k}_{1}>c$) of PGMO without line search, we have $\lambda_{2}^{k}=1$, and $d^{k}=(-c,0)$. At this stage, we have ${\nm{x^{k+1}-x^{*}}}/{\nm{x^{k}-x^{*}}}={(x^{k}_{1}-c)}/{x^{k}_{1}}$, which tends to $1$ for sufficient small $c$. 
\end{example}

\section{Scaled proximal gradient method with unknown smoothness parameters for MCOPs}\label{sec5}
In the previous section, global smoothness parameters were employed to scale each objective; however, these parameters are often unknown and can be overly conservative. This section is devoted to a scaled proximal gradient method with unknown smoothness parameters.

\subsection{Estimating the local smoothness parameters}
Firstly, we propose a backtracking method to estimate the local smoothness parameters. 
\begin{algorithm}  
	\caption{{\ttfamily{\small Backtracking}}}\label{alg3}
	\begin{algorithmic}[1]  
		\REQUIRE{$0<\alpha^{k}_{i}\leq L_{i},s_{i}=0,i\in[m],\tau>1$}
		\REPEAT{
			\STATE{Update $\alpha_{i}^{k}=\tau^{s_{i}}\alpha_{i}^{k},i\in[m]$}
			\STATE{Update $x^{k+1}:=	\mathop{\arg\min}\limits_{x\in\mathbb{R}^{n}}\max\limits_{i\in[m]}\left\{\frac{
					\left\langle\nabla f_{i}(x^{k}),x-x^{k}\right\rangle + g_{i}(x)-g_{i}(x^{k})}{\alpha^{k}_{i}}+\frac{1}{2}\|x-x^{k}\|^{2}\right\}$}
			\FOR{$i=1,\cdots,m$}
			\IF{$f_{i}(x^{k+1})-f_{i}(x^{k})>\left\langle\nabla f_{i}(x^{k}),x^{k+1}-x^{k}\right\rangle+\frac{\alpha^{k}_{i}}{2}\|x^{k+1}-x^{k}\|^{2} $}
			\STATE{Update $s_{i}=s_{i}+1$}
			\ENDIF
			\ENDFOR}
		\UNTIL{$f_{i}(x^{k+1})-f_{i}(x^{k})\leq\left\langle\nabla f_{i}(x^{k}),x^{k+1}-x^{k}\right\rangle+\frac{\alpha^{k}_{i}}{2}\|x^{k+1}-x^{k}\|^{2},~i\in[m] $}
	\end{algorithmic}
\end{algorithm}

Since $f_{i}$ is $L_{i}$-smooth for $i\in[m]$, the {\it {repeat} loop} terminates in a finite number of iterations, and $\alpha_{i}^{k}<\tau L_{i},~i\in[m]$. A notable advantage of this approach is its ability to adapt to the local smoothness based on the trajectory of the iterations. However, the use of backtracking increases the computational cost per iteration due to the repeated solving of subproblems.
\subsection{Line search}
To avoid solving a subproblem in each backtracking procedure, we apply the Armijo line search proposed by Tanabe et al. \cite{TFY2019}.
\begin{algorithm}  
	\caption{\ttfamily Armijo\_line\_search}\label{alg1} 
	\begin{algorithmic}[1]
		\REQUIRE{ $x^{k}\in\mathbb{R}^{n},d^{k}\in\mathbb{R}^{n},Jf(x^{k})\in\mathbb{R}^{m\times n},\sigma\in(0,1), t_{k}=1$}
		\WHILE{$F(x^{k}+t_{k} d^{k})- F(x^{k}) \not\preceq t_{k}\sigma  (Jf(x^{k})d^{k}+g(x^{k}+ d^{k})-g(x^{k}))$}
		\STATE{Update $t_{k}:=  {t_{k}}/{2}$ } 
		\ENDWHILE
		\RETURN{$t_{k}$}
	\end{algorithmic}
\end{algorithm}

 The scaled proximal gradient method with line search for MCOPs is described as follows.
\begin{algorithm}  
	\caption{{\ttfamily{Scaled\_proximal\_gradient\_method\_with\_line\_search\_for\_MCOPs}}}\label{alg2} 
	\begin{algorithmic}[1]
		\REQUIRE{$x^{0}\in\mathbb{R}^{n}$}
		\FOR{$k=0,\cdots$}
		\STATE{Update $\alpha^{k}\in\mathbb{R}^{m}_{++}$}
		\STATE{Update $d^{k}:=	\mathop{\arg\min}\limits_{x\in\mathbb{R}^{n}}\max\limits_{i\in[m]}\left\{\frac{
				\left\langle\nabla f_{i}(x^{k}),d\right\rangle + g_{i}(x^{k}+d)-g_{i}(x^{k})}{\alpha^{k}_{i}}+\frac{1}{2}\|d\|^{2}\right\}$}
		\IF{$d^{k}=0$}
		\RETURN{$x^{k}$  }
		\ELSE{
			\STATE{Update $t_{k}:=$ {\ttfamily Armijo\_line\_search}$\left(x^{k},d^{k},Jf(x^{k})\right)$}
			\STATE{Update $x^{k+1}:= x^{k}+t_{k}d^{k}$}}
		\ENDIF
		\ENDFOR
	\end{algorithmic}
\end{algorithm}

It is worth noting that we don't specify how to select $\alpha^{k}$ in Algorithm \ref{alg2}. The direct question arises: Does $\alpha^{k}$ affect the performance of Algorithm \ref{alg2}? In the following, we will examine the role that $\alpha^{k}$ plays in the Algorithm \ref{alg2}.

\par Firstly, we prove that $d^{k}$ is a descent direction for $F$. 
\vspace{1mm}
\begin{lemma}
	For the $d^{k}$ in Algorithm \ref{alg2}, we have
	\begin{equation}\label{E6}
		\left\langle\nabla f_{i}(x^{k}),d^{k}\right\rangle + g_{i}(x^{k}+ d^{k})-g_{i}(x^{k})\leq-\alpha_{i}^{k}\|d^{k}\|^{2}~{\rm for~all}~ i\in[m].
	\end{equation}
\end{lemma}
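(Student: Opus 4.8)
The plan is to use that $d^{k}$ is the \emph{global} minimizer of the scaled direction-finding subproblem together with the $1$-strong convexity of that subproblem's objective; the strong convexity is exactly what upgrades the trivial bound into the sharp constant appearing in (\ref{E6}). First I would name the objective $\omega(d):=\max_{i\in[m]}\theta_{i}(d)$, where
\begin{equation*}
	\theta_{i}(d):=\frac{\dual{\nabla f_{i}(x^{k}),d}+g_{i}(x^{k}+d)-g_{i}(x^{k})}{\alpha^{k}_{i}}+\frac{1}{2}\nm{d}^{2},
\end{equation*}
so that $d^{k}=\arg\min_{d}\omega(d)$. Since $\omega(d)-\tfrac{1}{2}\nm{d}^{2}=\max_{i\in[m]}\big(\theta_{i}(d)-\tfrac{1}{2}\nm{d}^{2}\big)$ is a pointwise maximum of convex functions (each $g_{i}$ is convex and $\alpha^{k}_{i}>0$, so each difference reduces to a scaled convex term), the function $\omega$ is $1$-strongly convex.

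Next I would exploit optimality. Because $d^{k}$ minimizes $\omega$, we have $0\in\partial\omega(d^{k})$, and the strong-convexity inequality evaluated at the test point $d=0$ gives $\omega(0)\geq\omega(d^{k})+\tfrac{1}{2}\nm{d^{k}}^{2}$. A direct substitution shows $\omega(0)=0$, whence $\omega(d^{k})\leq-\tfrac{1}{2}\nm{d^{k}}^{2}$. This refined estimate is the heart of the argument, and the step I expect to require the most care: the naive bound $\omega(d^{k})\leq\omega(0)=0$ would only produce the coefficient $\alpha^{k}_{i}/2$, whereas the strong-convexity gain of $\tfrac{1}{2}\nm{d^{k}}^{2}$ is precisely what yields the full coefficient $\alpha^{k}_{i}$ claimed in the lemma.

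To conclude, I would specialize to each index. Since $\theta_{i}(d^{k})\leq\omega(d^{k})\leq-\tfrac{1}{2}\nm{d^{k}}^{2}$ for every $i\in[m]$, subtracting $\tfrac{1}{2}\nm{d^{k}}^{2}$ from both sides and then multiplying through by $\alpha^{k}_{i}>0$ produces exactly (\ref{E6}). The only delicate point is the strong-convexity step above; the remainder is routine algebraic rearrangement. Moreover, since positivity of $\alpha^{k}_{i}$ is the sole property of the scaling that enters the proof, the conclusion holds for \emph{any} choice of $\alpha^{k}$ made in Algorithm \ref{alg2}, which is what makes $d^{k}$ a valid descent direction before the line search is invoked.
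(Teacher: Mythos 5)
Your argument is correct and is essentially the argument the paper relies on: the paper simply defers to the proof of Lemma~4.1 in \cite{TFY2019}, which establishes the same bound by exploiting the optimality of $d^{k}$ together with the quadratic term in the subproblem (there via a convex-combination/limit argument that is exactly the $1$-strong-convexity inequality you invoke directly). Your observation that only $\alpha^{k}_{i}>0$ is used, so the bound holds for any scaling choice, matches the role the lemma plays in the paper.
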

\begin{proof}
	The assertion can be obtained by using the same arguments as in the proof of \cite[Lemma 4.1]{TFY2019}.
\end{proof}

Next, we give a lower bound of stepsize in each iteration. 
\vspace{1mm}
\begin{lemma}
	Assume $f_{i}$ is $L_{i}$-smooth for $i\in[m]$, then the $k$-th stepsize generated by Algorithm \ref{alg1} satisfies $t_{k}\geq t_{k}^{\min}:=\min\left\{\min_{i\in[m]}\{{(1-\sigma)\alpha_{i}^{k}}/{L_{i}}\},1\right\}$.
\end{lemma}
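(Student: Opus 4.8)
The plan is to show that the Armijo test in Algorithm \ref{alg1} is passed by \emph{every} trial stepsize that is small enough, to make this threshold explicit, and then to convert it into a lower bound on the accepted $t_k$ by exploiting the halving structure of the backtracking loop. Fix $k$ and write $\Theta_i:=\dual{\nabla f_i(x^k),d^k}+g_i(x^k+d^k)-g_i(x^k)$, so that the $i$-th Armijo inequality at stepsize $t$ reads $F_i(x^k+td^k)-F_i(x^k)\le t\sigma\Theta_i$. I may assume $d^k\neq0$ (otherwise the algorithm has already returned $x^k$), so the descent inequality (\ref{E6}) gives $\Theta_i\le-\alpha_i^k\nm{d^k}^2<0$ for every $i\in[m]$.

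First I would establish the core one-step estimate. For any $t\in(0,1]$, the $L_i$-smoothness of $f_i$ yields $f_i(x^k+td^k)-f_i(x^k)\le t\dual{\nabla f_i(x^k),d^k}+\tfrac{L_it^2}{2}\nm{d^k}^2$, while writing $x^k+td^k=(1-t)x^k+t(x^k+d^k)$ as a convex combination (this is where $t\le1$ is essential) and using convexity of $g_i$ gives $g_i(x^k+td^k)-g_i(x^k)\le t\,(g_i(x^k+d^k)-g_i(x^k))$. Adding these and recalling $F_i=f_i+g_i$ produces $F_i(x^k+td^k)-F_i(x^k)\le t\Theta_i+\tfrac{L_it^2}{2}\nm{d^k}^2$. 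Hence the $i$-th Armijo inequality holds as soon as $t\Theta_i+\tfrac{L_it^2}{2}\nm{d^k}^2\le t\sigma\Theta_i$, i.e., after dividing by $t>0$, as soon as $\tfrac{L_it}{2}\nm{d^k}^2\le-(1-\sigma)\Theta_i$. Feeding in $-\Theta_i\ge\alpha_i^k\nm{d^k}^2$ from (\ref{E6}), a sufficient condition is $t\le 2(1-\sigma)\alpha_i^k/L_i$. Taking the minimum over $i$, every trial stepsize $t\in(0,1]$ with $t\le\beta:=\min_{i\in[m]}2(1-\sigma)\alpha_i^k/L_i$ satisfies all $m$ Armijo inequalities, so the loop accepts it.

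Next I would turn this acceptance threshold into the claimed bound via the halving. The line search tests $t=1,\tfrac12,\tfrac14,\dots$ and returns the first accepted value $t_k$. If $t_k=1$ the bound is immediate since $t_k^{\min}\le1$. Otherwise $2t_k$ was rejected, so by the sufficiency just established $2t_k>\beta$, whence $t_k>\beta/2=\min_{i\in[m]}(1-\sigma)\alpha_i^k/L_i\ge t_k^{\min}$. In either case $t_k\ge t_k^{\min}=\min\{\min_{i\in[m]}(1-\sigma)\alpha_i^k/L_i,\,1\}$, as required; finiteness of the loop follows since $\beta>0$.

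The step I expect to be most delicate is this last bookkeeping: the smoothness/convexity estimate only certifies acceptance for $t\le\beta$ with the \emph{factor two} $\beta=\min_{i\in[m]} 2(1-\sigma)\alpha_i^k/L_i$, and it is precisely the factor $2$ lost in the final halving that cancels it and lands exactly on $\min_{i\in[m]}(1-\sigma)\alpha_i^k/L_i$. I would therefore be careful to phrase the argument through the last \emph{rejected} trial $2t_k$ rather than the accepted $t_k$, and to keep the restriction $t\le1$ visible throughout, since it is needed both for the convex-combination bound on $g_i$ and to guarantee that the line search never probes stepsizes where that bound fails.
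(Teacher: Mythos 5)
Your proof is correct and takes essentially the same approach as the paper's: both combine the $L_i$-smoothness of $f_i$ with the convexity of $g_i$ (using $2t_k\le 1$) to bound $F_i(x^k+2t_kd^k)-F_i(x^k)$, invoke inequality (\ref{E6}), and extract the lower bound from the last rejected trial $2t_k$. The only difference is organizational — you first characterize the set of accepted stepsizes and then pass to the rejected trial, while the paper argues directly from the rejection inequality — and your treatment of the $t_k=1$ case and of the cancelling factor of two is, if anything, slightly more explicit.
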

\vspace{1mm}
\begin{proof} It is sufficient to prove $t_{k}<1$, then backtracking is conducted, leading to the inequality:
	\begin{equation}\label{E4.4}
		F_{i}\left(x^{k}+2{t_{k}}d^{k}\right)-F_{i}(x^{k})>2\sigma{t_{k}}(\left\langle\nabla f_{i}(x^{k}),d^{k}\right\rangle + g_{i}(x^{k}+ d^{k})-g_{i}(x^{k}))
	\end{equation}
	for some $i\in[m]$. Since $f_{i}$ is $L_{i}$-smooth for $i\in[m]$, we can derive the following inequalities:
	\begin{equation}\label{up}
		\begin{aligned}
			&~~~F_{i}\left(x^{k}+2{t_{k}}d^{k}\right)-F_{i}(x^{k})\\
			&\leq2{t_{k}}\left\langle\nabla f_{i}(x^{k}),d^{k}\right\rangle + g_{i}(x^{k}+2t_{k}d^{k}) - g_{i}(x^{k})+ \frac{L_{i}}{2}\left\|2t_{k}d^{k}\right\|^{2}\\
			&\leq2t_{k}(\left\langle\nabla f_{i}(x^{k}),d^{k}\right\rangle + g_{i}(x^{k}+ d^{k})-g_{i}(x^{k}))+\frac{L_{i}}{2}\left\|2t_{k}d^{k}\right\|^{2},
		\end{aligned}
	\end{equation}
	where the second inequality follows from the convexity of $g_{i}$ and the fact that $2t_{k}\in(0,1]$.  Combining this inequality with (\ref{E4.4}), we obtain
	$$(\sigma - 1)(\left\langle\nabla f_{i}(x^{k}),d^{k}\right\rangle + g_{i}(x^{k}+ d^{k})-g_{i}(x^{k}))\leq{L_{i}t_{k}}\left\|d^{k}\right\|^{2}$$
	for some $i\in[m]$. Utilizing (\ref{E6}), we arrive at
	\begin{equation}\label{et}
		t_{k}\geq\frac{(1-\sigma)\alpha^{k}_{i}}{L_{i}}
	\end{equation}
	for some $i\in[m]$, it holds that $t_{k}\geq t_{\min}$. This completes the proof.
\end{proof}

Before presenting the convergence analysis of Algorithm \ref{alg2}, we introduce two types of merit functions for (\ref{MCOP}) that quantify the gap between the current point and the optimal solution. 
\begin{equation}\label{u}
	u_{0}^{\alpha}(x):=\sup\limits_{y\in\mathbb{R}^{n}}\min\limits_{i\in[m]}\left\{\frac{F_{i}(x)-F_{i}(y)}{\alpha_{i}}\right\},
\end{equation}

\begin{equation}\label{v}
	w_{\ell}^{\alpha}(x):=\max\limits_{y\in\mathbb{R}^{n}}\min\limits_{i\in[m]}\left\{\frac{
		\left\langle\nabla f_{i}(x),x-y\right\rangle + g_{i}(x)-g_{i}(y)}{\alpha_{i}}-\frac{\ell}{2}\|x-y\|^{2}\right\},
\end{equation}
where $\alpha\in\mathbb{R}^{m}_{++}$, $\ell>0$.

\par We can demonstrate that $u_{0}^{\alpha}$ and $w_{\ell}^{\alpha}$ serve as merit functions, satisfying the criteria of weak Pareto and critical points, respectively.
\vspace{1mm}
\begin{proposition}
	Let $u_{0}^{\alpha}$ and $w_{\ell}^{\alpha}$ be defined as {\rm(\ref{u})} and {\rm(\ref{v})}, respectively. Then, for all $\alpha\in\mathbb{R}^{m}_{++}$ and $\ell>0$ the following statements hold.
	\begin{itemize}
		\item[$\mathrm{(i)}$]  $x\in\mathbb{R}^{n}$ is a weak Pareto solution of {\rm(\ref{MCOP})} if and only if $u_{0}^{\alpha}(x)=0$.
		\item[$\mathrm{(ii)}$]  $x\in\mathbb{R}^{n}$ is a Pareto critical point of {\rm(\ref{MCOP})} if and only if $w_{\ell}^{\alpha}(x)=0$.
	\end{itemize}
\end{proposition}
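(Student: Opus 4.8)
The plan is to treat the two parts separately, using in both the elementary observation that taking $y=x$ already gives $u_0^{\alpha}(x)\geq 0$ and $w_{\ell}^{\alpha}(x)\geq 0$; hence in each case it suffices to characterize exactly when the supremum/maximum attains its lower bound $0$. Throughout, the positivity of each $\alpha_i$ is what allows the scaling to leave the relevant sign structure intact, so the scaling is essentially harmless for part (i) and only interacts with the quadratic term in part (ii).

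For part (i), I would argue purely at the level of the min--max structure. Since $u_0^{\alpha}(x)\geq 0$ always, $u_0^{\alpha}(x)=0$ is equivalent to $\min_{i\in[m]}\frac{F_i(x)-F_i(y)}{\alpha_i}\leq 0$ for every $y\in\mathbb{R}^n$. Because each $\alpha_i>0$, this inner inequality holds for a given $y$ precisely when there is some $i$ with $F_i(y)\geq F_i(x)$. Hence $u_0^{\alpha}(x)=0$ says exactly that no $y$ satisfies $F_i(y)<F_i(x)$ simultaneously for all $i$, which by Definition \ref{def2} is the statement that $x$ is a weak Pareto solution. This part is routine and the scaling plays no role beyond preserving signs.

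For part (ii), I would first perform the change of variable $d=y-x$ to rewrite $w_{\ell}^{\alpha}(x)=-\theta_{\ell}^{\alpha}(x)$, where
$$\theta_{\ell}^{\alpha}(x):=\min_{d\in\mathbb{R}^{n}}\max_{i\in[m]}\left\{\frac{\dual{\nabla f_i(x),d}+g_i(x+d)-g_i(x)}{\alpha_i}+\frac{\ell}{2}\nm{d}^2\right\}$$
is the optimal value of the scaled direction-finding subproblem (the analogue of (\ref{sub2}) with weights $\alpha_i$ and regularization $\ell$). The inner function of $d$ is $\ell$-strongly convex, so its minimizer is unique and $\theta_{\ell}^{\alpha}(x)\leq 0$ with equality iff $d=0$ is that minimizer. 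Thus $w_{\ell}^{\alpha}(x)=0$ iff $d=0$ solves the subproblem, and it remains to match the latter with Pareto criticality, understood in the standard sense that no common descent direction exists, i.e. there is no $d$ with $\dual{\nabla f_i(x),d}+g_i'(x;d)<0$ for all $i$, where $g_i'(x;d)$ is the one-sided directional derivative.

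The crux --- and the main obstacle --- is handling the nonsmooth $g_i$ in this last equivalence while controlling the quadratic regularizer. If $x$ is not critical, there is a common descent direction $d$; here I would use convexity of $g_i$ (monotonicity of the difference quotients) to pass to small steps $td$, showing that the term linear in $t$ is strictly negative and, since it is $O(t)$ while the regularizer contributes only $O(t^2)$, that $\max_i\{\cdots\}<0$ for all sufficiently small $t>0$; finiteness of $[m]$ lets one choose a single such $t$, forcing $\theta_{\ell}^{\alpha}(x)<0$ and hence $w_{\ell}^{\alpha}(x)>0$. Conversely, if $d=0$ is not optimal there is $d$ with negative subproblem value, whence $\dual{\nabla f_i(x),d}+g_i(x+d)-g_i(x)<0$ for every $i$; the convexity bound $g_i(x+td)-g_i(x)\leq t(g_i(x+d)-g_i(x))$ together with the first-order expansion of the smooth $f_i$ yields $F_i(x+td)<F_i(x)$ for all $i$ and small $t$, exhibiting a common descent direction and contradicting criticality. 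Assembling these gives $w_{\ell}^{\alpha}(x)=0$ iff $x$ is Pareto critical. The delicate point to get right is the uniform-in-$i$ smallness of $t$ needed so the $O(t)$ descent term dominates the $O(t^2)$ penalty; everything else reduces to convexity estimates and the positivity of the $\alpha_i$.
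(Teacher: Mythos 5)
Your proof is correct and takes essentially the same route as the paper, which for this proposition simply defers to the corresponding merit-function results of Tanabe et al.; the sign/min--max argument for (i) and, for (ii), the reduction to the strongly convex direction-finding subproblem together with the $O(t)$-descent-versus-$O(t^{2})$-penalty estimate are precisely the standard arguments being invoked there, transplanted to the scaled setting with $\alpha\in\mathbb{R}^{m}_{++}$.
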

\vspace{1mm}
\begin{proof}
	The assertions (i) and (ii) can be obtained by using similar arguments as in the proofs of \cite[Theorem 3.1]{TFY2020} and \cite[Theorem 3.9]{TFY2020}, respectively.
\end{proof}
\vspace{1mm}
\begin{proposition}\label{p2}
	Let $w_{\ell}^{\alpha}$ be defined as  {\rm(\ref{v})}. If $0<\ell\leq r$, then
	$$w_{r}^{\alpha}(x)\leq w_{\ell}^{\alpha}(x)\leq\frac{r}{\ell}w_{r}^{\alpha}(x)~{\it for~all}~ x\in\mathbb{R}^{n}. $$
\end{proposition}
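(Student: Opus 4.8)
The plan is to handle the two inequalities separately, since they rely on quite different facts. The left inequality $w_{r}^{\alpha}(x)\le w_{\ell}^{\alpha}(x)$ is immediate from the definition (\ref{v}): because $0<\ell\le r$, for every fixed $y$ we have $-\frac{\ell}{2}\nm{x-y}^{2}\ge-\frac{r}{2}\nm{x-y}^{2}$, so the bracketed objective defining $w_{\ell}^{\alpha}(x)$ dominates the one defining $w_{r}^{\alpha}(x)$ pointwise in $y$; taking the maximum over $y$ on both sides preserves the inequality.

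The right inequality is the substantive part, and I would prove it by an interpolation argument exploiting the structure of the inner minimand. Writing
\[
\Phi_{x}(y):=\min_{i\in[m]}\frac{\dual{\nabla f_{i}(x),x-y}+g_{i}(x)-g_{i}(y)}{\alpha_{i}},
\]
so that $w_{\ell}^{\alpha}(x)=\max_{y}\{\Phi_{x}(y)-\frac{\ell}{2}\nm{x-y}^{2}\}$, I would first record two structural facts: (a) $\Phi_{x}$ is concave in $y$, being a pointwise minimum of functions that are affine plus concave in $y$ (the term $-g_{i}(y)$ is concave because $g_{i}$ is convex); and (b) $\Phi_{x}(x)=0$. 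Since the strongly concave objective is coercive — the $-\frac{\ell}{2}\nm{x-y}^{2}$ term dominates the at-most-affine growth of $\Phi_{x}$ — the maximum defining $w_{\ell}^{\alpha}(x)$ is attained at some point $y_{\ell}$.

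With these facts in place, the core step is to insert the interpolated point $y_{\theta}:=x+\theta(y_{\ell}-x)$ with $\theta:=\ell/r\in(0,1]$ into the definition of $w_{r}^{\alpha}(x)$. Concavity together with $\Phi_{x}(x)=0$ yields $\Phi_{x}(y_{\theta})\ge\theta\,\Phi_{x}(y_{\ell})$, while $\nm{x-y_{\theta}}^{2}=\theta^{2}\nm{x-y_{\ell}}^{2}$. Substituting and using $\theta=\ell/r$ gives
\[
w_{r}^{\alpha}(x)\ge\theta\,\Phi_{x}(y_{\ell})-\frac{r}{2}\theta^{2}\nm{x-y_{\ell}}^{2}=\frac{\ell}{r}\Bigl(\Phi_{x}(y_{\ell})-\frac{\ell}{2}\nm{x-y_{\ell}}^{2}\Bigr)=\frac{\ell}{r}\,w_{\ell}^{\alpha}(x),
\]
which rearranges to $w_{\ell}^{\alpha}(x)\le\frac{r}{\ell}w_{r}^{\alpha}(x)$. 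The main obstacle — indeed the only nonroutine point — is recognizing that the correct interpolation weight is exactly $\theta=\ell/r$, which makes the coefficient $\frac{r}{2}\theta^{2}$ collapse to $\frac{\ell}{r}\cdot\frac{\ell}{2}$ and factors the ratio $\ell/r$ out cleanly; the concavity and boundary value of $\Phi_{x}$ are then straightforward once the minimand is written out.
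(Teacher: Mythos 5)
Your proof is correct and follows essentially the same route as the paper, which simply defers to the argument of \cite[Theorem 4.2]{TFY2020}: the left inequality by monotonicity of the quadratic penalty, and the right inequality by evaluating the $w_{r}^{\alpha}$ objective at the convex combination $(1-\ell/r)x+(\ell/r)y_{\ell}$ of $x$ with the maximizer $y_{\ell}$, using concavity of the inner minimand in $y$ and the fact that it vanishes at $y=x$. Nothing further is needed.
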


\begin{proof}
	The assertion can be obtained by using the similar arguments as in the proof of \cite[Theorem 4.2]{TFY2020}.
\end{proof}

We are now in a position to establish the linear convergence of Algorithm \ref{alg2}.

\begin{theorem}\label{t4}
	Assume that $f_{i}$ is $L_{i}$-smooth and strongly convex with modulus $\mu_{i}>0$, for $i\in[m]$. Let $\{x^{k}\}$ be the sequence generated by Algorithm \ref{alg2}. Then, we have
	$$u_{0}^{\alpha^{k}}(x^{k+1})\leq\left(1-\sigma t^{\min}_{k}r_{k}\right)u_{0}^{\alpha^{k}}(x^{k})$$
	for all $k\geq0$, where $r_{k}:=\min\{\min_{i\in[m]}\{\mu_{i}/\alpha^{k}_{i}\},1\}$.
\end{theorem}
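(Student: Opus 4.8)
The plan is to monitor the one-step behaviour of the merit function $u_0^{\alpha^k}$ and to turn it into a contraction by invoking strong convexity. The pivotal observation is that the direction-finding subproblem in Algorithm \ref{alg2} is, up to a sign, the merit function $w_1^{\alpha^k}$: after the substitution $d=y-x^k$ one checks that the optimal value of the subproblem equals $-w_1^{\alpha^k}(x^k)$. Since $d^k$ attains this (max-type) optimum, every coordinate satisfies
$$\frac{\left\langle\nabla f_{i}(x^{k}),d^{k}\right\rangle + g_{i}(x^{k}+d^{k})-g_{i}(x^{k})}{\alpha^{k}_{i}}+\frac{1}{2}\|d^{k}\|^{2}\leq -w_{1}^{\alpha^{k}}(x^{k}),$$
whence $\left\langle\nabla f_{i}(x^{k}),d^{k}\right\rangle + g_{i}(x^{k}+d^{k})-g_{i}(x^{k})\leq-\alpha^{k}_{i}\,w_{1}^{\alpha^{k}}(x^{k})$ for every $i\in[m]$.

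First I would derive the one-step decrease. Feeding the coordinatewise bound above into the Armijo acceptance condition $F_i(x^{k+1})-F_i(x^k)\leq\sigma t_k(\left\langle\nabla f_i(x^k),d^k\right\rangle+g_i(x^k+d^k)-g_i(x^k))$, dividing by $\alpha^k_i$, inserting $-F_i(y)/\alpha^k_i$, and then taking $\min_{i\in[m]}$ followed by $\sup_{y}$ (the additive constant $-\sigma t_k w_1^{\alpha^k}(x^k)$ factors out of the minimum), I expect to obtain
$$u_{0}^{\alpha^{k}}(x^{k+1})\leq u_{0}^{\alpha^{k}}(x^{k})-\sigma t_{k}w_{1}^{\alpha^{k}}(x^{k}).$$

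Next I would convert $w_1^{\alpha^k}(x^k)$ into a multiple of $u_0^{\alpha^k}(x^k)$ through strong convexity. The $\mu_i$-strong convexity of $f_i$ yields, for every $y$,
$$\frac{F_{i}(x^{k})-F_{i}(y)}{\alpha^{k}_{i}}\leq\frac{\left\langle\nabla f_{i}(x^{k}),x^{k}-y\right\rangle + g_{i}(x^{k})-g_{i}(y)}{\alpha^{k}_{i}}-\frac{\mu_{i}}{2\alpha^{k}_{i}}\|x^{k}-y\|^{2}.$$
Since $\mu_i/\alpha^k_i\geq r_k$ for all $i$, taking $\min_{i\in[m]}$ and then $\sup_y$ gives $u_{0}^{\alpha^{k}}(x^{k})\leq w_{r_k}^{\alpha^{k}}(x^{k})$. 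As $r_k\leq1$, Proposition \ref{p2} (with $\ell=r_k$, $r=1$) provides $w_{r_k}^{\alpha^{k}}(x^{k})\leq\frac{1}{r_k}w_{1}^{\alpha^{k}}(x^{k})$, hence $w_{1}^{\alpha^{k}}(x^{k})\geq r_k\,u_{0}^{\alpha^{k}}(x^{k})$. Combining the two displays with $t_k\geq t^{\min}_k$ and the nonnegativity of $u_0^{\alpha^k}$ (take $y=x^k$) then delivers
$$u_{0}^{\alpha^{k}}(x^{k+1})\leq\left(1-\sigma t^{\min}_{k}r_{k}\right)u_{0}^{\alpha^{k}}(x^{k}).$$

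The main obstacle I anticipate is the careful bookkeeping in the one-step decrease, namely the identity relating the subproblem's optimal value to $w_1^{\alpha^k}$ and the extraction of the per-coordinate estimate, together with the legitimacy of interchanging $\min_i$ and $\sup_y$ so that the constant term separates cleanly. Once these are in place, the strong-convexity inequality and the appeal to Proposition \ref{p2} are routine, and only the uniform lower bound $\mu_i/\alpha^k_i\geq r_k$ and the step-size lower bound $t_k\geq t^{\min}_k$ remain to be substituted.
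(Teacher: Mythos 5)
Your proposal is correct and follows essentially the same route as the paper: both use the Armijo acceptance condition together with the identification of the subproblem's optimal value with $-w_{1}^{\alpha^{k}}(x^{k})$, then invoke strong convexity and Proposition \ref{p2} to get $w_{1}^{\alpha^{k}}(x^{k})\geq r_{k}\,u_{0}^{\alpha^{k}}(x^{k})$, and finally combine with $t_{k}\geq t_{k}^{\min}$. The only cosmetic difference is that you bound $\mu_{i}/\alpha_{i}^{k}$ below by $r_{k}$ coordinatewise, whereas the paper inserts $\min_{i}\{\mu_{i}/\alpha_{i}^{k}\}$ uniformly before applying Proposition \ref{p2}; the two are interchangeable.
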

\vspace{1mm}
\begin{proof}
	The Armijo line search satisfies
	$$F_{i}(x^{k+1})-F_{i}(x^{k})\leq t_{k}\sigma(\left\langle\nabla f_{i}(x^{k}),d^{k}\right\rangle+g_{i}(x^{k}+d^{k})-g_{i}(x^{k})).$$
	A direct calculation gives
	\begin{equation}\label{E15}
		\begin{aligned}
			&~~~\frac{F_{i}(x^{k+1})-F_{i}(x^{k})}{\alpha^{k}_{i}}\\
			&\leq t_{k}\sigma\left(\frac{\left\langle\nabla f_{i}(x^{k}),d^{k}\right\rangle+g_{i}(x^{k}+d^{k})-g_{i}(x^{k})}{\alpha^{k}_{i}}+\frac{1}{2}\|d^{k}\|^{2}\right)\\
			&\leq t_{k}\sigma\max\limits_{i\in[m]}\left\{\frac{\left\langle\nabla f_{i}(x^{k}),d^{k}\right\rangle+g_{i}(x^{k}+d^{k})-g_{i}(x^{k})}{\alpha^{k}_{i}}+\frac{1}{2}\|d^{k}\|^{2}\right\}\\
			&=-t_{k}\sigma w_{1}^{\alpha^{k}}(x^{k})\\
			&\leq -t^{\min}_{k}\sigma w_{1}^{\alpha^{k}}(x^{k}).
		\end{aligned}
	\end{equation}
	Furthermore, due to the strong convexity of $f_{i}$ for $i\in[m]$, we have
	\begin{equation}\label{low}
		\begin{aligned}&~~~
			\frac{F_{i}(x^{k})-F_{i}(x)}{\alpha^{k}_{i}}\\&\leq\frac{\left\langle\nabla f_{i}(x^{k}),x^{k}-x\right\rangle+g_{i}(x^{k})-g_{i}(x)}{\alpha^{k}_{i}}-\frac{\min_{i\in[m]}\{\mu_{i}/\alpha^{k}_{i}\}}{2}\|x-x^{k}\|^{2}
		\end{aligned}
	\end{equation}
	for all $i\in[m]$ and $x\in\mathbb{R}^{n}$.
	Taking the supremum and minimum  with respect to $x\in\mathbb{R}^{n}$ and $i\in [m]$ on both sides, respectively, we obtain
	\begin{align*}
		&~~~~\sup\limits_{x\in\mathbb{R}^{n}}\min\limits_{i\in[m]}\left\{\frac{F_{i}(x^{k})-F_{i}(x)}{\alpha^{k}_{i}}\right\}\\
		&\leq\sup\limits_{x\in\mathbb{R}^{n}}\min\limits_{i\in[m]}\left\{\frac{\left\langle\nabla f_{i}(x^{k}),x^{k}-x\right\rangle+g_{i}(x^{k})-g_{i}(x)}{\alpha^{k}_{i}}-\frac{\min_{i\in[m]}\{\mu_{i}/\alpha^{k}_{i}\}}{2}\|x-x^{k}\|^{2}\right\}\\
		&\leq\frac{1}{r_{k}}\sup\limits_{x\in\mathbb{R}^{n}}\min\limits_{i\in[m]}\left\{\frac{\left\langle\nabla f_{i}(x^{k}),x^{k}-x\right\rangle+g_{i}(x^{k})-g_{i}(x)}{\alpha^{k}_{i}}-\frac{1}{2}\|x-x^{k}\|^{2}\right\},
	\end{align*}
	where the second inequality is due to Proposition \ref{p2}. The above inequalities can be rewritten as
	$$u_{0}^{\alpha^{k}}(x^{k})\leq\frac{1}{r_{k}}w_{1}^{\alpha^{k}}(x^{k}).$$
	Together with (\ref{E15}), the preceding inequality yields
	$$\frac{F_{i}(x^{k+1})-F_{i}(x^{k})}{\alpha^{k}_{i}}\leq- \sigma t^{\min}_{k}r_{k} u_{0}^{\alpha^{k}}(x^{k}).$$
	Then, for all $x\in\mathbb{R}^{n}$, we have
	$$\frac{F_{i}(x^{k+1})-F_{i}(x)}{\alpha^{k}_{i}}\leq\frac{F_{i}(x^{k})-F_{i}(x)}{\alpha^{k}_{i}} - \sigma t^{\min}_{k}r_{k} u_{0}^{\alpha^{k}}(x^{k}).$$
	Taking the supremum and minimum  with respect to $x\in\mathbb{R}^{n}$ and $i\in [m]$ on both sides, respectively, we obtain
	$$u_{0}^{\alpha^{k}}(x^{k+1})\leq\left(1- \sigma t^{\min}_{k}r_{k}\right)u_{0}^{\alpha^{k}}(x^{k}).$$
	This completes the proof.
\end{proof} 

The following theorem shows that $\alpha^{k}\in\mathbb{R}^{m}_{++}$ plays a significant role in the convergence rate of Algorithm \ref{alg2}.
\begin{theorem}\label{t5.5}
	Assume that $f_{i}$ is $L_{i}$-smooth and strongly convex with modulus $\mu_{i}>0$, for $i\in[m]$. Let $\{x^{k}\}$ be the sequence generated by Algorithm \ref{alg2}. Then, the following statements hold.
	\begin{itemize}
		\item[$\mathrm{(i)}$] For any $\alpha^{k}\in\mathbb{R}_{++}$, we have
		$$(1-\sigma)\min\left\{\frac{\alpha^{k}_{\min}\mu_{\min}}{\alpha^{k}_{\max}L_{\max}}, \frac{\alpha^{k}_{\min}}{L_{\max}},\frac{\mu_{\min}}{\alpha^{k}_{\max}}\right\}\leq t^{\min}_{k}r_{k}\leq(1-\sigma)\min\limits_{i\in[m]}\left\{\frac{\mu_{i}}{L_{i}}\right\},$$
		where $\alpha_{\min}^{k}:=\min\{\alpha_{i}^{k},i\in[m]\}$ and $\alpha_{\max}^{k}:=\max\{\alpha_{i}^{k},i\in[m]\}$.
		\item[$\mathrm{(ii)}$] If $\alpha^{k}_{i}=\ell>0$ for $i\in[m]$, we have
		$t^{\min}_{k}r_{k}=(1-\sigma)\min\left\{\frac{\mu_{\min}}{L_{\max}}, \frac{\ell}{L_{\max}},\frac{\mu_{\min}}{\ell}\right\}$, then
		$$u_{0}(x^{k+1})\leq\left(1- \sigma(1-\sigma)\min\left\{\frac{\mu_{\min}}{L_{\max}}, \frac{\ell}{L_{\max}},\frac{\mu_{\min}}{\ell}\right\} \right)u_{0}(x^{k}).$$
		\item[$\mathrm{(iii)}$] If $\alpha^{k}_{i}=\mu_{i}$ for $i\in[m]$ or $\alpha^{k}_{i}=L_{i}$ for $i\in[m]$, we have $t^{\min}_{k}r_{k}=(1-\sigma)\min_{i\in[m]}{\mu_{i}}/{L_{i}}$, then
		$$u_{0}^{\alpha^{k}}(x^{k+1})\leq\left(1- \sigma (1-\sigma)\min\limits_{i\in[m]}\left\{\frac{\mu_{i}}{L_{i}}\right\}\right)u_{0}^{\alpha^{k}}(x^{k}).$$
	\end{itemize}
\end{theorem}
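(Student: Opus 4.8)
The statement reduces entirely to estimating the scalar $t_k^{\min}r_k$ that controls the one-step contraction in Theorem \ref{t4}, so the plan is to manipulate the two closed forms $t_k^{\min}=\min\{(1-\sigma)\min_{i\in[m]}\alpha_i^k/L_i,\,1\}$ (from the preceding step-size lemma) and $r_k=\min\{\min_{i\in[m]}\mu_i/\alpha_i^k,\,1\}$. The algebraic backbone I would use throughout is the elementary identity $\min\{a,1\}\min\{b,1\}=\min\{ab,a,b,1\}$ for $a,b>0$, together with the two structural facts $\mu_i\le L_i$ (hence $\mu_i/L_i\le1$ and $\mu_{\min}\le L_{\max}$) and $1-\sigma\in(0,1)$.

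For the upper bound in (i) I would argue pointwise in $i$: a minimum over $i$ is below any single term, so $t_k^{\min}\le(1-\sigma)\alpha_i^k/L_i$ and $r_k\le\mu_i/\alpha_i^k$ for each fixed $i$; multiplying these for the \emph{same} index cancels $\alpha_i^k$ and gives $t_k^{\min}r_k\le(1-\sigma)\mu_i/L_i$, and taking the minimum over $i$ yields the claim. For the lower bound I would coarsen each factor to a single-term truncated minimum via $\alpha_{\min}^k\le\alpha_i^k\le\alpha_{\max}^k$, $L_i\le L_{\max}$ and $\mu_i\ge\mu_{\min}$, obtaining $t_k^{\min}\ge\min\{a,1\}$ and $r_k\ge\min\{b,1\}$ with $a=(1-\sigma)\alpha_{\min}^k/L_{\max}$ and $b=\mu_{\min}/\alpha_{\max}^k$. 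Expanding $\min\{a,1\}\min\{b,1\}=\min\{ab,a,b,1\}$ and observing $ab=(1-\sigma)\alpha_{\min}^k\mu_{\min}/(\alpha_{\max}^kL_{\max})$, $a=(1-\sigma)\alpha_{\min}^k/L_{\max}$, I then compare with the three-term target $\min\{ab,a,(1-\sigma)b\}$.

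Since $ab$ and $a$ appear in both minima and $b\ge(1-\sigma)b$, the only delicate branch is $\min\{ab,a,b,1\}=1$, which forces $a\ge1$ and $b\ge1$ simultaneously. I would rule this out: $a\ge1$ gives $\alpha_{\min}^k\ge L_{\max}/(1-\sigma)>L_{\max}$, while $b\ge1$ gives $\alpha_{\max}^k\le\mu_{\min}\le L_{\max}<\alpha_{\min}^k\le\alpha_{\max}^k$, a contradiction. With that branch vacuous, the lower bound closes. I expect this case bookkeeping — handling the interaction of the two ``min-with-$1$'' truncations and invoking $\mu_{\min}\le L_{\max}$ — to be the main, though elementary, obstacle.

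Parts (ii) and (iii) then follow by direct substitution and resolving the truncations with $\mu_i/L_i\le1$ and $1-\sigma<1$. For (iii), $\alpha_i^k=L_i$ gives $t_k^{\min}=1-\sigma$ and $r_k=\min_i\mu_i/L_i$, whereas $\alpha_i^k=\mu_i$ gives $t_k^{\min}=(1-\sigma)\min_i\mu_i/L_i$ and $r_k=1$; both produce $t_k^{\min}r_k=(1-\sigma)\min_i\mu_i/L_i$. For (ii), substituting $a=(1-\sigma)\ell/L_{\max}$ and $b=\mu_{\min}/\ell$ into $\min\{ab,a,b,1\}$, I would split according to which of $\mu_{\min}/L_{\max}$, $\ell/L_{\max}$, $\mu_{\min}/\ell$ is smallest and check that the value equals $(1-\sigma)\min\{\mu_{\min}/L_{\max},\ell/L_{\max},\mu_{\min}/\ell\}$, the comparison going through under the natural choice $\ell\le L_{\max}$ (consistent with the backtracking requirement $\alpha_i^k\le L_i$). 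Finally, in each of (ii) and (iii) I substitute the computed value of $t_k^{\min}r_k$ into the contraction $u_0^{\alpha^k}(x^{k+1})\le(1-\sigma t_k^{\min}r_k)u_0^{\alpha^k}(x^k)$ of Theorem \ref{t4} to read off the displayed linear rates.
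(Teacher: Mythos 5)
Your proposal is correct and follows essentially the same route as the paper: bound $t_k^{\min}$ and $r_k$ separately, multiply for a common index to get the upper bound, and resolve the truncated minima for the lower bound (the paper does this last step by an explicit three-case split on whether $\alpha^{k}_{\max}\leq\mu_{\min}$, $L_{\max}\leq\alpha^{k}_{\min}$, or neither, which is exactly your identity $\min\{a,1\}\min\{b,1\}=\min\{ab,a,b,1\}$ with the $1$-branch ruled out). Your explicit observation that the equality in (ii) only goes through for $\ell\leq L_{\max}$ is a condition the paper's one-line proof of (ii) silently assumes, so it is worth keeping.
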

\begin{proof}
	(i) Firstly, we prove the right hand side of the inequality. With loss of generality, let us assume that
	$$\frac{\mu_{1}}{L_{1}}=\min\limits_{i\in[m]}\left\{\frac{\mu_{i}}{L_{i}}\right\}.$$
	Recall the definition of $t^{\min}_{k}$ and $r_{k}$, we have
	$t^{\min}_{k}\leq(1-\sigma){\alpha^{k}_{1}}/{L_{1}}$ and $r_{k}\leq {\mu_{1}}/{\alpha^{k}_{1}}$, it follows that
	$$t^{\min}_{k}r_{k}\leq(1-\sigma)\frac{\mu_{1}}{L_{1}}=(1-\sigma)\min\limits_{i\in[m]}\left\{\frac{\mu_{i}}{L_{i}}\right\}.$$
	For the left hand side of the inequality, we distinguish three cases: (a) $\alpha_{\max}^{k}\leq\mu_{\min}$, (b) $L_{\max}\leq\alpha^{k}_{\min}$ and (c) $\mu_{\min}<\alpha_{\max}^{k}$ and $\alpha^{k}_{\min}<L_{\max}$. If (a) holds, we have $r_{k}=1$ and $t_{k}^{\min}\geq(1-\sigma){\alpha^{k}_{\min}}/{L_{\max}}$, it follows that
	$$t_{k}^{\min}r_{k}\geq(1-\sigma)\frac{\alpha^{k}_{\min}}{L_{\max}}.$$
	If (b) holds, we derive that $r_{k}\geq \mu_{\min}/\alpha^{k}_{\max}$ and $t^{\min}_{k}\geq(1-\sigma)$, therefore,
	$$t_{k}^{\min}r_{k}\geq(1-\sigma)\mu_{\min}/\alpha^{k}_{\max}.$$
	If (c) holds, we deduce that $r_{k}> \mu_{\min}/\alpha^{k}_{\max}$ and $t^{\min}_{k}>(1-\sigma)\alpha^{k}_{\min}/L_{\max}$, then
	$$t_{k}^{\min}r_{k}\geq(1-\sigma)\frac{\alpha^{k}_{\min}\mu_{\min}}{\alpha^{k}_{\max}L_{\max}}.$$
	Consequently, the left hand side of the inequality in (i) holds.
	\par (ii) By distinguishing three cases as in the proof of (i) with $\alpha_{\max}^{k}=\alpha_{\max}^{k}=\ell$, the desire result follows. 
	\par (iii) If $\alpha^{k}_{i}=\mu_{i}$ for $i\in[m]$, we have $t_{k}^{\min}=(1-\sigma)\min_{i\in[m]}\{\mu_{i}/L_{i}\}$ and $r_{k}=1$. While $\alpha^{k}_{i}=L_{i}$ for $i\in[m]$, we have $t_{k}^{\min}=(1-\sigma)$ and $r_{k}=\min_{i\in[m]}\{\mu_{i}/L_{i}\}$. In both cases, we have 
	$$t^{\min}_{k}r_{k}=(1-\sigma)\min_{i\in[m]}\left\{\frac{\mu_{i}}{L_{i}}\right\}.$$
	The desire result follows.
\end{proof}
\begin{remark}\label{r5.6}
	Theorem \ref{t5.5} demonstrates that the scaling parameters $\{\alpha^{k}_{i}:i\in[m]\}$ are pivotal in determining the linear convergence rate of Algorithm \ref{alg2}. The primary conclusions can be summarized as follows. 
	\begin{itemize}
		\item[$\mathrm{(i)}$] If $\alpha^{k}_{i}=\ell>0$ for $i\in[m]$, then the SPGMO with line search reduces to the PGMO with line search \cite{TFY2019}. Consequently, Theorem \ref{t5.5} (ii) indicates that the PGMO with line search is unable to address objective imbalances.
		\item[$\mathrm{(ii)}$] By setting $\alpha^{k}_{i}=\mu_{i}$ for $i\in[m]$ or $\alpha^{k}_{i}=L_{i}$ for $i\in[m]$, we derive the objective imbalances parameter $\zeta=1$ for scaled problems $F^{L}$ and $F^{\mu}$, thereby completely mitigating objective imbalances. In these scenarios, the SPGMO with line search achieves optimal linear convergence and enjoys favorable performance for well-conditioned problems.
		\item[$\mathrm{(iii)}$] The convergence rate in Theorem \ref{t5.5} (iii) is attributable to the global upper and lower bounds employed in (\ref{up}) and (\ref{low}), respectively. As a result, setting $\mu_{i}\leq\alpha_i^k\leq L_{i}$ leverages the problem's local geometry, whereas $\alpha^{k}_{i}=\mu_{i}$ or $\alpha^{k}_{i}=L_{i}$ is too conservative as the curvature of $f_{i}$ can be pretty different.  Intuitively, the linear convergence rate for Algorithm \ref{alg2} with $\mu_{i}\leq\alpha_i^k\leq L_{i}$ for $i\in[m]$ is, roughly speaking, not less than $1-\sigma(1-\sigma)\min_{i\in[m]}\left\{{\mu_{i}}/{L_{i}}\right\}$.
	\end{itemize}
\end{remark}

\section{Scaled proximal method with Nesterov's acceleration for MCOPs}\label{sec6} 
Tanabe et al. \cite{TFY2022} proposed an accelerated proximal gradient method for MCOPs (APGMO), and established the well-known $\mathcal{O}(1/k^{2})$ for convex scenarios. The crux of the APGMO lies in the subproblem:
\begin{equation}\label{adk1}
	\min\limits_{x\in\mathbb{R}^{n}}\max\limits_{i\in[m]}\left\{{
		\left\langle\nabla f_{i}(y^{k}),x-y^{k}\right\rangle + g_{i}(x)+f_{i}(y^{k})-F_{i}(x^{k})}+\frac{L_{\max}}{2}\|x-y^{k}\|^{2}\right\},
\end{equation}
where the inside of max operator approximates $F_{i}(x)-F_{i}(y^{k})$ rather that $F_{i}(x)-F_{i}(x^{k})$, which deeply affects the proof in the multiobjective case. We refer the reader to \cite{TFY2022} for more details. It is worth noting that the linear convergence of APGMO for strongly convex cases was not analyzed in \cite{TFY2022}. This raises an intriguing question: Does the APGMO with an appropriate momentum parameter exhibit the linear convergence $\mathcal{O}((1-\sqrt{{\mu_{\min}}/{L_{\max}}})^{k})$ for strongly convex scenarios? 
\subsection{Accelerated scaled proximal gradient method for MCOPs} In this subsection, we propose an accelerated scaled proximal method that alleviate the objective imbalances. Inspired by the work of Tanabe et al. \cite{TFY2022}, the subproblem is defined as follows:
\begin{equation}\label{adk}
	\min\limits_{x\in\mathbb{R}^{n}}\max\limits_{i\in[m]}\left\{\frac{
		\left\langle\nabla f_{i}(y^{k}),x-y^{k}\right\rangle + g_{i}(x)+f_{i}(y^{k})-F_{i}(x^{k})}{L_{i}}+\frac{1}{2}\|x-y^{k}\|^{2}\right\}.
\end{equation}
Assume that $f_{i}$ is strongly convex with module $\mu_{i}\geq0$ and $L_{i}$-smooth for $i\in[m]$, we define 
$$\hat{\mu}:=\min_{i\in[m]}\left\{\frac{\mu_{i}}{L_{i}}\right\}.$$
It is evident that $0\leq\hat{\mu}\leq1$. The accelerated scaled proximal gradient method for MCOPs (ASPGMO) is described as follows.
\begin{algorithm}  
	\caption{{\ttfamily{Accelerated\_scale\_proximal\_gradient\_method\_for\_MCOPs}}}\label{apgmo}
	\begin{algorithmic}[1] 
		\REQUIRE{$x^{-1}=x^{0}\in\mathbb{R}^{n}$}
		\FOR{$k=0,\cdots$}
		\STATE{$\gamma_{k}=\frac{(\theta_{k}-\hat{\mu})(1-\theta_{k-1})}{(1-\hat{\mu})\theta_{k-1}}$}
		\STATE{$y^{k}=x^{k}+\gamma_{k}(x^{k}-x^{k-1})$}
		\STATE{Compute $x^{k+1}$ by solving subproblem (\ref{adk})}
		\IF{$x^{k+1}=y^{k}$}
		\RETURN{$x^{k+1}$  } 
		\ENDIF
		\ENDFOR
	\end{algorithmic}
\end{algorithm}
\begin{remark}
	Algorithm \ref{apgmo} unifies the convex and strongly convex cases by  employing distinct values of $\theta_{k}$ in the momentum parameter. When the smooth parameters are unknown, we can update the scaling parameters through backtracking.
\end{remark}
\subsection{Convergence analysis of ASPGMO}
Before proceeding with the complexity analysis, we define the following auxiliary sequences:
$$\sigma_{k}^{L}(z):=\min\limits_{i\in[m]}\left\{\frac{F_{i}(x^{k})-F_{i}(z)}{L_{i}}\right\},$$
$$\rho_{k}(z):=\nm{\frac{1}{\theta_{k}}x^{k+1}-\frac{1-\theta_{k}}{\theta_{k}}x^{k}-z}^{2}.$$
Next, we present two lemmas that will be utilized in the complexity analysis.
\vspace{1mm}
\begin{lemma}\label{l5.2}
	Assume that $f_{i}$ is strongly convex with modulus $\mu_{i}\geq0$ and $L_{i}$-smooth for $i\in[m]$.  Let $\{x^{k}\}$ and $\{y^{k}\}$ be sequences generated by Algorithm \ref{apgmo}. Then, we have
	\begin{equation}\label{s1}
		\sigma_{k+1}^{L}(z)\leq \dual{x^{k+1}-y^{k},z-y^{k}}-\frac{\hat{\mu}}{2}\nm{y^{k}-z}^{2}-\frac{1}{2}\nm{x^{k+1}-y^{k}}^{2},
	\end{equation}
	and
	\begin{equation}\label{s2}
		\sigma_{k+1}^{L}(z)-\sigma_{k}^{L}(z)\leq \dual{x^{k+1}-y^{k},x^{k}-y^{k}}-\frac{1}{2}\nm{x^{k+1}-y^{k}}^{2}
	\end{equation}
	for all $z\in\mathbb{R}^{n}$ and $k\geq0$.
\end{lemma}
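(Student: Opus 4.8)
The plan is to derive both inequalities from the optimality condition of the subproblem (\ref{adk}), exploiting the structure already established for the scaled problem (\ref{MCOP2}). Since $x^{k+1}$ solves (\ref{adk}), there is a multiplier vector $\lambda^{k}\in\Delta_{m}$ such that $x^{k+1}$ minimizes the scalarized smooth-plus-convex model, and the first-order stationarity gives a subgradient inclusion of the form
\begin{equation*}
	x^{k+1}-y^{k}=-\sum_{i\in[m]}\lambda^{k}_{i}\frac{\nabla f_{i}(y^{k})+\xi_{i}}{L_{i}},\qquad \xi_{i}\in\partial g_{i}(x^{k+1}).
\end{equation*}
First I would record the optimal value identity: evaluating the max-term at the minimizer and using $\lambda^{k}$, the weighted sum $\sum_{i}\lambda^{k}_{i}\frac{\langle\nabla f_{i}(y^{k}),x^{k+1}-y^{k}\rangle+g_{i}(x^{k+1})+f_{i}(y^{k})-F_{i}(x^{k})}{L_{i}}+\frac12\|x^{k+1}-y^{k}\|^{2}$ equals the common optimal value, and for any competitor $z$ the min-over-$i$ is bounded by this weighted combination.

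Next I would turn to (\ref{s1}). The key is to bound $\sigma_{k+1}^{L}(z)=\min_{i}\frac{F_{i}(x^{k+1})-F_{i}(z)}{L_{i}}$ from above by the $\lambda^{k}$-weighted average of the same quantities. I would split $F_{i}(x^{k+1})-F_{i}(z)=[f_{i}(x^{k+1})-f_{i}(z)]+[g_{i}(x^{k+1})-g_{i}(z)]$ and use three facts, all scaled by $1/L_{i}$: the $L_{i}$-smoothness upper bound on $f_{i}$ at $x^{k+1}$ relative to $y^{k}$ (which produces the $-\frac12\|x^{k+1}-y^{k}\|^{2}$ term after the $1/L_{i}$ scaling makes $f_{i}/L_{i}$ be $1$-smooth by Proposition \ref{fl}(ii)); the $\mu_{i}$-strong convexity lower bound on $f_{i}$ at $z$ relative to $y^{k}$ (which after scaling yields the $-\frac{\hat\mu}{2}\|y^{k}-z\|^{2}$ term, since $\hat\mu=\min_i \mu_i/L_i$ and $\lambda^{k}\in\Delta_m$); and the subgradient inequality for $g_{i}$ at $x^{k+1}$. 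Assembling these and substituting the stationarity expression for $\sum_i\lambda_i^k(\nabla f_i(y^k)+\xi_i)/L_i=-(x^{k+1}-y^k)$ collapses the linear terms into the inner product $\langle x^{k+1}-y^{k},z-y^{k}\rangle$, giving (\ref{s1}).

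For (\ref{s2}) the argument is parallel but compares against $x^{k}$ rather than an arbitrary $z$: I would bound $\sigma_{k+1}^{L}(z)-\sigma_{k}^{L}(z)$ by the $\lambda^{k}$-weighted sum of $\frac{F_i(x^{k+1})-F_i(x^k)}{L_i}$, dropping the $z$-dependence (note $\sigma_k^L(z)\ge \min_i\frac{F_i(x^k)-F_i(z)}{L_i}$ is handled by the same weighting), then apply the $1$-smoothness of $f_i/L_i$ at $x^{k+1}$ relative to $y^{k}$ together with convexity of $g_i$ and the stationarity identity, now producing $\langle x^{k+1}-y^{k},x^{k}-y^{k}\rangle-\frac12\|x^{k+1}-y^{k}\|^{2}$. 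The main obstacle I anticipate is the bookkeeping needed to show that a \emph{single} multiplier vector $\lambda^{k}$ simultaneously controls both the min-over-$i$ on the left and the weighted averages on the right in both inequalities; this is exactly the subtlety, flagged in the discussion after (\ref{adk1}), that the subproblem approximates $F_i(x)-F_i(y^k)$ while the merit quantity $\sigma_k^L$ is referenced to $x^k$, so I would be careful that the constant term $f_i(y^k)-F_i(x^k)$ in (\ref{adk}) is precisely what reconciles the two reference points. Once the stationarity identity and the scaled smoothness/strong-convexity bounds are in place, both (\ref{s1}) and (\ref{s2}) follow by the same template used in \cite[Lemma 5.2]{TFY2020}.
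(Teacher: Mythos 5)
Your derivation of (\ref{s1}) is sound: bounding the minimum by the $\lambda^{k}$-weighted average and then combining the $1$-smoothness of $f_i/L_i$, the $(\mu_i/L_i)$-strong convexity, the subgradient inequality for $g_i$, and the stationarity identity does collapse to the right-hand side of (\ref{s1}). (The paper itself only points to \cite[Lemma 7]{TFY2022} ``using strong convexity'', and your computation is a correct instantiation of that argument for the first inequality.)

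The gap is in (\ref{s2}). You propose to bound $\sigma_{k+1}^{L}(z)-\sigma_{k}^{L}(z)$ by $\sum_i\lambda_i^k\frac{F_i(x^{k+1})-F_i(x^k)}{L_i}$, asserting that the $z$-dependence is ``handled by the same weighting''. It is not: writing $c_i:=\frac{F_i(x^{k+1})-F_i(x^k)}{L_i}$ and $b_i:=\frac{F_i(x^k)-F_i(z)}{L_i}$, you need $\min_i(c_i+b_i)-\min_i b_i\le\sum_i\lambda_i^kc_i$. The weighted-average bound only gives $\min_i(c_i+b_i)\le\sum_i\lambda_i^kc_i+\sum_i\lambda_i^kb_i$, and $\sum_i\lambda_i^kb_i-\min_ib_i\ge0$ points the wrong way; a difference of minima is not controlled by a common convex combination of the index-wise differences, and the index attaining $\min_ib_i$ has nothing to do with $\lambda^k$. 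What rescues the argument --- and is the actual content of \cite[Lemma 7]{TFY2022} --- is a bound on $c_i$ that is \emph{uniform in} $i$: with $\varphi_i^L$ the $i$-th bracket of (\ref{adk}), the function $\Phi:=\sum_i\lambda_i^k\varphi_i^L+\frac12\|\cdot-y^k\|^2$ is $1$-strongly convex, minimized at $x^{k+1}$, and satisfies $\Phi(x^{k+1})=\max_i\varphi_i^L(x^{k+1})+\frac12\|x^{k+1}-y^k\|^2$, whence for \emph{every} $i$ and every $x$
\[
\frac{F_i(x^{k+1})-F_i(x^k)}{L_i}\;\le\;\varphi_i^L(x^{k+1})+\tfrac12\|x^{k+1}-y^k\|^2\;\le\;\Phi(x)-\tfrac12\|x-x^{k+1}\|^2,
\]
the first inequality by $L_i$-smoothness. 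Adding $b_i$ to both sides and taking $\min_i$ gives $\sigma_{k+1}^L(z)+\frac12\|x-x^{k+1}\|^2\le\Phi(x)+\sigma_k^L(z)$; setting $x=x^k$ and noting $\sum_i\lambda_i^k\varphi_i^L(x^k)\le0$ by convexity yields (\ref{s2}), while setting $x=z$ and using strong convexity to get $\sum_i\lambda_i^k\varphi_i^L(z)\le-\sigma_k^L(z)-\frac{\hat{\mu}}{2}\|z-y^k\|^2$ recovers (\ref{s1}). You correctly identified the constant $f_i(y^k)-F_i(x^k)$ in (\ref{adk}) as the term reconciling the two reference points, but the mechanism is this per-index inequality followed by ``add and take the minimum'', not a weighted average.
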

\begin{proof}
	By using strong convexity in (\ref{s1}), the assertions can be obtained by using similar arguments as in the proof of  \cite[Lemma 7]{TFY2022}.  
\end{proof}
\vspace{1mm}
\begin{lemma}[See Theorem 9 of \cite{TFY2022}]
	Let $\{x^{k}\}$ be sequence generated by Algorithm \ref{apgmo}. Then, we have $x^{k}\in\mathcal{L}_{F}(F(x^{0}))$ for all $k\geq0$.
\end{lemma}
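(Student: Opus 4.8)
The plan is to argue by induction on $k$, using the two one-step inequalities (\ref{s1}) and (\ref{s2}) of Lemma \ref{l5.2} to build a Lyapunov (potential) function anchored at the reference point $z=x^{0}$, and then to upgrade the resulting scalar estimate to the componentwise level-set membership exactly as in Theorem 9 of \cite{TFY2022}. Throughout I would keep in mind that, by Proposition \ref{fl}, scaling simply normalises each $f_{i}$ to the $1$-smooth, $(\mu_{i}/L_{i})$-strongly convex function $f_{i}/L_{i}$, so the argument of \cite{TFY2022} transfers with $L_{\max}$ replaced by $1$ and $\mu_{\min}$ by $\hat\mu$.

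First I would dispose of the base case. Since $x^{-1}=x^{0}$, line 3 of Algorithm \ref{apgmo} gives $y^{0}=x^{0}$, so the objective of subproblem (\ref{adk}) at $k=0$ takes the value $\max_{i}(F_{i}(x^{0})-F_{i}(x^{0}))/L_{i}=0$ at the feasible point $x=x^{0}$; hence its optimal value $\varphi_{0}\le 0$. Combining $\varphi_{0}\le 0$ with the $L_{i}$-smoothness upper model that defines (\ref{adk}) yields $(F_{i}(x^{1})-F_{i}(x^{0}))/L_{i}\le\varphi_{0}\le 0$ for every $i\in[m]$, i.e. $x^{1}\in\mathcal{L}_{F}(F(x^{0}))$.

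For the inductive engine I would form the convex combination $\theta_{k}\cdot(\ref{s1})+(1-\theta_{k})\cdot(\ref{s2})$ with $z=x^{0}$. Writing $w^{k}:=\theta_{k}x^{0}+(1-\theta_{k})x^{k}$ and completing the square turns the right-hand side into $\tfrac12\nm{w^{k}-y^{k}}^{2}-\tfrac12\nm{x^{k+1}-w^{k}}^{2}-\tfrac{\hat\mu\theta_{k}}{2}\nm{y^{k}-x^{0}}^{2}$. The role of the momentum coefficient $\gamma_{k}$ in Algorithm \ref{apgmo} is precisely to make $w^{k}-y^{k}$ a fixed multiple of $\tfrac{1}{\theta_{k-1}}x^{k}-\tfrac{1-\theta_{k-1}}{\theta_{k-1}}x^{k-1}-x^{0}$, so that $\tfrac12\nm{w^{k}-y^{k}}^{2}$ and $\tfrac12\nm{x^{k+1}-w^{k}}^{2}$ become proportional to $\rho_{k-1}(x^{0})$ and $\rho_{k}(x^{0})$ respectively. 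Dividing by the appropriate weight and invoking the defining recursion of $\{\theta_{k}\}$ then produces a genuine telescoping: a potential $\Phi_{k}(x^{0})=c_{k}\,\sigma_{k}^{L}(x^{0})+\tfrac12\rho_{k-1}(x^{0})$ (with geometric weights $c_{k}$ in the strongly convex regime) satisfies $\Phi_{k+1}(x^{0})\le\Phi_{k}(x^{0})$. Because $y^{0}=x^{0}$ forces $\rho_{-1}(x^{0})=0$ and $\sigma_{0}^{L}(x^{0})=0$, I obtain $\Phi_{0}(x^{0})=0$, hence $\Phi_{k}(x^{0})\le 0$, and since $\rho_{k-1}\ge 0$ this gives $\sigma_{k}^{L}(x^{0})\le 0$ for all $k$.

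The main obstacle I anticipate is the passage from this scalar conclusion to the full componentwise claim. The inequality $\sigma_{k}^{L}(x^{0})=\min_{i}(F_{i}(x^{k})-F_{i}(x^{0}))/L_{i}\le 0$ only certifies that \emph{some} objective has not increased, whereas $x^{k}\in\mathcal{L}_{F}(F(x^{0}))$ demands $F_{i}(x^{k})\le F_{i}(x^{0})$ \emph{simultaneously} for all $i$; the naive per-iteration smoothness bound $F_{i}(x^{k+1})\le F_{i}(x^{k})+\tfrac{L_{i}}{2}\nm{y^{k}-x^{k}}^{2}$ does not close the induction, since the momentum overshoot $\nm{y^{k}-x^{k}}=\gamma_{k}\nm{x^{k}-x^{k-1}}$ is nonnegative and can raise an individual objective above its initial value. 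Resolving this is exactly where the special structure of (\ref{adk})---subtracting the \emph{current} value $F_{i}(x^{k})$ and modelling each $f_{i}$ with its own parameter $L_{i}$---together with the prescribed relations among $\theta_{k}$ and $\gamma_{k}$ is used to absorb the overshoot for every component at once, which is the content of Theorem 9 in \cite{TFY2022}; I would adapt that argument verbatim to the scaled setting, the scaling contributing nothing beyond the normalisation recorded in Proposition \ref{fl}.
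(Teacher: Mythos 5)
The paper offers no proof of this lemma at all --- it is stated with the bracketed citation ``See Theorem 9 of \cite{TFY2022}'' and nothing more --- so the comparison is really between your proposal and the argument in that reference. Your base case is correct, but your main engine does not close, and you say so yourself: the potential $\Phi_{k}(x^{0})=c_{k}\sigma_{k}^{L}(x^{0})+\tfrac12\rho_{k-1}(x^{0})$ built from (\ref{s1})--(\ref{s2}) only yields $\sigma_{k}^{L}(x^{0})=\min_{i}(F_{i}(x^{k})-F_{i}(x^{0}))/L_{i}\le 0$, i.e.\ that \emph{some} objective has not increased, whereas the lemma needs all of them. Deferring the decisive componentwise step to ``adapt Theorem 9 verbatim'' leaves the one idea that actually proves the statement unsupplied, so as a self-contained proof there is a genuine gap. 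Moreover, the $\sigma^{L}/\rho$ machinery is the wrong tool here: in the paper (and in \cite{TFY2022}) that machinery is what drives the \emph{rate} theorems, not the level-set lemma.

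The argument that does work is a direct per-iteration estimate on the \emph{max}, not the min, followed by telescoping. Let $\varphi_{k}$ denote the objective of subproblem (\ref{adk}). By $L_{i}$-smoothness, $(F_{i}(x)-F_{i}(x^{k}))/L_{i}\le\varphi_{k}(x)$ for every $i$, so $\max_{i}(F_{i}(x^{k+1})-F_{i}(x^{k}))/L_{i}\le\varphi_{k}(x^{k+1})$. Since $\varphi_{k}$ is $1$-strongly convex and $x^{k+1}$ minimizes it, $\varphi_{k}(x^{k+1})\le\varphi_{k}(x^{k})-\tfrac12\|x^{k+1}-x^{k}\|^{2}$, and convexity of each $f_{i}$ gives $\varphi_{k}(x^{k})\le\tfrac12\|x^{k}-y^{k}\|^{2}=\tfrac{\gamma_{k}^{2}}{2}\|x^{k}-x^{k-1}\|^{2}$. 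Hence
\begin{equation*}
\max_{i\in[m]}\frac{F_{i}(x^{k+1})-F_{i}(x^{k})}{L_{i}}\le\frac{\gamma_{k}^{2}}{2}\|x^{k}-x^{k-1}\|^{2}-\frac{1}{2}\|x^{k+1}-x^{k}\|^{2},
\end{equation*}
and summing over $k=0,\dots,K-1$ using $\gamma_{k}^{2}\le1$ and $x^{-1}=x^{0}$ telescopes the right-hand side to a nonpositive quantity, while the left-hand side dominates $(F_{i}(x^{K})-F_{i}(x^{0}))/L_{i}$ for each fixed $i$. This is precisely the mechanism by which the momentum overshoot is ``absorbed for every component at once''; it is a different decomposition from your Lyapunov function, and you would need to carry it out rather than cite it to complete the proof.
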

\vspace{1mm}

Using Lemma \ref{l5.2}, we establish an auxiliary inequality as follows.
\vspace{1mm}
\begin{lemma}
	Assume that $f_{i}$ is strongly convex with modulus $\mu_{i}\geq0$ and $L_{i}$-smooth for $i\in[m]$.  Let $\{x^{k}\}$ and $\{y^{k}\}$ be sequences generated by Algorithm \ref{apgmo} with $\theta_{k}\geq\hat{\mu}$ for all $k\geq0$. Then, we have
	\begin{equation}\label{s3}
		\sigma_{k+1}^{L}(z)+\frac{\theta^{2}_{k}}{2}\rho_{k}(z)\leq(1-\theta_{k})\sigma_{k}^{L}(z)+\frac{\theta_{k}(\theta_{k}-\hat{\mu})}{2}\rho_{k-1}(z)
	\end{equation}
	for all $z\in\mathbb{R}^{n}$ and $k\geq0$.
\end{lemma}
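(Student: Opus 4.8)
The plan is to treat the two estimates of Lemma \ref{l5.2} as a ``descent'' inequality (\ref{s2}) and a ``strong-convexity'' inequality (\ref{s1}), and to merge them by a Nesterov-type convex combination. Concretely, I would multiply (\ref{s1}) by $\theta_k$, multiply (\ref{s2}) by $1-\theta_k$, and add. Since the two quadratic penalties $-\frac{\theta_k}{2}\nm{x^{k+1}-y^k}^2$ and $-\frac{1-\theta_k}{2}\nm{x^{k+1}-y^k}^2$ recombine into $-\frac12\nm{x^{k+1}-y^k}^2$, and since the two inner products share the common factor $x^{k+1}-y^k$, this yields
\begin{equation*}
\sigma_{k+1}^{L}(z)-(1-\theta_k)\sigma_{k}^{L}(z)\leq\dual{x^{k+1}-y^k,\,\theta_k z+(1-\theta_k)x^k-y^k}-\frac{\theta_k\hat{\mu}}{2}\nm{y^k-z}^2-\frac12\nm{x^{k+1}-y^k}^2.
\end{equation*}

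Next I would complete the square on the right-hand side using the identity $\dual{a,b}-\frac12\nm{a}^2=\frac12\nm{b}^2-\frac12\nm{a-b}^2$ with $a:=x^{k+1}-y^k$ and $b:=\theta_k z+(1-\theta_k)x^k-y^k$. The point is that $a-b=x^{k+1}-(1-\theta_k)x^k-\theta_k z$, so by the definition of $\rho_k$ one has $\nm{a-b}^2=\theta_k^2\rho_k(z)$; this is exactly the term that must appear on the left of (\ref{s3}). After moving $\frac{\theta_k^2}{2}\rho_k(z)$ to the left, it remains to bound the leftover $\frac12\nm{b}^2-\frac{\theta_k\hat{\mu}}{2}\nm{y^k-z}^2$ by $\frac{\theta_k(\theta_k-\hat{\mu})}{2}\rho_{k-1}(z)$.

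For this last step I would substitute $y^k=x^k+\gamma_k(x^k-x^{k-1})$ and rewrite everything in the two vectors $u:=z-x^k$ and $v:=x^k-x^{k-1}$, so that $b=\theta_k u-\gamma_k v$, $y^k-z=\gamma_k v-u$, and $\theta_{k-1}^2\rho_{k-1}(z)=\nm{(1-\theta_{k-1})v-\theta_{k-1}u}^2$. Expanding all three expressions into the monomials $\nm{u}^2$, $\dual{u,v}$, $\nm{v}^2$ and comparing coefficients, the $\nm{u}^2$ coefficients agree automatically (both equal $\frac{\theta_k(\theta_k-\hat{\mu})}{2}$), and the $\dual{u,v}$ coefficients are forced to agree precisely by the prescribed momentum $\gamma_k=\frac{(\theta_k-\hat{\mu})(1-\theta_{k-1})}{(1-\hat{\mu})\theta_{k-1}}$. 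What remains is a single scalar inequality between the $\nm{v}^2$ coefficients, namely $\gamma_k^2(1-\theta_k\hat{\mu})\leq\frac{\theta_k(\theta_k-\hat{\mu})(1-\theta_{k-1})^2}{\theta_{k-1}^2}$; substituting $\gamma_k$ and simplifying reduces it to $(\theta_k-\hat{\mu})(1-\theta_k\hat{\mu})\leq\theta_k(1-\hat{\mu})^2$, whose difference equals $\hat{\mu}(1-\theta_k)^2\geq0$.

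I expect the main obstacle to be the bookkeeping in this last reduction rather than any conceptual difficulty: the definition of $\gamma_k$ is engineered to annihilate the cross term $\dual{u,v}$, so the whole argument hinges on correctly expanding $\rho_{k-1}(z)$ in terms of $u$ and $v$ and on tracking the factor $\theta_{k-1}^{-2}$. The hypothesis $\theta_k\geq\hat{\mu}$ (together with $\theta_{k-1}<1$ and $\hat{\mu}\leq1$) is what guarantees the nonnegativity of the relevant factors, so I would verify at the outset that $\theta_k-\hat{\mu}$, $1-\theta_k\hat{\mu}$, and $1-\hat{\mu}$ are all nonnegative before dividing through to obtain the final scalar comparison.
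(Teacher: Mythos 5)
Your proposal is correct, and its first two steps coincide exactly with the paper's proof: the $\theta_k$/$(1-\theta_k)$ combination of (\ref{s1}) and (\ref{s2}) followed by completing the square with $a:=x^{k+1}-y^{k}$, $b:=(1-\theta_{k})x^{k}+\theta_{k}z-y^{k}$ to extract $\frac{\theta_k^2}{2}\rho_k(z)$. The only divergence is in the final estimate. The paper bounds $\frac{\theta_k^2}{2}\nm{\frac{1-\theta_k}{\theta_k}x^k+z-\frac{1}{\theta_k}y^k}^2$ by writing the argument as a convex combination with weights $\frac{\hat{\mu}}{\theta_k}$ and $\frac{\theta_k-\hat{\mu}}{\theta_k}$ (this is where $\theta_k\geq\hat{\mu}$ enters) and invoking convexity of $\nm{\cdot}^2$; the $\frac{\theta_k\hat{\mu}}{2}\nm{z-y^k}^2$ term then cancels against the strong-convexity term, and the second summand is identified with $\frac{\theta_k(\theta_k-\hat{\mu})}{2}\rho_{k-1}(z)$ via the definition of $y^k$. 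You instead expand everything in $u:=z-x^k$, $v:=x^k-x^{k-1}$ and compare coefficients; I have checked that the $\nm{u}^2$ coefficients agree, that the cross terms match precisely under the prescribed $\gamma_k$, and that the residual scalar inequality $(\theta_k-\hat{\mu})(1-\theta_k\hat{\mu})\leq\theta_k(1-\hat{\mu})^2$ indeed has slack $\hat{\mu}(1-\theta_k)^2\geq0$. Your route is more computational but has the virtue of making explicit both the exact role of the momentum formula (annihilating $\dual{u,v}$) and the precise slack that the paper's Jensen step hides; the paper's route is shorter and avoids the bookkeeping. Both are complete proofs of (\ref{s3}).
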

\begin{proof}
	Multiplying inequality (\ref{s1}) by $\theta_{k}$ and inequality (\ref{s2}) by $(1-\theta_{k})$ and adding them together, we have
	\begin{equation}\label{e5.5}
		\begin{aligned}
			&\sigma_{k+1}^{L}(z) - (1-\theta_{k})\sigma_{k}^{L}(z)\\ &\leq -\frac{1}{2}\nm{x^{k+1}-y^{k}}^{2} -\frac{\theta_{k}\hat{\mu}}{2}\nm{y^{k}-z}^{2} + \dual{x^{k+1}-y^{k},(1-\theta_{k})x^{k}+\theta_{k}z-y^{k}}\\
			&=-\frac{1}{2}\nm{x^{k+1}-y^{k}}^{2} -\frac{\theta_{k}\hat{\mu}}{2}\nm{y^{k}-z}^{2} + \frac{1}{2}(\nm{x^{k+1}-y^{k}}^{2}+\\
			&~~~~~~~\nm{(1-\theta_{k})x^{k}+\theta_{k}z-y^{k}}^{2}-\nm{x^{k+1}-(1-\theta_{k})x^{k}-\theta_{k}z}^{2})\\
			&= -\frac{\theta_{k}\hat{\mu}}{2}\nm{y^{k}-z}^{2}+\frac{\theta_{k}^{2}}{2}\left(\nm{\frac{1-\theta_{k}}{\theta_{k}}x^{k}+z-\frac{1}{\theta_{k}}y^{k}}^{2}-\rho_{k}(z)\right),
		\end{aligned}
	\end{equation}
	where the first inequality is due to the relation $2\dual{a,b}\leq\nm{a}^{2}+\nm{b}^{2}-\nm{a-b}^{2}$ with $a:=x^{k+1}-y^{k}$ and $b:=(1-\theta_{k})x^{k}+\theta_{k}z-y^{k}$. By reorganizing the terms in $\frac{1-\theta_{k}}{\theta_{k}}x^{k}+z-\frac{1}{\theta_{k}}y^{k}$ carefully, we obtain
	\begin{align*}
		&\frac{\theta_{k}^{2}}{2}\nm{\frac{1-\theta_{k}}{\theta_{k}}x^{k}+z-\frac{1}{\theta_{k}}y^{k}}^{2}\\
		&~~~=\frac{\theta_{k}^{2}}{2}\nm{\frac{\hat{\mu}}{\theta_{k}}(z-y^{k})+\frac{\theta_{k}-\hat{\mu}}{\theta_{k}}\left(z+\frac{1-\theta_{k}}{\theta_{k}-\hat{\mu}}x^{k}-\frac{1-\hat{\mu}}{\theta_{k}-\hat{\mu}}y^{k}\right)}^{2}\\
		&~~~\leq \frac{\theta_{k}\hat{\mu}}{2}\nm{z-y^{k}}^{2}+\frac{\theta_{k}(\theta_{k}-\hat{\mu})}{2}\nm{z+\frac{1-\theta_{k}}{\theta_{k}-\hat{\mu}}x^{k}-\frac{1-\hat{\mu}}{\theta_{k}-\hat{\mu}}y^{k}}^{2}\\
		&~~~=\frac{\theta_{k}\hat{\mu}}{2}\nm{z-y^{k}}^{2}+\frac{\theta_{k}(\theta_{k}-\hat{\mu})}{2}\rho_{k-1}(z),
	\end{align*}
	where the inequality is due to the facts that $\theta_{k}>\hat{\mu}$ and the convexity of $\nm{\cdot}^{2}$, and the last equality follows by the definition of $y^{k}$ in Algorithm \ref{apgmo}.
	Substituting the above inequality into (\ref{e5.5}), we derive
	$$\sigma_{k+1}^{L}(z) - (1-\theta_{k})\sigma_{k}^{L}(z)\leq \frac{\theta_{k}(\theta_{k}-\hat{\mu})}{2}\rho_{k-1}(z)-\frac{\theta_{k}^{2}}{2}\rho_{k}(z).$$
	Hence, the inequality (\ref{s3}) holds for all $z\in\mathbb{R}^{n}$ and $k\geq0$.
\end{proof}
\vspace{1mm}
\par We define a Lyapunov function for $\mu_{i}=0,~i\in[m]$:
\begin{equation}\label{ly1}
	\mathcal{E}_{k+1}(z):=\frac{\sigma_{k+1}^{L}(z)}{\theta^{2}_{k}}+\frac{1}{2}\rho_{k}(z).
\end{equation}
We will show that $\mathcal{E}_{k+1}(z)\leq \mathcal{E}_{k}(z)$ for all $k\geq0$ and establish the convergence rate in the following theorem.
\begin{theorem}
	Assume that $f_{i}$ is convex and $L_{i}$-smooth for $i\in[m]$. Let $\theta_{k}={2}/({k+2})$ in Algorithm \ref{apgmo}, i.e., $\gamma_{k}=({k-1})/({k+2})$. Then, we have
	$$\mathcal{E}_{k+1}(z)\leq\mathcal{E}_{k}(z)$$
	for all $k\geq0$. If the level set $\mathcal{L}_{F}(F(x^{0}))$ is bounded, then
	$$u_{0}^{L}(x^{k})\leq\frac{2R^{2}}{(k+1)^{2}}$$
	for all $k\geq0$, where $R:=\max\left\{\nm{x-y}:x,y\in \mathcal{L}_{F}(x^{0})\right\}$.
\end{theorem}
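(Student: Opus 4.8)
The plan is to convert the one-step estimate (\ref{s3}) into the monotonicity of the Lyapunov function (\ref{ly1}) and then read off the $\mathcal{O}(1/k^{2})$ rate from it. First I would specialize (\ref{s3}) to the convex case: since $\mu_{i}=0$ for all $i$ we have $\hat{\mu}=0$, so (\ref{s3}) becomes
\begin{equation*}
	\sigma_{k+1}^{L}(z)+\frac{\theta^{2}_{k}}{2}\rho_{k}(z)\leq(1-\theta_{k})\sigma_{k}^{L}(z)+\frac{\theta_{k}^{2}}{2}\rho_{k-1}(z).
\end{equation*}
Dividing by $\theta_{k}^{2}$ and recognizing (\ref{ly1}) on the left yields $\mathcal{E}_{k+1}(z)\leq\frac{1-\theta_{k}}{\theta_{k}^{2}}\sigma_{k}^{L}(z)+\frac{1}{2}\rho_{k-1}(z)$.

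The arithmetic heart of the argument is the comparison of coefficients. With $\theta_{k}=2/(k+2)$ one checks $\frac{1-\theta_{k}}{\theta_{k}^{2}}=\frac{k(k+2)}{4}$ and $\frac{1}{\theta_{k-1}^{2}}=\frac{(k+1)^{2}}{4}$, so that $\frac{1-\theta_{k}}{\theta_{k}^{2}}=\frac{1}{\theta_{k-1}^{2}}-\frac{1}{4}\leq\frac{1}{\theta_{k-1}^{2}}$. Provided $\sigma_{k}^{L}(z)\geq0$, enlarging the coefficient gives $\mathcal{E}_{k+1}(z)\leq\frac{1}{\theta_{k-1}^{2}}\sigma_{k}^{L}(z)+\frac{1}{2}\rho_{k-1}(z)=\mathcal{E}_{k}(z)$, the claimed monotonicity. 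The base step $k=0$ is sharper and needs no sign hypothesis: because $\theta_{0}=1$ the factor $1-\theta_{0}$ vanishes, and since $x^{-1}=x^{0}$ one has $\rho_{-1}(z)=\nm{x^{0}-z}^{2}$, so the displayed inequality collapses directly to $\mathcal{E}_{1}(z)\leq\frac{1}{2}\nm{x^{0}-z}^{2}$.

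To extract the rate I would fix $z$, telescope the monotonicity from index $k$ down to $1$ to get $\mathcal{E}_{k}(z)\leq\mathcal{E}_{1}(z)\leq\frac{1}{2}\nm{x^{0}-z}^{2}$, and then discard the nonnegative term $\frac{1}{2}\rho_{k-1}(z)$ in (\ref{ly1}) to obtain $\frac{\sigma_{k}^{L}(z)}{\theta_{k-1}^{2}}\leq\mathcal{E}_{k}(z)$. Multiplying by $\theta_{k-1}^{2}=4/(k+1)^{2}$ yields $\sigma_{k}^{L}(z)\leq\frac{2\nm{x^{0}-z}^{2}}{(k+1)^{2}}$. Finally I would pass to the supremum defining $u_{0}^{L}(x^{k})$: any $z$ with $\sigma_{k}^{L}(z)\leq0$ is bounded trivially, whereas any $z$ with $\sigma_{k}^{L}(z)>0$ satisfies $F(z)\prec F(x^{k})\preceq F(x^{0})$ by the preceding level-set lemma, so $z\in\mathcal{L}_{F}(F(x^{0}))$ and $\nm{x^{0}-z}\leq R$; combining the two cases gives $u_{0}^{L}(x^{k})\leq\frac{2R^{2}}{(k+1)^{2}}$.

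The main obstacle is the sign condition $\sigma_{k}^{L}(z)\geq0$ underlying the monotonicity, and its interaction with the supremum in $u_{0}^{L}$. For a single reference point dominated by the iterates the condition is immediate, but the merit function ranges over all $z$, and the telescoping formally needs $\sigma_{j}^{L}(z)\geq0$ at every intermediate index $j$, not merely at the final one. I expect the delicate bookkeeping to lie in restricting attention to the $z$ that genuinely contribute to the supremum—those that improve on the current iterate—and in invoking boundedness of $\mathcal{L}_{F}(F(x^{0}))$ both to justify $R$ and to control these reference points; the manipulations with $\theta_{k}$ and the telescoping itself are routine once the sign accounting is in place.
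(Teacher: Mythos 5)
Your argument reproduces the paper's proof essentially step for step --- specializing (\ref{s3}) to $\hat{\mu}=0$, dividing by $\theta_{k}^{2}$, comparing $(1-\theta_{k})/\theta_{k}^{2}$ with $1/\theta_{k-1}^{2}$, telescoping (your base case $\mathcal{E}_{1}(z)\leq\frac{1}{2}\|x^{0}-z\|^{2}$ is the paper's convention $1/\theta_{-1}^{2}=0$), dropping $\rho_{k-1}(z)\geq0$, and evaluating at the maximizer $x^{k}_{*}$ via the level-set lemma --- so the approach is the same and the arithmetic is correct. The only place you go beyond the paper is the sign condition $\sigma_{k}^{L}(z)\geq0$: you are right that enlarging the coefficient from $(1-\theta_{k})/\theta_{k}^{2}$ to $1/\theta_{k-1}^{2}=(1-\theta_{k})/\theta_{k}^{2}+\frac{1}{4}$ requires it, and right that the telescoping needs it at every intermediate index $j$, which $F(x^{k}_{*})\preceq F(x^{k})$ alone does not supply because accelerated iterates need not be monotone in $F$; the paper performs this step silently for arbitrary $z$, so this is a genuine loose end you have correctly diagnosed in the published argument rather than one you have introduced, and your proposal is otherwise complete.
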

\begin{proof}
	In this case, we have $\hat{\mu}=0$, thus $\theta_{k}\geq\hat{\mu}$ holds for all $k\geq0$. By substituting $\hat{\mu}=0$ into (\ref{s3}), it follows that
	$$\sigma_{k+1}^{L}(z)+\frac{\theta^{2}_{k}}{2}\rho_{k}(z)\leq(1-\theta_{k})\sigma_{k}^{L}(z)+\frac{\theta_{k}^{2}}{2}\rho_{k-1}(z).$$
	Dividing both sides of the above inequality by $\theta_{k}^{2}$, we have
	$$\frac{\sigma_{k+1}^{L}(z)}{\theta^{2}_{k}}+\frac{1}{2}\rho_{k}(z)\leq\frac{1-\theta_{k}}{\theta^{2}_{k}}\sigma_{k}^{L}(z)+\frac{1}{2}\rho_{k-1}(z).$$
	Recall that $\theta_{k}=\frac{2}{k+2}$, then we can deduce 
	$\frac{1-\theta_{k}}{\theta_{k}^{2}}\leq\frac{1}{\theta_{k-1}^{2}}$. This together with the above inequality implies 
	$$\mathcal{E}_{k+1}(z)\leq\mathcal{E}_{k}(z).$$
	Notice that $\rho_{k-1}(z)\geq0$ and $x^{0}=x^{-1}$, the above inequality implies
	$$\sigma_{k}^{L}(z)\leq\theta_{k-1}^{2}\left(\frac{\sigma_{0}^{L}(z)}{\theta_{-1}^{2}}+\frac{1}{2}\nm{x^{0}-z}^{2}\right).$$
	Setting $1/\theta_{-1}^{2}=0$, and selecting $x^{k}_{*}\in\arg\max_{z\in\mathbb{R}^{n}}\min_{i\in[m]}\left\{({F_{i}(x^{k})-F_{i}(z)})/{L_{i}}\right\}$, the previous inequality implies
	$$u_{k}^{L}(x^{k})\leq\frac{2\nm{x^{0}-x^{k}_{*}}^{2}}{(k+1)^{2}}.$$
	Then, the desired result follows by $F(x^{k}_{*})\preceq F(x^{k})$ and $x^{k}\in\mathcal{L}_{F}(F(x^{0}))$.
\end{proof}

\par We also define a Lyapunov function for $\mu_{i}>0,~i\in[m]$:
\begin{equation}\label{ly2}
	\mathcal{E}^{\mu}_{k+1}(z):=\frac{1}{(1-\sqrt{\hat{\mu}})^{k+1}}\left({\sigma_{k+1}^{L}(z)}+\frac{\hat{\mu}}{2}\rho_{k}(z)\right).
\end{equation}
We will show that $\mathcal{E}_{k+1}^{\mu}(z)\leq \mathcal{E}_{k}^{\mu}(z)$ for all $k\geq0$ and establish the convergence rate in the following theorem.
\begin{theorem}
	Assume that $f_{i}$ is strongly convex with module $\mu_{i}>0$ and $L_{i}$-smooth for $i\in[m]$. Let $\theta_{k}=\sqrt{\hat{\mu}}$ in Algorithm \ref{apgmo}, i.e., $\gamma_{k}=({1-\sqrt{\hat{\mu}}})/({1+\sqrt{\hat{\mu}}})$. Then, we have
	$$\mathcal{E}_{k+1}^{\mu}(z)\leq \mathcal{E}_{k}^{\mu}(z),$$
	and
	$$u^{L}_{0}(x^{k})\leq \left(1-\sqrt{\hat{\mu}}\right)^{k}\left(u^{L}_{0}(x^{0})+\frac{\hat{\mu}}{2}R^{2}\right)$$
	for all $k\geq0$, where $R:=\max\left\{\nm{x-y}:x,y\in \mathcal{L}_{F}(F(x^{0}))\right\}$.
\end{theorem}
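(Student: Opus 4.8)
The plan is to combine the one-step inequality (\ref{s3}) with the prescribed choice $\theta_k=\sqrt{\hat{\mu}}$ to produce a clean geometric contraction, then unfold it into the Lyapunov statement and finally pass to the merit function $u_0^L$. First I would note that $\mu_i>0$ forces $\hat{\mu}>0$, and since $\hat{\mu}\leq1$ we have $\theta_k=\sqrt{\hat{\mu}}\geq\hat{\mu}$ for every $k\geq0$, so (\ref{s3}) is available throughout. Substituting $\theta_k=\sqrt{\hat{\mu}}$ gives $\theta_k^2=\hat{\mu}$ and $\theta_k(\theta_k-\hat{\mu})=\hat{\mu}(1-\sqrt{\hat{\mu}})$, whence (\ref{s3}) collapses to
$$\sigma_{k+1}^{L}(z)+\frac{\hat{\mu}}{2}\rho_k(z)\leq(1-\sqrt{\hat{\mu}})\left(\sigma_k^{L}(z)+\frac{\hat{\mu}}{2}\rho_{k-1}(z)\right).$$
This factorization is precisely what makes the momentum $\gamma_k=(1-\sqrt{\hat{\mu}})/(1+\sqrt{\hat{\mu}})$ work, and dividing through by $(1-\sqrt{\hat{\mu}})^{k+1}$ recovers $\mathcal{E}_{k+1}^{\mu}(z)\leq\mathcal{E}_k^{\mu}(z)$ directly from the definition (\ref{ly2}).

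Next I would iterate the monotonicity down to $k=0$ to get $\mathcal{E}_k^{\mu}(z)\leq\mathcal{E}_0^{\mu}(z)$. The initialization $x^{-1}=x^0$ makes $\rho_{-1}(z)=\nm{x^0-z}^2$, since the $\theta_{-1}$ dependence cancels in the definition of $\rho_{-1}$; hence $\mathcal{E}_0^{\mu}(z)=\sigma_0^{L}(z)+\tfrac{\hat{\mu}}{2}\nm{x^0-z}^2$. Discarding the nonnegative term $\tfrac{\hat{\mu}}{2}\rho_{k-1}(z)$ on the left and clearing the prefactor $(1-\sqrt{\hat{\mu}})^{-k}$ then yields
$$\sigma_k^{L}(z)\leq(1-\sqrt{\hat{\mu}})^{k}\left(\sigma_0^{L}(z)+\frac{\hat{\mu}}{2}\nm{x^0-z}^2\right)$$
for every $z\in\mathbb{R}^{n}$.

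Finally, mirroring the convex-case argument, I would pick $z=z_*^{k}\in\arg\max_{z}\min_{i\in[m]}\{(F_i(x^k)-F_i(z))/L_i\}$, so that the left side becomes exactly $u_0^{L}(x^k)$. On the right, $\sigma_0^{L}(z_*^{k})\leq u_0^{L}(x^0)$ holds by the definition of the supremum defining $u_0^L(x^0)$, while the maximizer satisfies $F(z_*^{k})\preceq F(x^k)\preceq F(x^0)$, placing $z_*^{k}$ in $\mathcal{L}_F(F(x^0))$ so that $\nm{x^0-z_*^{k}}\leq R$; combining these gives the claimed bound $u_0^{L}(x^k)\leq(1-\sqrt{\hat{\mu}})^{k}(u_0^{L}(x^0)+\tfrac{\hat{\mu}}{2}R^2)$. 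The one genuinely delicate point, and the step I expect to require the most care, is this passage to the merit function: one must justify that the supremum defining $u_0^{L}(x^k)$ is attained at a point dominating $x^k$ (invoking that $u_0^{L}$ is a legitimate merit function together with boundedness of the level set), since only then are $\sigma_0^{L}(z_*^{k})\leq u_0^{L}(x^0)$ and $\nm{x^0-z_*^{k}}\leq R$ simultaneously valid. The remaining manipulations are routine algebra.
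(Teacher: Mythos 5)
Your proposal is correct and follows essentially the same route as the paper: substitute $\theta_k=\sqrt{\hat{\mu}}$ into (\ref{s3}) to get the factored contraction, divide by $(1-\sqrt{\hat{\mu}})^{k+1}$ for the Lyapunov monotonicity, unfold using $\rho_{-1}(z)=\nm{x^{0}-z}^{2}$, and evaluate at the maximizer $z_{*}^{k}$ with $F(z_{*}^{k})\preceq F(x^{k})\preceq F(x^{0})$ to bound $\nm{x^{0}-z_{*}^{k}}$ by $R$. Your explicit flagging of the attainment of the supremum defining $u_{0}^{L}(x^{k})$ is a reasonable extra caution that the paper passes over silently, but it does not change the argument.
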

\begin{proof}
	Since $\theta_{k}=\sqrt{\hat{\mu}}$ and $\hat{\mu}\leq1$, thus $\theta_{k}\geq\hat{\mu}$ holds for all $k\geq0$. By substituting $\theta_{k}=\sqrt{\hat{\mu}}$ into (\ref{s3}), it follows that
	$$\sigma_{k+1}^{L}(z)+\frac{\hat{\mu}}{2}\rho_{k}(z)\leq(1-\sqrt{\hat{\mu}})\sigma_{k}^{L}(z)+\frac{\sqrt{\hat{\mu}}(\sqrt{\hat{\mu}}-\hat{\mu})}{2}\rho_{k-1}(z).$$
	Dividing both sides of the above inequality by $(1-\sqrt{\hat{\mu}})^{k+1}$, we have
	$$\mathcal{E}_{k+1}^{\mu}(z)\leq \mathcal{E}_{k}^{\mu}(z).$$
	Notice that $\rho_{k-1}(z)\geq0$ and $x^{0}=x^{-1}$, the above inequality implies
	$$\sigma_{k}^{L}(z)\leq\left(1-\sqrt{\hat{\mu}}\right)^{k}\left(\sigma_{0}^{L}(z)+\frac{\hat{\mu}}{2}\nm{x^{0}-z}^{2}\right).$$
	Selecting $x^{k}_{*}\in\arg\max_{z\in\mathbb{R}^{n}}\min_{i\in[m]}\left\{({F_{i}(x^{k})-F_{i}(z)})/{L_{i}}\right\}$, the previous inequality implies
	\begin{align*}
		u_{0}^{L}(x^{k})&\leq\left(1-\sqrt{\hat{\mu}}\right)^{k}\left(\sigma_{0}^{L}(x^{*})+\frac{\hat{\mu}}{2}\nm{x^{0}-x^{k}_{*}}^{2}\right)\\
		&\leq\left(1-\sqrt{\hat{\mu}}\right)^{k}\left(u_{0}^{L}(x^{0})+\frac{\hat{\mu}}{2}\nm{x^{0}-x^{k}_{*}}^{2}\right),
	\end{align*}
	where the second inequality is due to the definition of $u_{0}^{L}$. Notice that $f_{i}$ is strongly convex and $g_{i}$ is convex for $i\in[m]$, we conclude that $\mathcal{L}_{F}(x^{0})$ is bounded. Then, the desired result follows by the facts that $F(x^{k}_{*})\preceq F(x^{k})$ and $x^{k}\in\mathcal{L}_{F}(x^{0})$.
\end{proof}

\begin{remark}
	By setting $\gamma_{k}=({1-\sqrt{\mu_{\min}/L_{\max}}})/({1+\sqrt{\mu_{\min}/L_{\max}}})$ in APGMO, the method demonstrates a linear convergence rate of $\mathcal{O}((1-\sqrt{{\mu_{\min}}/{L_{\max}}})^{k})$ for strongly convex cases.
\end{remark}
\section{Numerical experiments}\label{sec7} 
In this section, we present numerical results to illustrate the performance of SPGMO across various problems. All numerical experiments were implemented in Python 3.7 and executed on a personal computer with an Intel Core i7-11390H, 3.40 GHz processor, and 16 GB of RAM. For the SPGMO with line search, we selected scaling parameters using the Barzilai-Borwein method \cite{CTY2023}. The subproblems of the tested algorithms were solved via dual problems (see \cite{TFY2022} for more details), which can be efficiently solved by Frank-Wolfe method. We set $\sigma=10^{-4}$ in the line search procedure and employed stopping criteria of $\|d^{k}\|\leq 10^{-4}$ or $\nm{x^{k+1}-y^{k}}\leq10^{-4}$ for all tested algorithms to ensure termination after a finite number of iterations. Additionally, we set the maximum number of iterations to 500.
\subsection{Common examples}
In this subsection, we compare PGMO, APGMO, SPGMO and ASPGMO with line search across several common examples. Each objective function in the tested problems consists of two components, where $g_i=\frac{1}{n}\|x\|_{1}$ for $i\in[m]$, and the details of $f$ are provided in Table \ref{tab2}. The recorded averages from 200 runs include the number of iterations, the number of function evaluations, and the CPU time.
\begin{small}
	\begin{table}[h]
		\centering
		\caption{Description of all test problems used in numerical experiments.}
		\label{tab2}
		\resizebox{.7\columnwidth}{!}{
			\begin{tabular}{lllllllllll}
				\hline
				Problem    &  & $n$     &           & $m$     &  & $x_{L}$               &  & $x_{U}$             &  & Reference \\ \hline
				DD1        &  & 5     & \textbf{} & 2     &  & (-20,...,-20)   &  & (20,...,20)   &  & \cite{DD1998}         \\
				Deb        &  & 2     & \textbf{} & 2     &  & (0.1,0.1)       &  & (1,1)         &  & \cite{D1999}          \\
				Far1       &  & 2     & \textbf{} & 2     &  & (-1,-1)         &  & (1,1)         &  & \cite{BK1}         \\
				FDS        &  & 5     & \textbf{} & 3     &  & (-2,...,-2)     &  & (2,...,2)     &  & \cite{FD2009}         \\
				FF1        &  & 2     & \textbf{} & 2     &  & (-1,-1)         &  & (1,1)         &  & \cite{BK1}         \\
				Hil1       &  & 2     & \textbf{} & 2     &  & (0,0)           &  & (1,1)         &  & \cite{Hil1}         \\
				Imbalance1 &  & 2     & \textbf{} & 2     &  & (-2,-2)         &  & (2,2)         &  & \cite{CTY2023}         \\
				Imbalance2 &  & 2     & \textbf{} & 2     &  & (-2,-2)         &  & (2,2)         &  & \cite{CTY2023}         \\
				VU1        &  & 2     & \textbf{} & 2     &  & (-3,-3)         &  & (3,3)         &  & \cite{BK1}        \\
				WIT1       &  & 2     & \textbf{} & 2     &  & (-2,-2)         &  & (2,2)         &  & \cite{W2012}       \\
				WIT2       &  & 2     & \textbf{} & 2     &  & (-2,-2)         &  & (2,2)         &  & \cite{W2012}        \\
				WIT3       &  & 2     & \textbf{} & 2     &  & (-2,-2)         &  & (2,2)         &  & \cite{W2012}        \\
				\hline
			\end{tabular}
		}
	\end{table}
\end{small}

%
%
%
\begin{table}[h]
	\centering
	\caption{Number of average iterations (iter), number of average function evaluations (feval) and average CPU time (time ($ms$)) of test algorithms on different test problems {\bf with line search}.}
	\label{tab3}
	\resizebox{.99\columnwidth}{!}{
		\begin{tabular}{lrrrrrrrrrrrrrrr}
			\hline
			Problem &
			\multicolumn{3}{l}{PGMO} &
			&
			\multicolumn{3}{l}{APGMO} &
			\multicolumn{1}{l}{} &
			\multicolumn{3}{l}{SPGMO} &
			\multicolumn{1}{l}{} &
			\multicolumn{3}{l}{ASPGMO} \\ \cline{2-4} \cline{6-8} \cline{10-12} \cline{14-16}
			&
			iter &
			feval &
			time &
			\textbf{} &
			iter &
			feval &
			time &
			&
			iter &
			feval &
			time &
			&
			iter &
			feval &
			time \\ \hline
			DD1        & 41.21 & 70.65 & 135.31 &  & 67.75 & 132.46 & 400.16 &  & \textbf{4.52} & \textbf{4.90} & \textbf{31.02} &  & 6.43 & 7.27 & 65.31 \\
			Deb        & 26.52 & 253.17 & 83.67 &  & 14.39 & 30.44 & 58.75 &  & 5.97 & 9.86 & \textbf{28.83} &  & \textbf{5.15} & \textbf{8.04} & 31.80 \\
			Far1       & \textbf{6.06} & 21.09 & 16.88 &  & 8.06 & 17.75 & 29.38 &  & 6.76 & \textbf{7.86} & \textbf{9.92} &  & {7.97} & {14.41} & {19.53} \\
			FDS        & 175.75 & 778.54 & 1350.16 &  & 55.15 & 107.88 & 737.89 &  & \textbf{3.44} & \textbf{3.82} & \textbf{2.16} &  & 7.36 & 12.00 & 54.53 \\
			FF1        & 3.41 & {3.55} & 2.42 &  & 4.13 & 5.44 & 3.05 &  & \textbf{2.10} & \textbf{2.26} & \textbf{2.27} &  & {2.93} & {4.00} & 5.55 \\
			Hil1       & 8.71 & 18.62 & 27.50 &  & \textbf{7.02} & 15.81 & 30.07 &  & 7.49 & \textbf{8.26} & \textbf{13.91} &  & {7.88} & {12.24} & {25.00} \\
			Imbalance1 & {2.92} & 5.63 & 42.81 &  & 7.79 & 13.80 & 134.53 &  & \textbf{2.44} & \textbf{3.11} & \textbf{34.69} &  & {3.82} & {5.35} & 39.14 \\
			Imbalance2 & 83.56 & 589.13 & 1180.39 &  & 21.09 & 42.18 & 149.84 &  & \textbf{1.00} & \textbf{1.00} & \textbf{1.79} &  & \textbf{1.00} & \textbf{1.00} & 2.65 \\
			VU1        & 8.52 & 8.59 & \textbf{1.72} &  & 6.20 & 10.47 & 9.69 &  & \textbf{2.08} & \textbf{2.15} & {3.36} &  & {3.61} & {5.31} & {7.42} \\
			WIT1       & {27.63} & 145.06 & 118.98 &  & 20.05 & 41.14 & 51.64 &  & \textbf{2.94} & \textbf{3.24} & \textbf{7.11} &  & {4.83} & {6.71} & 15.73 \\
			WIT2       & {48.07} & 286.20 & 193.67 &  & 19.22 & 39.65 & 83.36 &  & \textbf{3.14} & \textbf{3.34} & \textbf{9.84} &  & {4.91} & {6.49} & 19.69 \\
			WIT3       & {18.58} & 79.78 & 74.30 &  & 35.51 & 71.72 & 124.68 &  & \textbf{3.92} & \textbf{4.13} & \textbf{14.22} &  & {5.33} & {7.64} & 17.89 \\
			\hline
		\end{tabular}
	}
\end{table}
\par Table \ref{tab3} presents the average number of iterations (iter), average number of function evaluations (feval) and average CPU time (time ($ms$)) for each tested algorithm across various problems with line search. In both the non-accelerated and accelerated cases, the numerical results indicate that SPGMO consistently outperforms PGMO. Additionally, PGMO demonstrates poor performance on problems such as DD1, Deb, FDS, imbalance2, and WIT1-3, which exhibit imbalanced objective functions. Given the performance of SPGMO on these problems, it is well-suited for addressing such challenges. Table \ref{tab3} also shows that APGMO surpasses PGMO, validating the effectiveness of Nesterov's acceleration technique in MOPs. However, we note that Nesterov's acceleration technique slightly hampers the performance of SPGMO. We infer that the primary reasons for this may be the Barzilai-Borwein method utilized in SPGMO and the simplicity of the problems tested.
\subsection{Quadratic examples}
In the following, we compare different test problem with known smooth parameters. Consider a series of quadratic problems defined as follows:
$$f_{i}(x)=\frac{1}{2}\left\langle x,A_{i}x\right\rangle + \left\langle b_{i},x\right\rangle,\ g_{i}(x)=\frac{1}{n}\|x\|_{1},~i=1,2,$$
where $A_i\in\mathbb{R}^{n\times n}$ is a positive definite matrix. Here we use, $A_{i}=H_{i}D_{i}H_{i}^{T}$ where $H_{i}$ is a
orthogonal matrix and $D_{i}$ is a
diagonal matrix with positive components. More details of $f$ are provided in Table \ref{tab2}, where $\kappa$ and $\zeta$ denote the condition number and objective imbalances parameter, respectively. QPa is a well-conditioned problem without objective imbalances, while QPb-f exhibit objective imbalances with a parameter value of 100. Additionally, QPd and QPf display significant ill-conditioning, which poses challenges for first-order methods. For each problem, we conducted 200 computations using the same initial points across the different tested algorithms. The initial points were randomly selected within the specified lower and upper bounds. The recorded averages from the 200 runs include the number of iterations and the CPU time.
\begin{table}[h]
	\centering
	\caption{Description of quadratic problems}
	\label{tab1}
	
	\begin{tabular}{clrlclrlrl}
		\hline
		\multicolumn{1}{l}{Problem} &  & n   &           & $(\kappa,~\zeta)$ &           & \multicolumn{1}{c}{$x_{L}$} &  & \multicolumn{1}{c}{$x_{U}$} &  \\ \hline
		QPa                         &  & 10  &           & $(10,1)$ &           & 10{[}-1,...,-1{]}      &  & 10{[}1,...,1{]}        &  \\
		QPb                         &  & 10  &           & $(10,10^{2})$ &           & 10{[}-1,...,-1{]}      &  & 10{[}1,...,1{]}        &  \\
		QPc                         &  & 10 &           & $(10^{2},10^{2})$ &           & 10{[}-1,...,-1{]}     &  & 10{[}1,...,1{]}       &  \\
		QPd                         &  & 10 &           & $(10^{4},10^{2})$ &           & 10{[}-1,...,-1{]}     &  & 10{[}1,...,1{]}       &  \\
		QPe                         &  & 100 &           & $(10^{2},10^{2})$ &           & 100{[}-1,...,-1{]}     &  & 100{[}1,...,1{]}       &  \\
		QPf                         &  & 100 & \textbf{} & $(10^{3},10^{2})$ & \textbf{} & 100{[}-1,...,-1{]}     &  & 100{[}1,...,1{]}       &  \\
		\hline
	\end{tabular}
\end{table}


\begin{table}[h]
	\centering
	\caption{Number of average iterations (iter) and average CPU time (time ($ms$)) of tested algorithms implemented on different test problems {\bf with known smooth parameters}.}
	\label{tab4}
	\resizebox{.99\columnwidth}{!}{
		\begin{tabular}{llrrrrrrrrrrrrrrrrr}
			\hline
			{Problem} &
			&
			\multicolumn{2}{l}{PGMO} &
			\multicolumn{1}{l}{} &
			\multicolumn{2}{l}{APGMO} &
			\multicolumn{1}{l}{} &
			\multicolumn{2}{l}{APGMO-sc} &
			\multicolumn{1}{l}{} &
			\multicolumn{2}{l}{SPGMO} &
			\multicolumn{1}{l}{} &
			\multicolumn{2}{l}{ASPGMO} &
			\multicolumn{1}{l}{} &
			\multicolumn{2}{l}{ASPGMO-sc} \\ \cline{3-4} \cline{6-7} \cline{9-10} \cline{12-13} \cline{15-16} \cline{18-19} 
			&  & iter   & time   &  & iter   & time    &  & iter   & time    &  & iter   & time   &  & iter   & time   &  & iter   & time   \\ \hline
			QPa &  & 43.07  & 94.92  &  & 33.42  & 99.14   &  & \bf{20.56}  & 60.47   &  & 43.07  & 100.31 &  & 33.42  & 105.16 &  & \bf{20.56}  & \bf{59.53}  \\
			QPb &  & -- & -- &  & -- & --  &  & 196.03 & 477.42  &  & 48.44  & 89.21  &  & 34.30  & 82.03  &  & \bf{21.21}  & \bf{62.81}  \\
			QPc &  & -- & -- &  & -- & --  &  & -- & --  &  & 367.21 & 210.78 &  & 149.33 & 144.53 &  & \bf{68.97}  & \bf{74.22}  \\
			QPd &  & -- & -- &  & -- & --  &  & -- & --  &  & -- & --  &  & -- & --  &  & \bf{422.72} & \bf{25.70}  \\
			QPe &  & -- & --   &  & -- & -- &  & -- & -- &  & 326.31 & 368.20 &  & 186.47 & 320.54 &  & \bf{81.66}  & \bf{219.61} \\
			QPf &  &    --    &   --     &  &     --   &     --    &  &     --   &      --   &  & -- & -- &  & -- & -- &  & \bf{262.87} & \bf{351.25} \\ \hline
		\end{tabular}
	}
\end{table}

Table \ref{tab4} provides the average number of iterations (iter) and average CPU time (time ($ms$)) for each tested algorithm across the various problems with known smooth parameters, where ``--'' indicates that the tested algorithm failed to meet the stopping criteria within $500$ iterations. As noted in Table \ref{tab4}, we conclude that PGMO exhibits slow convergence on imbalanced problems in both the non-accelerated and accelerated scenarios. The subpar performance of APGMO on imbalanced problems further confirms that Nesterov's acceleration technique is inadequate for addressing objective imbalances. In comparing the performances of SPGMO and ASPGMO, we conclude that the performance of SPGMO with known smooth parameters is enhanced by the incorporation of Nesterov's acceleration technique. Remarkably, even for the highly ill-conditioned problems QPd and QPf, the ASPGMO with strongly convex momentum (ASPGMO-sc) shows commendable performance.

\begin{figure}[H]
	\centering
	\subfigure[$k_{\max}=50$]
	{
		\begin{minipage}[H]{.45\linewidth}
			\centering
			\includegraphics[scale=0.37]{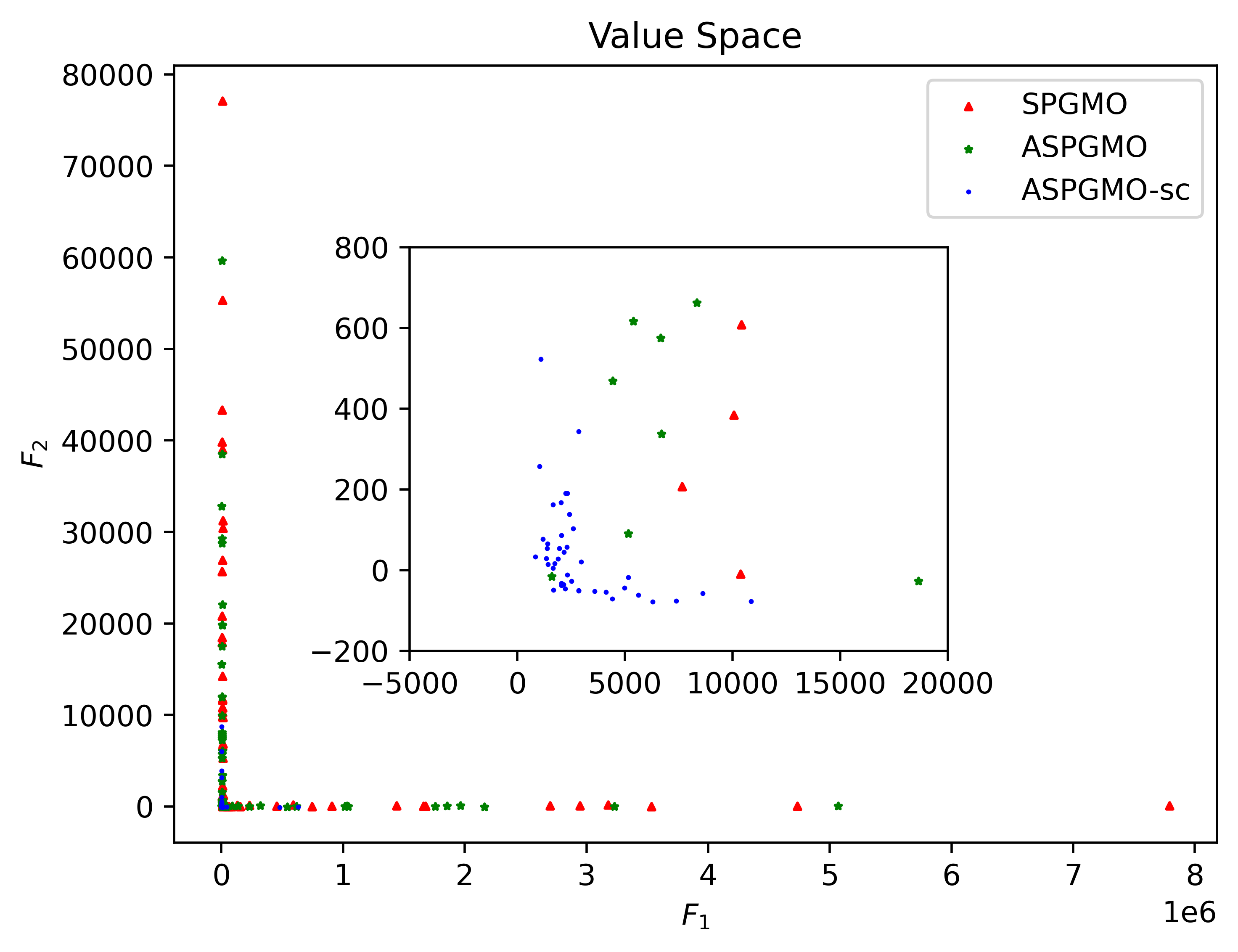} 
		\end{minipage}
	}
	\subfigure[$k_{\max}=500$]
	{
		\begin{minipage}[H]{.45\linewidth}
			\centering
			\includegraphics[scale=0.37]{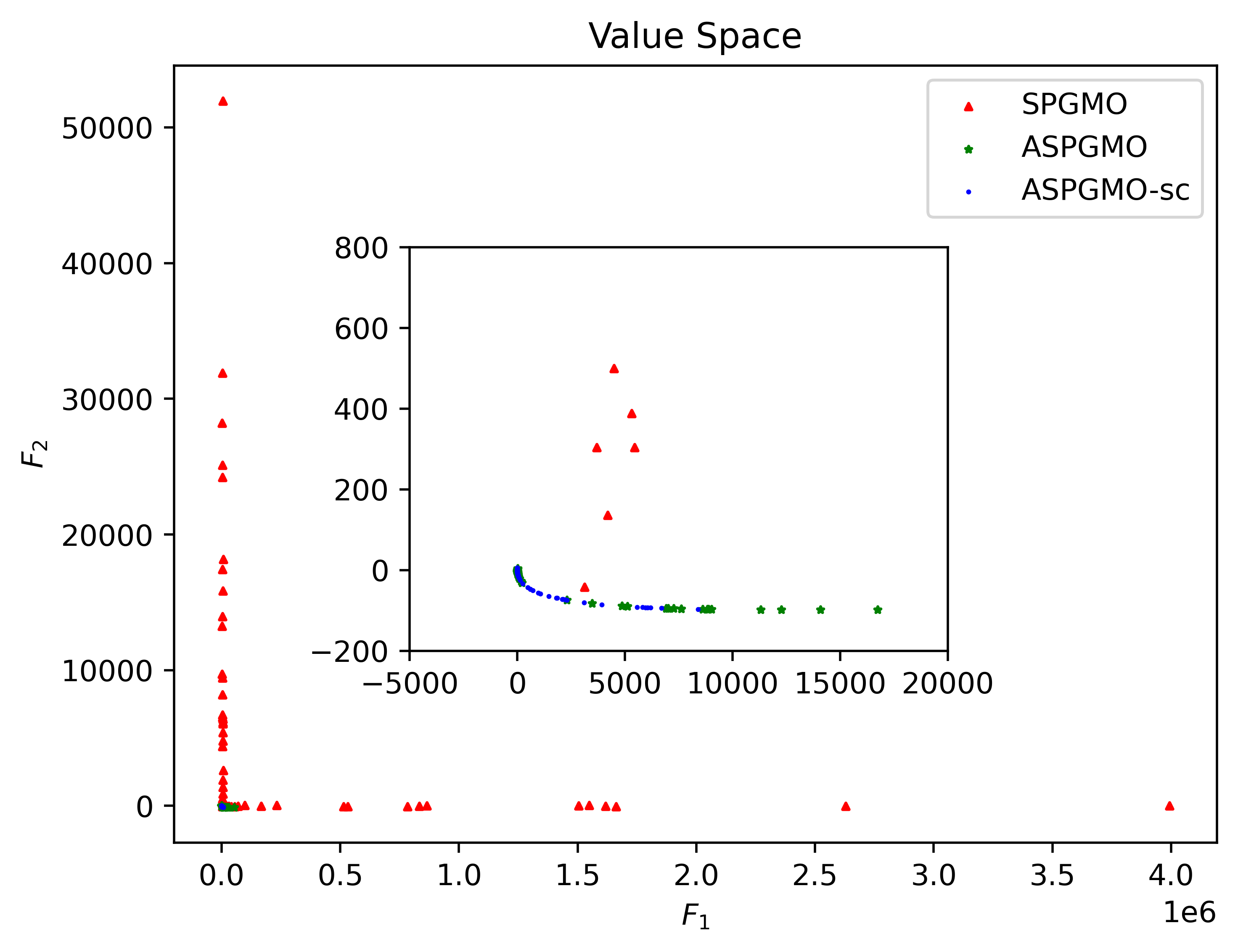} 
		\end{minipage}
	}
	
	\caption{Value space for different maximum numbers of iterations
		$k_{\max}=50,500$ for QPd.}
	\label{f0}
\end{figure}

\begin{figure}[H]
	\centering
	\subfigure[Function values of $F_{1}$]
	{
		\begin{minipage}[H]{.45\linewidth}
			\centering
			\includegraphics[scale=0.37]{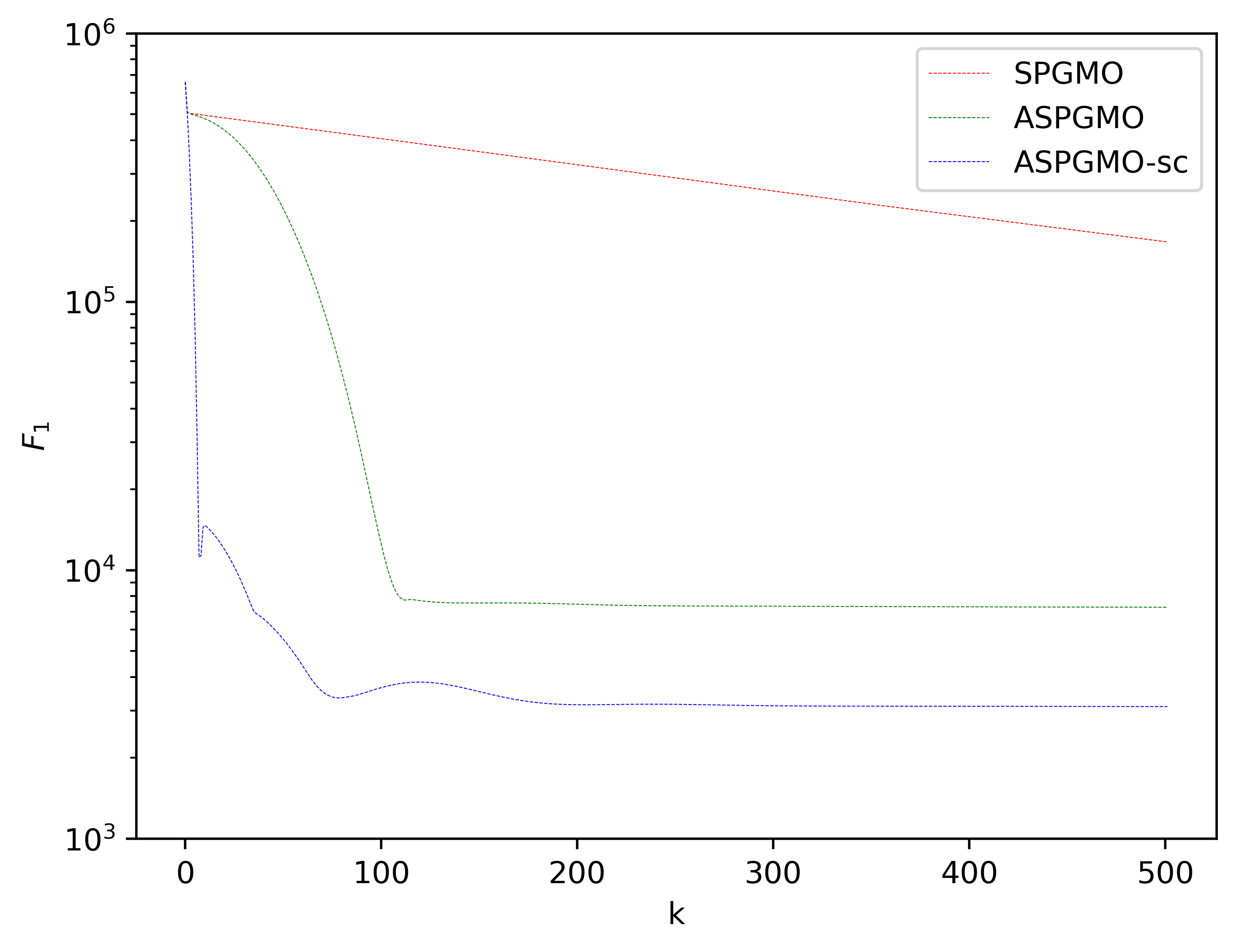} 
		\end{minipage}
	}
	\subfigure[Function values of $F_{2}$]
	{
		\begin{minipage}[H]{.45\linewidth}
			\centering
			\includegraphics[scale=0.37]{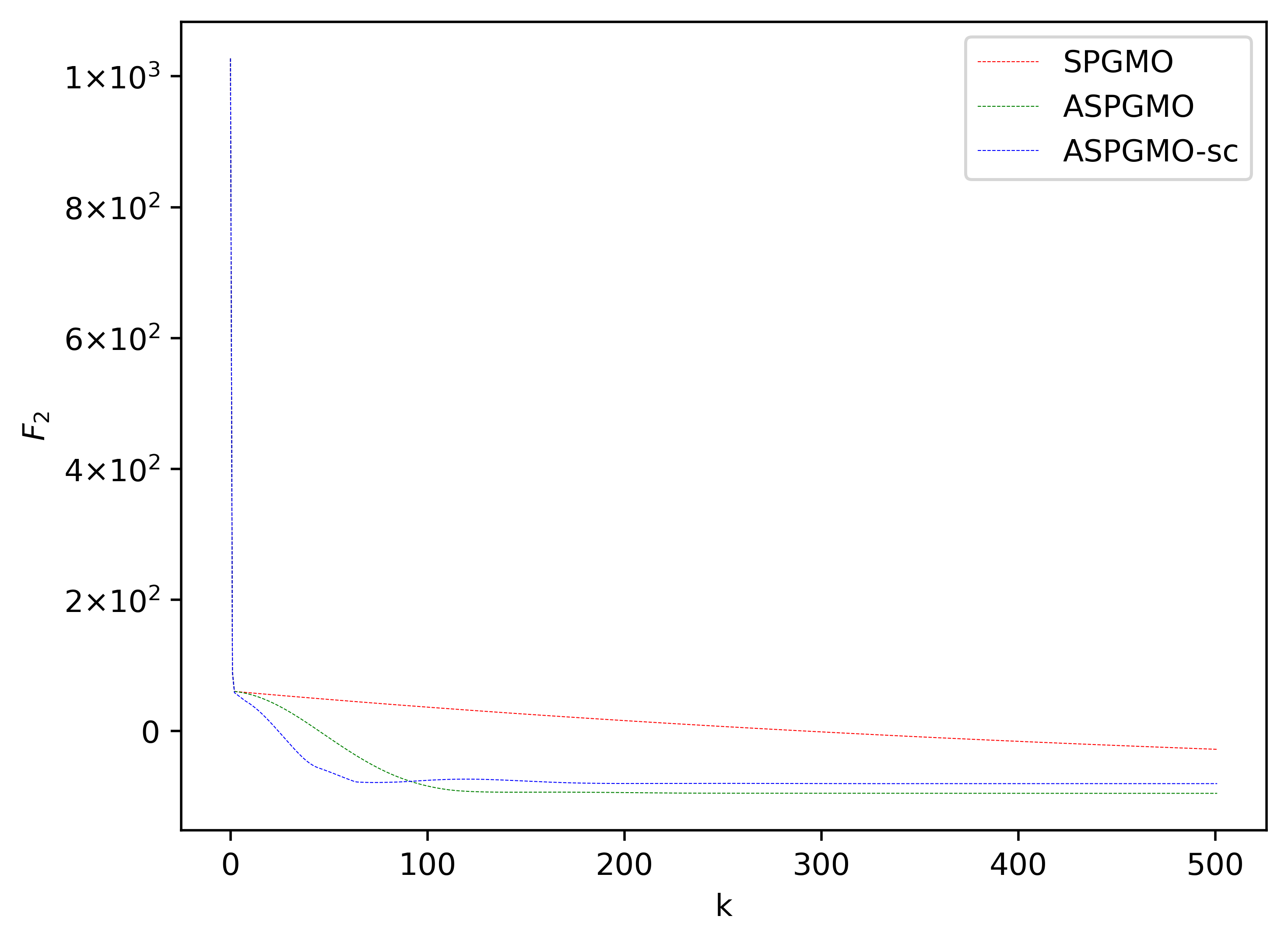} 
		\end{minipage}
	}
	\caption{Function values of iterations for QPd.}
	\label{f1}
\end{figure}

\begin{figure}[H]
	\centering
	\subfigure[Function values of $F_{1}$]
	{
		\begin{minipage}[H]{.45\linewidth}
			\centering
			\includegraphics[scale=0.37]{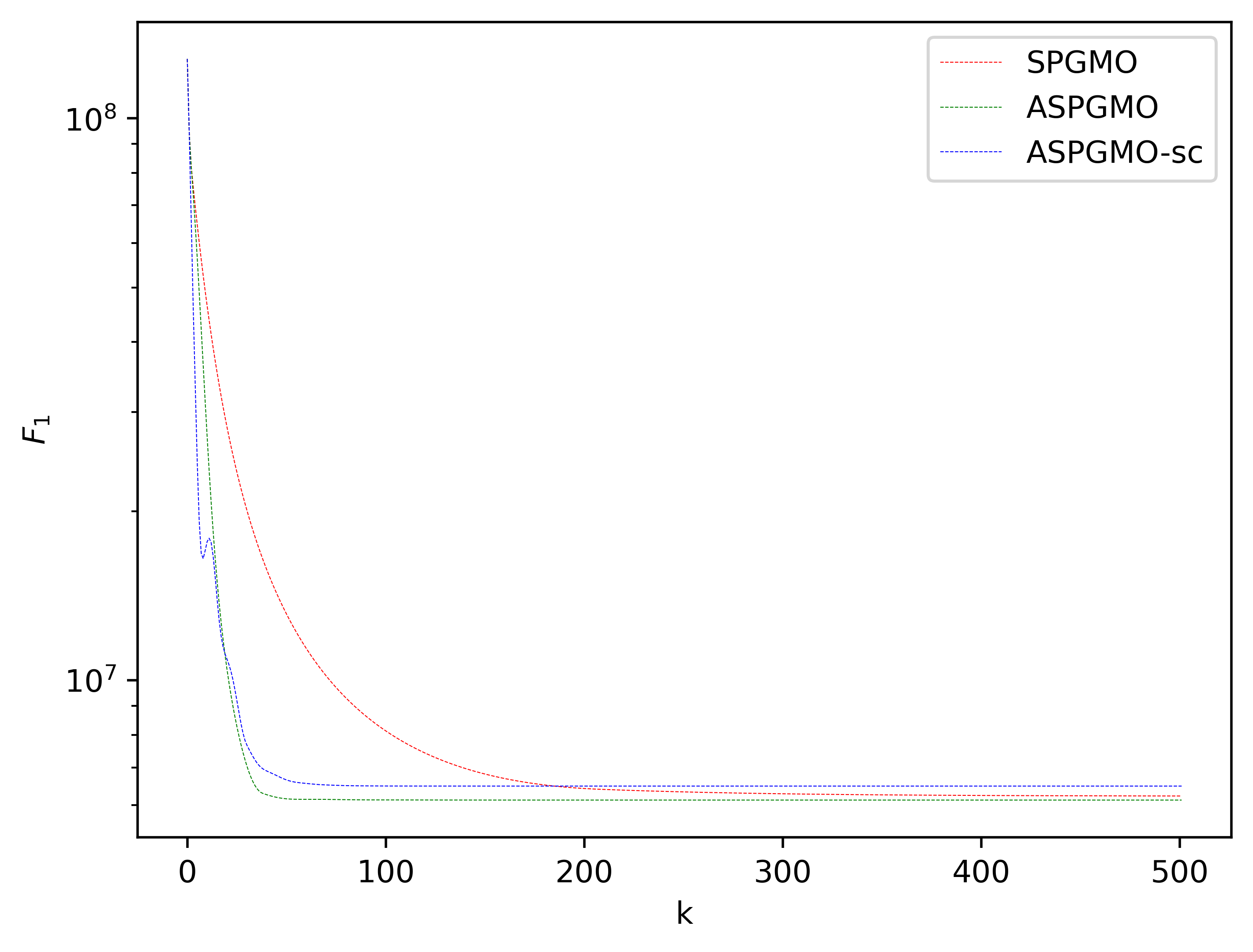} 
		\end{minipage}
	}
	\subfigure[Function values of $F_{2}$]
	{
		\begin{minipage}[H]{.45\linewidth}
			\centering
			\includegraphics[scale=0.37]{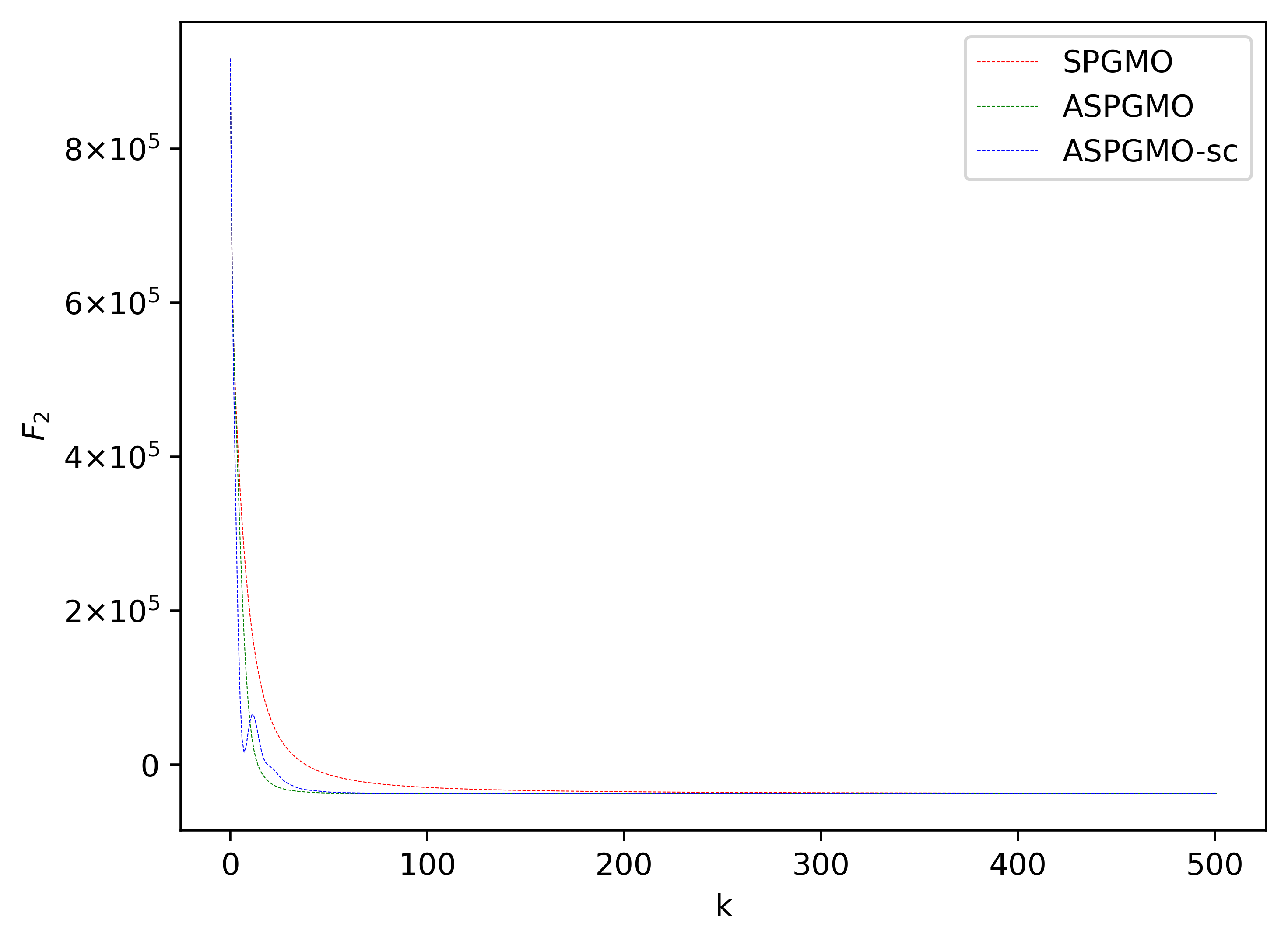} 
		\end{minipage}
	}
	\caption{Function values of iterations for QPf.}
	\label{f2}
\end{figure}

In Figure \ref{f0}, we utilize the same 50 random starting points for SPGMO, ASPGMO, and ASPGMO-sc. For these experiments, we apply different maximal numbers of iterations $k_{\max}$. As shown in Figure \ref{f0} (a), the points generated by ASPGMO-sc progress significantly faster than those produced by SPGMO and ASPGMO in the initial stages. In Figure \ref{f0} (b), we observe that the points generated by SPGMO are consistently outperformed by both ASPGMO and ASPGMO-sc. Although SPGMO and ASPGMO do not meet the stopping criteria within 500 iterations on the QPe and QPf problems, Figures \ref{f1} and \ref{f2} indicate that the function values produced by ASPGMO decrease more rapidly at the outset compared to SPGMO. Moreover, from Figures \ref{f1} and \ref{f2}, we note that the function values of ASPGMO and ASPGMO-sc exhibit a slower decrease after 100 iterations and converge to similar values. However, as illustrated in Figure \ref{f3}, both ASPGMO and ASPGMO-sc converge more rapidly than the non-accelerated SPGMO, despite some oscillations. Additionally, ASPGMO-sc demonstrates superior performance compared to ASPGMO, successfully reaching the stopping criteria within 500 iterations for the QPd and QPf problems.

\begin{figure}[t]
	\centering
	\subfigure[$\nm{x^{k+1}-y^{k}}$ for problem QPd]
	{
		\begin{minipage}[H]{.45\linewidth}
			\centering
			\includegraphics[scale=0.37]{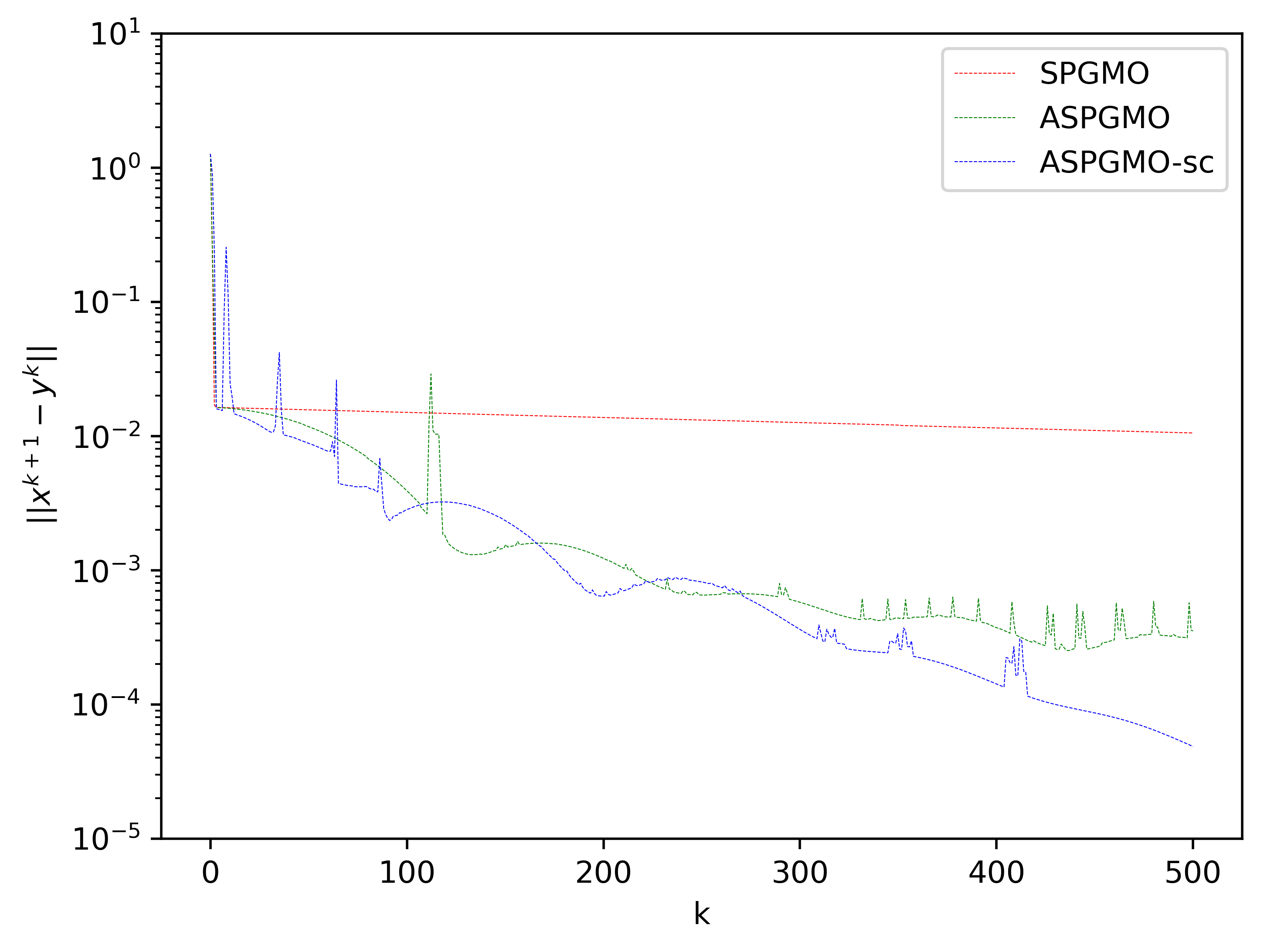} 
		\end{minipage}
	}
	\subfigure[$\nm{x^{k+1}-y^{k}}$ for problem QPf]
	{
		\begin{minipage}[H]{.45\linewidth}
			\centering
			\includegraphics[scale=0.37]{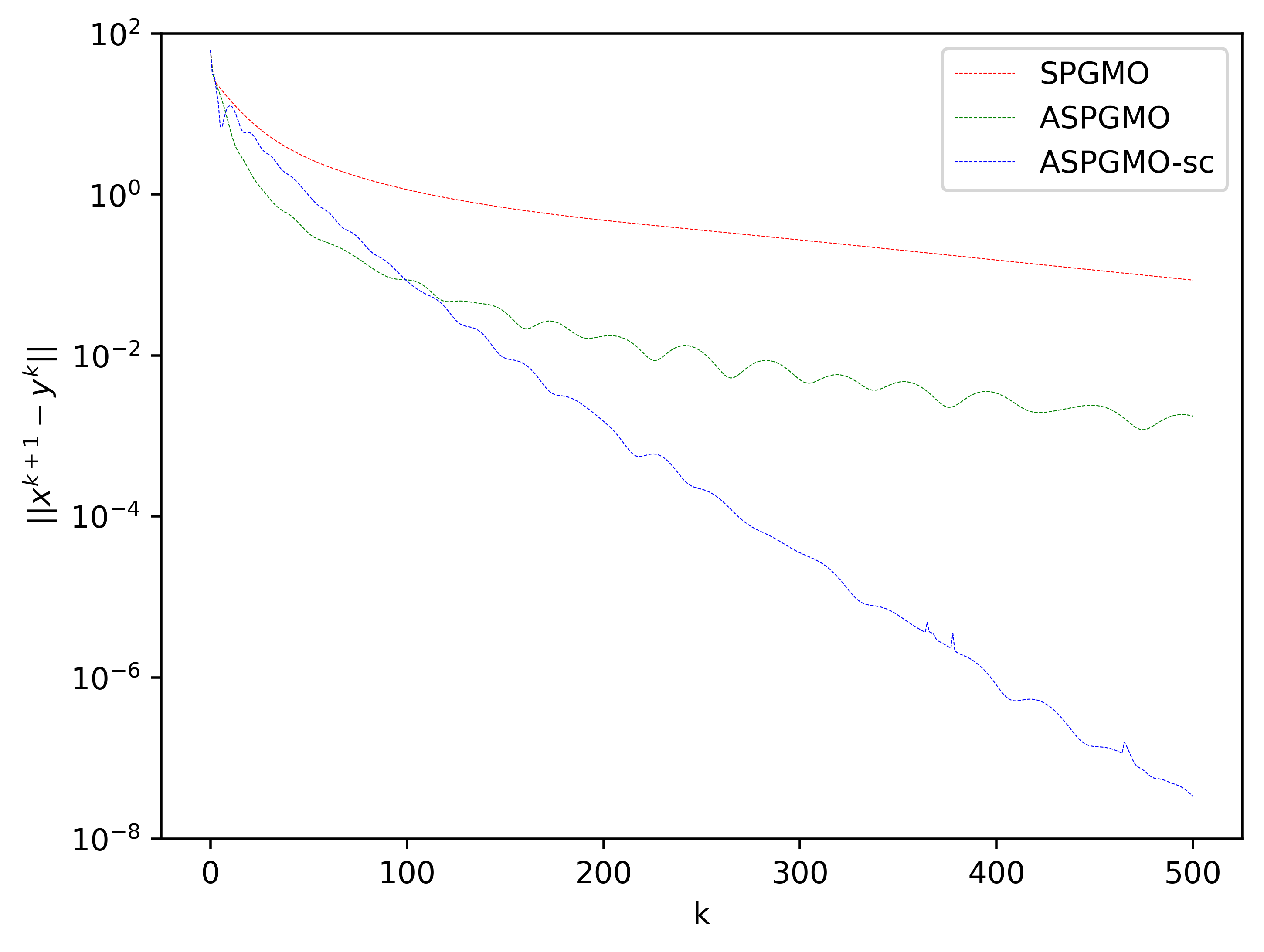} 
		\end{minipage}
	}
	\caption{$\nm{x^{k+1}-y^{k}}$ for problems QPd and QPf.}
	\label{f3}
\end{figure}

\section{Conclusions}
We develop a scaled method to mitigate objective imbalances that lead to the slow convergence of existing multiobjective first-order methods. It is proven that the SPGMO converges linearly at a rate of $\sqrt{1-\min_{i\in[m]}\left\{{\mu_{i}}/{L_{i}}\right\}}$, whereas the linear convergence rate of PGMO is $\sqrt{1-{\mu_{\min}}/{L_{\max}}}$. We also establish improved linear convergence in both linear and accelerated cases. To the best of our knowledge, the improved linear convergence is the first theoretical result that bridges the theoretical gap between first-order methods for SOPs and MOPs. Notably, scaling serves as a flexible manipulation that opens new avenues for exploring efficient descent methods in multiobjective optimization.
\par For future work, several points warrant consideration:
\begin{itemize}
	\item Objective imbalances are intrinsic to MOPs that reflect differences among objectives. This paper quantifies objective imbalances by (\ref{oimb}) for first-order methods in strongly convex cases. It would be interesting to study objective imbalances in non-convex scenarios as well as in high-order methods.
	\item The line search method described in Remark \ref{r5.6} (iii) suggests selecting appropriate scaling parameters to capture the problem's local geometry. However, line search methods often impose significant computational burdens. By extending the fully adaptive method introduced in \cite{Ada20} to determine scaling parameters, developing an adaptive SPGMO could be an interesting avenue for exploration. 
	\item Numerical results indicate that the ASPGMO performs well for ill-conditioned MOPs when the strongly convex parameters are known. However, these parameters are frequently unknown and challenging to estimate. It would be worthwhile to develop efficient ASPGMO algorithms that do not require knowledge of the strongly convex parameters. 
\end{itemize}

\bibliographystyle{abbrv}
\bibliography{references}

\begin{acknowledgements}
This work was funded by the Major Program of the National Natural Science Foundation of China [grant numbers 11991020, 11991024]; the Key Program of the National Natural Science Foundation of China [grant number 12431010], the General Program of the National Natural Science Foundation of China [grant number 12171060]; NSFC-RGC (Hong Kong) Joint Research Program [grant number 12261160365].
\end{acknowledgements}

\end{document}